\renewcommand{\ss}{\mathbf s}
\newcommand{\matrixA}{A}
\long\def\comment#1{}
\renewcommand{\dist}{\operatorname{dist}}
\theoremstyle{theorem}
\theoremstyle{definition}
\begin{document}

\title{Diophantine Approximation on Abelian Varieties; a Conjecture of M. Waldschmidt}
\authorlior
\authorlambert\authorkeith\authordavid

\maketitle

\begin{abstract}
Following the work of Waldschmidt, we investigate problems in Diophantine approximation on abelian varieties. First we show that a conjecture of Waldschmidt for a given simple abelian variety is equivalent to a well-known Diophantine condition holding for a certain matrix related to that variety. We then posit a related but weaker conjecture, and establish the upper bound direction of that conjecture in full generality. For rank 1 elliptic curves defined over a number field $K \subsetneqq \R$, we then obtain a weak-type Dirichlet theorem in this setting, establish the optimality of this statement, and prove our conjecture in this case. 
\end{abstract}

\section{Introduction}

\subsection{Background on Diophantine Approximation}\label{Intro_Dio}

In its most general form, the field of \textit{Diophantine approximation} usually seeks to quantify in a precise manner the density of some subset $\QQ$ within its closure $M$, with respect to some notion of complexity on the points of $\QQ$. Specifically, adapting language from \cite{FKMS1,FKMS2},
we consider a Diophantine tuple to consist of: (1) a complete metric space $(M,\dist)$; (2) a countable subset $\QQ \subset M$; and (3) a \textit{height function} $H: \QQ \to (0,\infty)$. When addressing questions in the field, we must specify the tuple $( M, \dist, \QQ, H)$. Given a function $\psi: (0,\infty) \to (0,\infty)$, we say that $x \in M$ is $\psi$-\textit{approximable} if there exists a constant $C_x$ and an infinite sequence $\QQ \ni r_ n \to x$ satisfying
\[
\dist(r_n, x) < C_x \psi(H(r_n)) \quad \text{ for all } n.
\]
Traditional questions in the field concern characterizing the functions $\psi$ for which (almost) all points are $\psi$-approximable, or for a fixed function $\psi$, to describe in some manner the set of points which are $\psi$-approximable.

Consider this definition concretely in the case of $(\R,|\cdot |, \Q, H )$ where $H(\frac{p}{q}) = q$. We call a function $\psi: (0,\infty) \to (0,\infty)$ a \emph{Dirichlet function} if for every $x$, there exists a constant $C_x$ such that $\left|x-\frac{p}{q}\right| < C_x \psi\left(H\left(\frac{p}{q}\right)\right)$. In this language, the corollary to Dirichlet's seminal theorem below can be restated as saying that the function $\psi(t) = \frac1{t^2}$ is a Dirichlet function for irrationals in $\R$ (and that the constant $C_x$ can be taken to be 1 for every $x$). 
\ignore{
Moreover, the function $1/t^2$ is \emph{optimal} in the sense that any function which decays faster, i.e., any $\varphi$ such that
\[
\lim_{t \to \infty} \frac{\varphi(t)}{1/t^2} = 0
\]
is not a Dirichlet function. \comdavid{Do we care about optimality in this paper? If not, why mention it?}
}

More generally, one can consider the problem of ``simultaneous approximation,'' which seeks to approximate a vector $\xx \in \R^m$ by rationals expressed in the form $\frac{\pp}{q}$, where $\pp \in \Z^m$, $q \in \N$, and $\gcd(\pp, q) = 1$. The height function in this case is generally taken to be $H\left(\pp/q\right) = q$. It is common, in this setting, to ``clear denominators'' and consider the expression $\left\| q\xx - \pp\right\|$ instead. This has led to consideration of the more general notion of \emph{Diophantine approximation on matrices}, where an $m\times n$ matrix $A$ is evaluated in terms of the quality of ``integer approximations'' $(\pp,\qq)\in \Z^m\times \Z^n$, where the quality is measured in terms of the tradeoff between the quality of approximation $\|A\qq+\pp\|$ and the height $\|\qq\|$. The first result along these lines is Dirichlet's theorem in Diophantine approximation:

\begin{theorem}[Dirichlet 1842]\label{dirthm}
Let $A$ be an $m\times n$ matrix and fix $Q \geq 1$. Then there exist $\pp\in\Z^m$ and $\qq\in\Z^n\butnot\{\0\}$ such that
\[
\|A\qq + \pp\| < Q^{-n/m}, \;\;\;\; \|\qq\| \leq Q.
\]
\end{theorem}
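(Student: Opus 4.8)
The natural plan is to run Dirichlet's own argument, namely the pigeonhole principle. Put $T=\lfloor Q\rfloor$ and look at the $(T+1)^{n}$ integer vectors $\qq\in\{0,1,\dots,T\}^{n}$. For each of them I reduce $A\qq$ modulo $\Z^{m}$ to obtain a point of the torus $\R^{m}/\Z^{m}$, identified with the half-open cube $[0,1)^{m}$. I then partition $[0,1)^{m}$ into congruent half-open sub-boxes whose side length in every coordinate is at most $Q^{-n/m}$, so that any two points in a common sub-box differ coordinatewise, in absolute value, by \emph{strictly} less than $Q^{-n/m}$. If the number of sub-boxes is strictly below $(T+1)^{n}$, the box principle supplies distinct $\qq_{1}\neq\qq_{2}$ lying in one sub-box; then $\qq=\qq_{1}-\qq_{2}$ satisfies $\qq\neq\0$ and $\|\qq\|\le T\le Q$, and with $\pp\in\Z^{m}$ the difference of the two discarded integer parts one has $\|A\qq+\pp\|<Q^{-n/m}$, which is the assertion. (Throughout I take $\|\cdot\|$ to be the sup norm, as is standard here; for another norm only the implied constant changes.)

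I expect the delicate part to be the bookkeeping that forces the number of sub-boxes below $(T+1)^{n}$ while keeping each of diameter $<Q^{-n/m}$. This is immediate when $Q^{n/m}\in\Z$ --- e.g.\ when $Q$ is a perfect $m$-th power --- for then I cut each of the $m$ torus coordinates into exactly $Q^{n/m}$ equal parts, giving exactly $(Q^{n/m})^{m}=Q^{n}<(Q+1)^{n}=(T+1)^{n}$ sub-boxes. For a general real $Q\ge1$ no partition of the cube into boxes has both few enough cells and cells of diameter $<Q^{-n/m}$, so for that case I would instead invoke Minkowski's convex body theorem.

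Namely, for arbitrary $Q\ge1$ I would apply Minkowski to
\[
C=\bigl\{(\pp,\qq)\in\R^{m}\times\R^{n}:\ \|A\qq+\pp\|\le Q^{-n/m}\ \text{and}\ \|\qq\|\le Q\bigr\},
\]
a compact, convex, origin-symmetric subset of $\R^{m+n}$. The unimodular map $(\pp,\qq)\mapsto(A\qq+\pp,\qq)$, with matrix $\bigl(\begin{smallmatrix}I_{m}&A\\0&I_{n}\end{smallmatrix}\bigr)$ of determinant $1$, sends $C$ onto the box $[-Q^{-n/m},Q^{-n/m}]^{m}\times[-Q,Q]^{n}$ of volume $(2Q^{-n/m})^{m}(2Q)^{n}=2^{m+n}$; hence $\operatorname{vol}(C)=2^{m+n}=2^{m+n}\operatorname{covol}(\Z^{m+n})$, and Minkowski's theorem in its sharp form for compact bodies yields a nonzero $(\pp,\qq)\in C\cap\Z^{m+n}$. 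When $Q>1$ a one-line check rules out $\qq=\0$: otherwise $\pp\neq\0$ with $\|\pp\|\le Q^{-n/m}<1$, forcing $\pp=\0$, a contradiction. The leftover case $Q=1$ is settled directly by taking $\qq=\mathbf e_{1}$ and $\pp$ a nearest integer vector to $-A\mathbf e_{1}$, so that $\|A\qq+\pp\|\le\tfrac12<1=Q^{-n/m}$. The one remaining wrinkle is that Minkowski delivers only the non-strict bound $\|A\qq+\pp\|\le Q^{-n/m}$; converting it to a strict inequality while retaining the \emph{closed} bound $\|\qq\|\le Q$ is the genuinely delicate point (together with the combinatorial count above), which I would handle via the pigeonhole argument, where half-open boxes make strictness automatic, or by a short limiting argument in $Q$.
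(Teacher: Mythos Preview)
The paper does not prove this theorem; it is quoted as classical background (``Dirichlet 1842'') and used only through its corollary, so there is no proof to compare against.

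Your proposal is correct and standard. The pigeonhole argument with half-open boxes and the Minkowski convex-body argument are exactly the two textbook routes, your volume computation is right, and your elimination of $\qq=\0$ is fine.

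The one point you leave hanging---getting the \emph{strict} inequality $\|A\qq+\pp\|<Q^{-n/m}$ from Minkowski while keeping $\|\qq\|\le Q$---is resolved by the classical refinement of Minkowski's linear forms theorem rather than by a limit in $Q$ (which by itself only recovers $\le$). For each $\mu>1$ apply the non-strict Minkowski theorem with the bound on the single coordinate $q_1$ enlarged to $Q\mu^{m+n-1}$ and the remaining $m+n-1$ bounds (on $q_2,\dots,q_n$ and on the $m$ rows of $A\qq+\pp$) shrunk by the factor $\mu$; the product of all $m+n$ constants is still $1$, so a nonzero integer solution exists. As $\mu\searrow 1$ these solutions lie in a fixed bounded set, hence some fixed nonzero $(\pp,\qq)$ occurs for a sequence $\mu_k\searrow 1$. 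From $|q_1|\le Q\mu_k^{m+n-1}$ for all $k$ one gets $|q_1|\le Q$; from any single $\mu_{k_0}>1$ one gets $|q_j|\le Q/\mu_{k_0}<Q$ for $j\ge 2$ and $|(A\qq+\pp)_i|\le Q^{-n/m}/\mu_{k_0}<Q^{-n/m}$ for every $i$. Thus $\|\qq\|\le Q$ and $\|A\qq+\pp\|<Q^{-n/m}$. Since the latter is $<1$ for every $Q\ge 1$, the possibility $\qq=\0$ is excluded uniformly, so your separate treatment of $Q=1$ is not needed.
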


\begin{corollary}\label{dircor}
Let $A$ be an $m\times n$ matrix. Then there exist infinitely many pairs $(\pp,\qq) \in \Z^{m+n}$ such that
\[
\|A\qq + \pp\| < \|\qq\|^{-n/m}.
\]
\end{corollary}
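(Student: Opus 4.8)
The plan is to deduce the corollary from Theorem~\ref{dirthm} by letting the parameter $Q$ tend to infinity and checking that this produces infinitely many genuinely distinct pairs. The first and only substantive observation is that any pair $(\pp,\qq)$ furnished by Theorem~\ref{dirthm} for a given $Q$ already satisfies the desired inequality: since $\qq \in \Z^n\butnot\{\0\}$ we have $\|\qq\|\geq 1$, and from $\|\qq\|\leq Q$ we get $\|\qq\|^{-n/m}\geq Q^{-n/m}$, so that
\[
\|A\qq+\pp\| < Q^{-n/m} \leq \|\qq\|^{-n/m}.
\]
Thus the entire content of the proof is to rule out the possibility that, as $Q$ varies, all of these solutions collapse onto a finite set.

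I would split into two cases according to whether the affine system $A\qq+\pp=\0$ admits a nontrivial integer solution. Suppose first that there exist $\pp_0\in\Z^m$ and $\qq_0\in\Z^n\butnot\{\0\}$ with $A\qq_0+\pp_0=\0$. Then the pairs $(k\pp_0,k\qq_0)$ for $k=1,2,3,\dots$ are infinitely many distinct elements of $\Z^{m+n}$, and each satisfies $\|A(k\qq_0)+k\pp_0\|=0<\|k\qq_0\|^{-n/m}$, so the corollary holds in this case.

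In the remaining case, $A\qq+\pp\neq\0$ for every $\qq\in\Z^n\butnot\{\0\}$ and every $\pp\in\Z^m$. I would then construct the required solutions inductively: given pairs $(\pp_1,\qq_1),\dots,(\pp_j,\qq_j)$ already produced by Theorem~\ref{dirthm}, set $\delta_j=\min_{1\leq i\leq j}\|A\qq_i+\pp_i\|$, which is strictly positive by the case hypothesis, and apply Theorem~\ref{dirthm} with a parameter $Q_{j+1}$ chosen large enough that $Q_{j+1}^{-n/m}<\delta_j$ (possible since $m,n\geq 1$, so $Q^{-n/m}\to 0$). The resulting pair $(\pp_{j+1},\qq_{j+1})$ then satisfies $\|A\qq_{j+1}+\pp_{j+1}\|<\delta_j$, hence differs from all of $(\pp_1,\qq_1),\dots,(\pp_j,\qq_j)$; and by the opening observation every pair in the sequence satisfies the inequality $\|A\qq+\pp\|<\|\qq\|^{-n/m}$. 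I do not anticipate a real obstacle here: the only point requiring care is precisely this case distinction, which is the standard subtlety in passing from Dirichlet's theorem to its corollary and ensures the constructed solutions do not all coincide.
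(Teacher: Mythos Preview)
Your argument is correct and is the standard deduction of the corollary from Dirichlet's theorem; the paper itself states Corollary~\ref{dircor} as a classical fact without proof, so there is nothing to compare against.
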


Beyond Dirichlet's theorem, we can ask whether the quality of approximation is improved by some factor of the form $\|\qq\|^\epsilon$ with $\epsilon > 0$:

\begin{definition}\label{BA_VWA}
Let $A$ be an $m\times n$ matrix. The \emph{Diophantine exponent} of $A$ is
\[
\omega(A) := \limsup_{\|\qq\|\to\infty} \frac{-\log\min_\pp \|A\qq + \pp\|}{\log\|\qq\|}
\]
Equivalently, $\omega(A)$ is the supremum of $\omega$ such that there are infinitely many $(\pp,\qq)\in\Z^{m+n}$ such that
\[
\|A\qq + \pp\| \leq \|\qq\|^{-\omega}.
\]
\end{definition}

The corollary to Dirichlet's theorem implies that $\omega(A) \geq n/m$ for all $m\times n$ matrices $A$. Such a matrix is called \emph{very well approximable} if $\omega(A) > n/m$. A matrix is called \emph{badly approximable} if there exists a constant $C$ such that 
\[
\|A\qq + \pp\| \geq C\|\qq\|^{-n/m}
\] for every $(\pp, \qq) \in \Z^{m+n}$ with $\qq \ne \textbf{0}$.

It is worth mentioning that these results about matrices don't fit nicely in the language of a Diophantine tuple introduced above. We have decided to frame the introduction this way, however, because it is the most natural way to understand the goal of this paper, which is to describe the approximation of arbitrary points on an abelian variety by rational points, with respect to a Riemannian metric and a natural height function, discussed next. 

\subsection{Background on Abelian Varieties}

As mentioned in the abstract, the goal of this paper is to study Diophantine approximation in the context of abelian varieties, and in line with the previous section, we need to specify the tuple in which we're working. Note that our description of abelian varieties is in line with our needs to describe the set as a metric space (in particular a Riemannian manifold), and hence differs in language from other popular approaches, but the description is equivalent when focusing on the real or complex locus of the variety. 

An abelian variety is a projective variety $\AA$ together with a group law $+:\AA\times\AA\to\AA$ which is a homomorphism. The operator $+$ is always abelian \cite[p.2, p.44]{Mumford} and the pair $(\AA(\C),+)$ is isomorphic as a complex Lie group to $(\C^g/\Lambda,+)$ for some $g\in\N$ and some lattice $\Lambda \subset \C^g$ \cite[p.2, p.44]{Mumford}.


Now let $K$ denote a number field with a fixed real embedding, let $\AA$ be an abelian variety defined over $K$, and let $\mathcal{A}(K)$ denote the set of points in $\mathcal{A}$ whose coordinates are in $K$. We have an inclusion 
\[
\mathcal{A}(K) \subset \mathcal{A}(\R) \subset \mathcal{A}(\C).
\]

The real locus $\mathcal{A}(\R)$ is virtually a torus of the same dimension $g$ (when viewed as a real manifold), and carries a Riemannian metric, which we denote by $\dist$. We now have three of the four pieces of our set-up: we will study the density of the set $\mathcal{A}(K)$ in its closure inside $\mathcal{A}(\R)$, with regards to the Riemannian metric. What remains is to describe the height in this situation. 

The construction of a height function in this setting goes back to N\'eron and Tate, independently, and a succinct introduction can be found in \cite[III. Section 1]{Lang_diophantine_survey}. While the full construction is well beyond the scope of this paper, the upshot is that if we choose a symmetric, invertible, ample line bundle over $\mathcal{A}$ (see \cite[Cor. 7.2]{Milne} for a proof that such a choice of line bundle is always possible), then the corresponding height function $\hat{h}$ has zeros exactly consisting of the $K$-rational torsion points, and in fact defines a positive definite quadratic form on the finite-dimensional vector space $\R \otimes \mathcal{A}(K)$ \cite[III, Section 1, Theorem 1.3]{Lang_diophantine_survey}. We view the $K$-rational points modulo torsion as a lattice in $\R \otimes \mathcal{A}(K)$. 

For a more concrete discussion, fix an embedding into some projective space $\mathbb{P}^N$, and define a preliminary height (note this is not the final form of the height function we will use in what follows) of any point $P = [x_0, x_1, ..., x_N], x_i \in K$, to be
\[
H(P) = \left(\prod_{\nu \in M_K} \max\{|x_0|_\nu, ..., |x_N|_\nu\}^{[K_\nu:\Q_\nu]}\right)^{1/[K:\Q]}.
\]
Here $M_K$ denotes the set of standard absolute values on $K$, namely those absolute values on $K$ whose restriction to $\Q$ yields one of the usual absolute values (either the archimedean absolute value $|\cdot |_\infty$ or one of the $p$-adic absolute values), and $[K:\Q]$ denotes the degree of the field extension. 

The \textit{canonical height} of $P \in \mathcal{A}(K)$ is defined to be 
\[
\hat{h}(P) = \displaystyle\lim_{n \to \infty} \frac1{n^2} \log(H(nP)).
\]
See \cite[Chapter VIII, Section 9]{Silverman} for details. 

As before, the goal is to \textit{quantify} the density of $\mathcal{A}(K)$ inside the closure, and it seems reasonable that ``how dense'' this set is should depend in some sense on how many $K$-rational points there are. 

The celebrated Mordell--Weil theorem states that the set $\mathcal{A}(K)$ is a finitely-generated abelian group, and therefore, by the Fundamental Theorem of Finitely-Generated Abelian Groups, it can be written as 
\[
\Z^r \oplus \Z_{\text{tors}},
\]
where $r$ is called the \textit{rank} of the group, and $\Z_{\text{tors}}$ is the finite subgroup consisting of the torsion points. It is natural to conjecture that the optimal Dirichlet function for this setting should depend in some way on the rank of the abelian variety. 

In \cite{Waldschmidt}, M. Waldschmidt makes the following conjecture. Let $\AA(\R)^0$ denote the identity component of $\AA(\R)$. For a real number $h$, let 
\[
\hat{\eta}_{\mathcal{A}}(h)  = \inf \left\{ \delta : \text{ for any } \zeta \in \mathcal{A}(\R)^0, \text{ there exists } \gamma \in \mathcal{A}(K) \text{ with } \hat{h}(\gamma) \leq h \text{ and } \dist(\zeta, \gamma) < \delta \right\}.
\]
\begin{conjecture}[{\cite[Conjecture 1.1]{Waldschmidt}}]\label{Wald_conj}
    Let $\mathcal{A}$ be a simple abelian variety of dimension $g$ over a number field $K$ embedded in $\R$. Denote by $r$ the rank of the Mordell--Weil group $\mathcal{A}(K)$. For any $\varepsilon > 0 $, there exists $h_0 > 0$ (which depends only on the abelian variety $\mathcal{A}$, the real number field $K$, and $\varepsilon$) such that, for any $h \geq h_0$, 
    \[
    \hat{\eta}_\mathcal{A}(h) \leq h^{-(r/2g)+ \varepsilon}.
    \]
 \end{conjecture}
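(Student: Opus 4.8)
The final statement is Waldschmidt's conjecture itself, so a proof in full generality is not to be expected; the plan is instead to \emph{reduce} it to a sharp Diophantine property of a single matrix attached to $\mathcal A$, to prove that equivalence, and then to verify the Diophantine input where that is possible.

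\textbf{Linearization.} First I would pass to the universal cover. The identity component $\mathcal A(\R)^0$ is a compact real torus of dimension $g$; fixing a basis of its period lattice identifies it with $\R^g/\Z^g$, and since any two smooth Riemannian metrics on a compact manifold are bi-Lipschitz, $\dist$ is comparable to the flat metric, so none of the exponents below are affected. By Mordell--Weil and the N\'eron--Tate theory recalled above, $\mathcal A(K)/\mathrm{tors}$ is a lattice $\Z^r$ on which $\hat h$ restricts to a positive-definite quadratic form $q$; hence $\{\gamma:\hat h(\gamma)\le h\}$ is, up to the finite torsion and to bounded multiplicative constants, indexed by those $\mathbf a\in\Z^r$ lying in a ball of radius $\asymp\sqrt h$. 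There are $\asymp h^{r/2}$ such points, which is why $h^{-r/(2g)}$ is the rate one would see if they were perfectly spread in $\R^g/\Z^g$.

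\textbf{The period matrix and the reformulation.} Choose a $\Z$-basis of the finite-index subgroup $\bigl(\mathcal A(K)\cap\mathcal A(\R)^0\bigr)/\mathrm{tors}$, lift the images of its elements under the uniformization to vectors $\mathbf w_1,\dots,\mathbf w_r\in\R^g$ (abelian logarithms of rational points), and let $W$ be the $g\times r$ matrix with these columns. Then a rational point $\sum a_i P_i$ of height at most $h$ sits in the torus at $W\mathbf a\bmod\Z^g$ with $q(\mathbf a)\le h$. That this set of points is dense at all --- so that the problem is well posed --- already uses simplicity: combined with W\"ustholz's analytic subgroup theorem, simplicity forbids unexpected $\Q$-linear relations among the coordinates of the $\mathbf w_i$ and the period lattice, forcing $\overline{\mathcal A(K)}=\mathcal A(\R)^0$. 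Conjecture~\ref{Wald_conj} then becomes exactly: for every $\zeta\in\R^g$ and every large $h$ there exist $\mathbf a\in\Z^r$ and $\mathbf p\in\Z^g$ with $\|\mathbf a\|\lesssim\sqrt h$ and $\|W\mathbf a+\mathbf p-\zeta\|\le h^{-r/(2g)+\varepsilon}$; setting $Q=\sqrt h$, this is precisely a uniform, inhomogeneous Dirichlet-type property for the matrix $W$ with the generic exponent $r/g$, tolerating an $\varepsilon$ loss.

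\textbf{Diophantine input, and the obstacle.} The next step is to recognize this as a classical property. By a transference argument --- Minkowski's second theorem applied to an auxiliary $(g+r)$-dimensional lattice built from $W$ and to its dual, in the spirit of Khintchine's transference principle --- the uniform inhomogeneous property with an $\varepsilon$ loss holds for all $\zeta$ as soon as $W$ is \emph{not very well approximable}, i.e.\ $\omega(W)$ takes its minimal value $r/g$ of Definition~\ref{BA_VWA}; conversely, when $\omega(W)>r/g$ one produces, for a suitable sequence of heights and targets, rational points escaping the required neighbourhood. So the conjecture for $\mathcal A$ should be \emph{equivalent} to ``$W$ is not very well approximable'' (and, if one asks for the clean rate $\hat\eta_{\mathcal A}(h)\le C h^{-r/(2g)}$ with no $\varepsilon$, to ``$W$ is badly approximable''). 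The genuinely hard point --- and where I expect the argument to stop in general --- is this last one: the entries of $W$ are transcendental (abelian logarithms of $K$-rational points over real periods), and proving that such a matrix is not very well approximable is beyond current transcendence methods once $g\ge2$ or $r\ge2$. The one tractable case is $g=r=1$: there $W$ is a single real number $\alpha$, ``not very well approximable'' means just that $\alpha$ has irrationality exponent $2$ --- which, for elliptic curves over a real number field, follows from known lower bounds for linear forms in elliptic logarithms --- and then the covering estimate is an elementary consequence of the three-distance theorem. I would therefore expect the reduction and the transference equivalence to go through for all simple $\mathcal A$, with the loop closed (the Diophantine hypothesis verified) only in the rank-one elliptic case.
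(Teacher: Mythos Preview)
The statement is a conjecture, and the paper does not prove it; what the paper proves is the \emph{reduction} you outline, namely Theorem~\ref{theorem_main}: Conjecture~\ref{Wald_conj} for a given $(\mathcal A,K)$ holds if and only if the matrix $J^{-1}H$ built from periods and abelian logarithms is not very well approximable. Your linearization step is exactly Lemma~\ref{Wald_equiv} (quoted from Waldschmidt), your matrix $W$ is the paper's $J^{-1}H$, and your transference step is the paper's Theorem~\ref{VWA_equiv}. On the level of the reduction, you and the paper agree.

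Two points in your write-up go beyond what is established. First, you assert that simplicity together with W\"ustholz's analytic subgroup theorem forces $\overline{\mathcal A(K)}=\mathcal A(\R)^0$. The paper is explicit that this density is \emph{not} known in general: it is recorded only under the numerical condition $r\ge g^2-g+1$ (see the discussion after Conjecture~\ref{Conj_FLMS}). Density is in fact a \emph{consequence} of the conjecture, not an available input, so you should not invoke it as part of the set-up.

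Second, you claim that for $g=r=1$ the loop closes because known lower bounds for linear forms in elliptic logarithms give irrationality exponent~$2$ for $\alpha=\theta/\omega$. The paper does not make or use this claim, and current bounds of David/Masser--W\"ustholz type are far too weak to yield $\omega(\alpha)=1$. What the paper actually proves for rank-$1$ elliptic curves is the weaker Conjecture~\ref{Conj_FLMS} (Theorem~\ref{weakDT}): $\sigma_{E,K}=\tfrac12$, obtained from Minkowski's inhomogeneous theorem for the lower bound and from \cite{BHKV} (via Lemma~\ref{exp_UB}) for the upper bound, with no input about the Diophantine exponent of $\theta/\omega$. So even in the rank-$1$ elliptic case, Waldschmidt's conjecture itself remains open in the paper, and your proposed closing step is not justified.
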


 In line with the distinction between Theorem \ref{dirthm} and Corollary \ref{dircor}, we posit a related conjecture.
Given $\kappa > 0$, we say that a point $P \in \mathcal{A}(\R)$ is $\kappa$-approximable if there exists a constant $C$ and infinitely-many $Q \in \AA(K)$ such that 
\[
\dist(P,Q) \leq C\left(\hat{h}(Q)\right)^{-\kappa}.
\]
Analogously to the case of matrices discussed above, we define the Diophantine exponent of $P \in \mathcal{A}(\R)^0$ to be 
\[
\sigma(P) := \limsup_{\hat{h}(Q) \to \infty} \frac{-\log \dist(P,Q)}{\log \hat{h}(Q)} \geq \kappa,
\]
and we define the \textit{Diophantine exponent} of the abelian variety to be 
\[
\sigma_{\mathcal{A},K} := \inf\{\sigma(P): P \in \mathcal{A}(\R)^0 \setminus \mathcal{A}(K)\}.
\]

We can now state the main result of this paper, and a related conjecture.

\begin{theorem}\label{theorem_main}
    Waldschmidt's conjecture \ref{Wald_conj} is equivalent to the statement that a certain matrix (determined by the abelain variety $\mathcal{A}$ and the number field $K$, and defined below in Section \ref{S:proof}), is not very well approximable in the sense of Definition \ref{BA_VWA}.
\end{theorem}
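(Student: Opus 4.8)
The plan is to recast Conjecture~\ref{Wald_conj} as a statement of \emph{uniform} Diophantine approximation for an explicit matrix attached to $\mathcal{A}$ and $K$, and then to pass, by transference, to the \emph{homogeneous} approximation properties of (essentially) its transpose --- which is what ``very well approximable'' measures. Concretely: fix a real-linear identification of $\mathcal{A}(\R)^0$ with the torus $\R^g/\Z^g$; since $\mathcal{A}(\R)$ is compact, the Riemannian metric $\dist$ is bi-Lipschitz equivalent to the flat one, so all exponents below are insensitive to which we use. By Mordell--Weil and the N\'eron--Tate theory recalled above, $\mathcal{A}(K)$ modulo torsion is a lattice $\cong\Z^r$ on which $\hat{h}$ restricts to a positive-definite quadratic form, so $\hat{h}(Q)\asymp\|\mathbf{n}\|^2$ where $\mathbf{n}\in\Z^r$ is the class of $Q$; and composing the inclusion $\mathcal{A}(K)/(\text{torsion})\hookrightarrow\mathcal{A}(\R)^0$ with our identification yields the homomorphism $\mathbf{n}\mapsto\matrix\mathbf{n}\bmod\Z^g$ for a real $g\times r$ matrix $\matrix$ whose columns are the abelian logarithms of a $\Z$-basis of $\mathcal{A}(K)/(\text{torsion})$ (i.e., lifts to $\R^g$ of their images in $\R^g/\Z^g$). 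This matrix is well defined up to $\mathrm{GL}_g(\Z)\times\mathrm{GL}_r(\Z)$-equivalence and translation by $\Z^{g\times r}$, and --- by Khintchine's transference inequality --- it is very well approximable if and only if its transpose is; it is the matrix singled out in Section~\ref{S:proof}. Writing $\numberq:=\sqrt{h}$, one checks directly that, up to bounded multiplicative constants,
\[
\hat{\eta}_{\mathcal{A}}(h)\ \asymp\ \mu(\numberq)\ :=\ \sup_{\zeta\in\R^g/\Z^g}\ \min_{\mathbf{n}\in\Z^r,\ \|\mathbf{n}\|\le\numberq}\ \dist\!\bigl(\zeta,\ \matrix\mathbf{n}\bmod\Z^g\bigr),
\]
the covering radius of the $\asymp\numberq^{r}$ orbit points with $\|\mathbf{n}\|\le\numberq$. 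Hence Conjecture~\ref{Wald_conj} for $\mathcal{A}$ becomes: \emph{for every $\varepsilon>0$ one has $\mu(\numberq)\le\numberq^{-r/g+\varepsilon}$ for all large $\numberq$.}

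\textbf{Transference.} I would then prove that, up to multiplicative constants and arbitrarily small powers of $\numberq$,
\[
\mu(\numberq)^{-1}\ \asymp\ \kappa(\numberq)\ :=\ \min\bigl\{\|\mathbf{k}\|:\ \mathbf{k}\in\Z^g\setminus\{\mathbf 0\},\ \dist(\matrix^{\mathsf{T}}\mathbf{k},\,\Z^r)\lesssim 1/\numberq\bigr\}.
\]
The inequality $\mu(\numberq)\gtrsim1/\kappa(\numberq)$ is elementary: if $\matrix^{\mathsf{T}}\mathbf{k}$ lies within $O(1/\numberq)$ of $\Z^r$, every orbit point $\matrix\mathbf{n}\bmod\Z^g$ with $\|\mathbf{n}\|\le\numberq$ sits in a thin slab perpendicular to $\mathbf{k}$, leaving a tube of width $\gtrsim1/\|\mathbf{k}\|$ uncovered. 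For the reverse --- the quantitative Kronecker direction --- if a ball of radius $\mu$ in $\R^g/\Z^g$ met no orbit point with $\|\mathbf{n}\|\le\numberq$, I would expand a smooth bump supported on that ball in Fourier series, sum it against a smooth weight on $\{\|\mathbf{n}\|\le\numberq\}$, and apply Poisson summation --- equivalently, invoke a Mahler--Cassels transference inequality for the unimodular lattice spanned by the vectors $(\matrix\mathbf{n}+\mathbf{p},\mathbf{n})$ --- to force a nonzero $\mathbf{k}$ with $\|\mathbf{k}\|\lesssim1/\mu$ and $\dist(\matrix^{\mathsf{T}}\mathbf{k},\Z^r)\le\numberq^{-1+o(1)}$.

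\textbf{Conclusion and main obstacle.} Dirichlet's theorem (Theorem~\ref{dirthm}) applied to the $r\times g$ matrix $\matrix^{\mathsf{T}}$, whose Dirichlet exponent is $g/r$, gives $\kappa(\numberq)\lesssim\numberq^{r/g}$ and hence the unconditional lower bound $\mu(\numberq)\gtrsim\numberq^{-r/g}$, consistent with Conjecture~\ref{Wald_conj}. Conversely, the failure ``$\mu(\numberq)>\numberq^{-r/g+\delta}$ for infinitely many $\numberq$'' (some fixed $\delta>0$) is, through the displayed equivalence, the same as ``$\kappa(\numberq)<\numberq^{r/g-\delta}$ infinitely often'', which unwinds to $\dist(\matrix^{\mathsf{T}}\mathbf{k},\Z^r)\le\|\mathbf{k}\|^{-g/r-\delta'}$ for infinitely many $\mathbf{k}$ and some $\delta'>0$ --- i.e.\ to $\omega(\matrix^{\mathsf{T}})>g/r$: the matrix $\matrix^{\mathsf{T}}$, equivalently $\matrix$, is very well approximable in the sense of Definition~\ref{BA_VWA}. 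Negating gives the equivalence asserted in Theorem~\ref{theorem_main}. (Note that ``not very well approximable'' already rules out an integral linear relation among the abelian logarithms, so on that side the orbit is automatically dense and $\hat{\eta}_{\mathcal{A}}(h)\to0$; conversely, when $\mathcal{A}(K)$ is confined to a proper abelian subvariety the matrix acquires such a relation and becomes very well approximable, matching the failure of the conjectured bound there --- so simplicity of $\mathcal{A}$ enters only in making $r/2g$ the expected exponent, not in the equivalence.) The crux is the upper bound of the transference step: one must show that the \emph{only} obstruction to covering $\R^g/\Z^g$ near the optimal scale $\numberq^{-r/g}$ is a single short dual vector, and control it with enough precision to fix the exponent up to an arbitrary $\varepsilon$. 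The remaining ingredients --- the bi-Lipschitz comparison of metrics, the comparison of the N\'eron--Tate form with the Euclidean norm, and the $\mathrm{GL}\times\mathrm{GL}$- and transpose-invariance that make ``the matrix determined by $\mathcal{A}$ and $K$'' well defined --- are routine but must be recorded for the statement to have content.
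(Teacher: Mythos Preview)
Your reduction --- identifying $\mathcal{A}(\R)^0$ with $\R^g/\Z^g$, writing the $K$-points as $\mathbf n\mapsto\matrix\mathbf n\bmod\Z^g$, comparing $\hat h$ to $\|\mathbf n\|^2$, and rewriting Conjecture~\ref{Wald_conj} as the uniform covering bound $\mu(T)\le T^{-r/g+\varepsilon}$ --- is exactly the paper's first move (their Lemma~\ref{Wald_equiv}, taken from Waldschmidt, plus the rewriting at the start of \S\ref{S:proof}). The divergence is in how the inhomogeneous/uniform statement is tied to the homogeneous VWA condition. The paper isolates this as a general matrix fact (Theorem~\ref{VWA_equiv}) and proves it through homogeneous dynamics: the direction ``not VWA $\Rightarrow$ uniform inhomogeneous bound'' runs the Dani correspondence on $(g_t\Lambda_A)^*$ together with Mahler's inequality $\lambda_{m+n}(\Lambda)\asymp 1/\lambda_1(\Lambda^*)$ to bound the codiameter of $g_t\Lambda_A$; the converse is proved by showing that if $A^T$ is $\sigma$-VWA then the set of shifts $\boldsymbol\gamma$ defeating the bound is \emph{hyperplane absolute winning}, hence of full Hausdorff dimension. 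Your route through the dual quantity $\kappa(T)$ and a Fourier/Poisson (or Mahler--Cassels) argument is a legitimate and more classical alternative; once the two-sided estimate $\mu(T)^{-1}\asymp_\varepsilon\kappa(T)$ is nailed down with the requisite $T^\varepsilon$ precision, it yields the same equivalence without the Schmidt-game machinery. What you forgo is the extra structural information about the exceptional set of $\boldsymbol\gamma$: the paper gets HAW (and notes that Moshchevitin--Neckrasov get full measure by yet another method), whereas your argument only needs, and only produces, a single bad $\boldsymbol\gamma$. Your aside that simplicity of $\mathcal A$ plays no role in the equivalence itself is correct and is a point the paper does not make explicit.
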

 \begin{conjecture}\label{Conj_FLMS}
     Let $\mathcal{A}$ be a simple abelian variety of dimension $g$, $K$ a number field with a fixed real embedding, and let $r$ denote the rank of the Mordell--Weil group $\mathcal{A}(K)$. Then 
     \[
     \sigma_{\mathcal{A},K} = \frac{r}{2g}.
     \]
 \end{conjecture}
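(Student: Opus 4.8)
The plan is to translate Conjecture \ref{Conj_FLMS} into a question of inhomogeneous Diophantine approximation by the matrix attached to $(\mathcal{A},K)$ in Theorem \ref{theorem_main}, prove the ``upper'' half of the equality in general, and resolve the full statement for rank $1$ elliptic curves by classical means. First I would use the structure recalled above to identify $\mathcal{A}(\R)^0$ with a real torus $\R^g/\Z^g$ and $\mathcal{A}(K)$ modulo torsion with a rank-$r$ lattice, itself identified with $\Z^r$, on which $\hat{h}$ is comparable to the square of the Euclidean norm, so that $\hat{h}(Q)\asymp\|\qq\|^2$ when $Q$ corresponds to $\qq\in\Z^r$. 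The inclusion $\mathcal{A}(K)\hookrightarrow\mathcal{A}(\R)^0$ then descends to a homomorphism $\Z^r\to\R^g/\Z^g$ which is the reduction modulo $\Z^g$ of a $g\times r$ real matrix $\matrix$, the matrix of Theorem \ref{theorem_main}. Consequently, if $P$ corresponds to $\zeta\in\R^g/\Z^g$ and $Q$ to $\qq\in\Z^r$, then $\dist(P,Q)\asymp\min_{\pp\in\Z^g}\|\matrix\qq+\pp-\zeta\|$, hence
\[
\sigma(P)=\tfrac12\,\omega^*(\matrix;\zeta),\qquad
\omega^*(\matrix;\zeta):=\limsup_{\|\qq\|\to\infty}\frac{-\log\min_{\pp\in\Z^g}\|\matrix\qq+\pp-\zeta\|}{\log\|\qq\|},
\]
and Conjecture \ref{Conj_FLMS} becomes the assertion that $\inf\{\omega^*(\matrix;\zeta):\zeta\notin\matrix\Z^r+\Z^g\}=r/g$.

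For the inequality $\sigma_{\mathcal{A},K}\le r/(2g)$ it suffices to exhibit, for each $\varepsilon>0$, one non-$K$-rational point $P$ with $\sigma(P)\le r/(2g)+\varepsilon$, and in fact almost every $P$ works: for fixed $\qq$ the set of $\zeta$ within $\|\qq\|^{-r/g-\varepsilon}$ of $\matrix\qq+\Z^g$ has measure $\ll\|\qq\|^{-r-\varepsilon g}$, and $\sum_{\qq\in\Z^r\setminus\{\0\}}\|\qq\|^{-r-\varepsilon g}<\infty$, so by the convergence Borel--Cantelli lemma almost every $\zeta$ satisfies that inequality for only finitely many $\qq$, whence $\omega^*(\matrix;\zeta)\le r/g$, i.e.\ $\sigma(P)\le r/(2g)$. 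As $\mathcal{A}(K)$ is countable, almost all such $\zeta$ come from points outside $\mathcal{A}(K)$; this argument uses nothing special about $\mathcal{A}$ and establishes the upper-bound direction of the conjecture in full generality.

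It remains to prove $\omega^*(\matrix;\zeta)\ge r/g$ for \emph{every} $\zeta\notin\matrix\Z^r+\Z^g$, an inhomogeneous Dirichlet--Minkowski statement to the effect that no point is approximated worse than a generic one. For a rank $1$ elliptic curve $g=r=1$ and $\matrix$ is a single $\alpha\in\R/\Z$, which is irrational because the Mordell--Weil generator is non-torsion; the classical inhomogeneous approximation theorem of Minkowski then gives $\|n\alpha-\zeta\|<c/|n|$ for infinitely many $n$, so $\omega^*(\alpha;\zeta)\ge1$, and combined with the previous paragraph this yields $\sigma_{\mathcal{A},K}=\tfrac12=r/(2g)$, proving the conjecture in this case (and, with a point-dependent constant, the advertised weak-type Dirichlet theorem and its optimality). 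The hard part is the general case: the bound $\omega^*(\matrix;\zeta)\ge r/g$ can \emph{fail} for arbitrary $g\times r$ matrices $\matrix$ when $\zeta$ sits in a persistent ``hole'' of the orbit $\matrix\Z^r+\Z^g$, so the proof must use that $\mathcal{A}$ is simple to rule out the corresponding Diophantine degeneracies of $\matrix$ --- a property closely tied to the ``not very well approximable'' condition of Theorem \ref{theorem_main} and approachable via an inhomogeneous-to-homogeneous transference. Making this step unconditional for all simple abelian varieties is exactly what I do not expect to be able to do, which is why the equality is recorded as a conjecture rather than proved.
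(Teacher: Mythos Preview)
Your proposal is correct and matches the paper's overall strategy: translate approximation on $\mathcal{A}(\R)^0$ by $\mathcal{A}(K)$ into inhomogeneous approximation by the matrix $\matrix$ (the paper's $J^{-1}H$), prove the upper bound $\sigma_{\mathcal{A},K}\le r/(2g)$ in general, and settle the rank $1$ elliptic curve case via Minkowski's inhomogeneous theorem, while leaving the general lower bound open. The rank $1$ argument is identical to the paper's. The one genuine difference is in the upper bound: you produce a point with $\sigma(P)\le r/(2g)$ by a first-moment Borel--Cantelli argument (almost every $\zeta$ works), whereas the paper argues by contradiction and invokes the result of Bugeaud--Harrap--Kristensen--Velani that the set of inhomogeneously badly approximable targets has full Hausdorff dimension, hence is nonempty. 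Your route is more elementary and entirely sufficient for the infimum statement; the paper's route imports a stronger input but in return tells you that the set of ``witnessing'' points $P$ is large in a dimensional sense, not just of full measure.
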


A few comments about these conjectures are in order. Firstly, the reason for the restriction to the set $\mathcal{A}(\R)^0$ is a necessary one, as it may be that even if $r \geq 1$, the closure of $\mathcal{A}(K)$ may not coincide with $\mathcal{A}(\R)$. For example, consider the elliptic curve $y^2 = x^3 -12x - 1$, with \href{https://www.lmfdb.org/EllipticCurve/Q/110160/cd/1}{LMFDB label 110160.cd1}, considered as a curve defined over $\Q$.  This curve is rank 1 and torsion-free (i.e. $\mathcal{A}(\Q)\cong \Z$), with a possible generator of the Mordell--Weil group given by (5,8).  Because the discriminant of this curve is positive, the real locus of the curve has two connected components, and $\mathcal{A}(\R) \setminus \mathcal{A}(\R)^0$ contains no rational points. Therefore, in this case, 
\[
\overline{\mathcal{A}(\Q)} = \mathcal{A}(\R)^0 \subsetneqq \mathcal{A}(\R)
\]
The first equality in this case follows directly from our Theorem \ref{weakDT} below. 

Secondly, it's not known whether the closure of $\mathcal{A}(K)$ always includes $\mathcal{A}(\R)^0$. This result is known when the inequality $r \geq g^2-g+1$ is satisfied (\cite{Waldschmidt}), which includes all eliptic curves of rank at least 1 (since $g=1$ in this case). Both conjectures \ref{Wald_conj} and \ref{Conj_FLMS} would imply that $\mathcal{A}(\R)^0 \subseteq \overline{\mathcal{A}(K)}$. One could alternatively define $\sigma_{\mathcal{A},K}$ to only consider those points $P \in \overline{\mathcal{A}(K)}$, but we have opted for our formulation because of the relationship with conjecture \ref{Wald_conj}, and to establish a nontrivial upper bound in Lemma \ref{UB} below. In the event that $\mathcal{A}(\R)^0  \not\subset \overline{\mathcal{A}(K)}$, we simply have $\sigma_{\mathcal{A},K} = 0$ by definition. 

\ignore{
\comkeith{We need more historical development}
The study of elliptic curves is a rich field with numerous and surprising connections to many diverse areas of mathematics. One of the oldest aspects of this theory is the study of the rational points on the curve, that is, points $(x,y)$ lying on an elliptic curve $$y^2 = ax^3 + bx^2 + cx + d, \quad a,b,c,d \in \Q,$$ both of whose coordinates are rational. 

Let $E$ be an elliptic curve defined over $\Q$ and $E(\Q)$ denote its rational points. It is a classical fact that the set of real points $E(\R)$ admits a commutative group law,  which restricts to a group law on $E(\Q)$. One of the most significant theorems of the twentieth century, the Mordell--Weil Theorem, states that $$E(\Q) \text{ is a finitely generated abelian group}\footnote{In fact this theorem holds for any number field $K$}.$$ From the Fundamental Theorem of Finitely Generated Abelian Groups, we therefore know that $$E(\Q) \cong \Z^r \oplus \Z_{\text{tors}}.$$  Where the exponent $r$ is called the \emph{rank} of the elliptic curve. 

The rational points on an elliptic curve are dense if and only if its rank $r$ is greater than or equal to 1. Our aim in this paper is to {\it quantify} that density, in a sense analogous to the following result of Dirichlet:
$$\text{For every $x \notin \Q$, there exist infinitely many $\frac{p}{q} \in \Q$ such that $\left|x-\frac{p}{q}\right| < \frac1{q^2}$}.$$ 
}

The notion of equivalence in the setting of abelian varieties defined over $K$ is that of an \textit{isogeny}, which is a surjective map with finite kernel, defined over the number field $K$. Morever, every isogeny $\mathcal{A}_1 \to \mathcal{A}_2$ comes paired with a dual isogeny in the reverse direction, $\mathcal{A}_2 \to \mathcal{A}_1$ (c.f. \cite[Ch. 2, Section 8, p. 80]{Mumford}). While the isogeny and its dual have some deeper relations, for our purposes it is enough to know simply that the relation of having an isogeny between varieties is symmetric. 

\begin{lemma}
The Diophantine exponent is an isogeny invariant.
\end{lemma}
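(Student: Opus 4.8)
The plan is to use an isogeny $f\colon\mathcal{A}_1\to\mathcal{A}_2$ over $K$ to transport approximating sequences from one variety to the other, exploiting that $f$ maps $\mathcal{A}_1(K)$ into $\mathcal{A}_2(K)$, is bi-Lipschitz on small scales between the identity components $\mathcal{A}_1(\R)^0$ and $\mathcal{A}_2(\R)^0$, and essentially preserves the canonical height. Since, as noted above, the existence of an isogeny is a symmetric relation, it suffices to prove $\sigma_{\mathcal{A}_2,K}\ge\sigma_{\mathcal{A}_1,K}$ for an arbitrary such $f$ and then apply the conclusion to a dual isogeny $g\colon\mathcal{A}_2\to\mathcal{A}_1$ over $K$. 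I would first record three facts. (i)~In characteristic $0$ the isogeny $f$ is \'etale, so $f\colon\mathcal{A}_1(\C)\to\mathcal{A}_2(\C)$ is a finite covering map; since $f$ is defined over $\R$ and $\dim\mathcal{A}_1=\dim\mathcal{A}_2$, its differential restricts to an isomorphism of real tangent spaces at every real point, so $f$ restricts to a local diffeomorphism $\mathcal{A}_1(\R)\to\mathcal{A}_2(\R)$; the image of $\mathcal{A}_1(\R)^0$ is then open in $\mathcal{A}_2(\R)$, closed, connected and contains $0$, hence equals $\mathcal{A}_2(\R)^0$, and $f\colon\mathcal{A}_1(\R)^0\to\mathcal{A}_2(\R)^0$ is a finite covering map of compact connected manifolds. (ii)~A standard compactness argument then produces $\delta_0>0$ and $C\ge 1$ with $C^{-1}\dist(P,P')\le\dist\bigl(f(P),f(P')\bigr)\le C\,\dist(P,P')$ whenever $P,P'\in\mathcal{A}_1(\R)^0$ with $\dist(P,P')<\delta_0$; the lower bound comes from lifting short minimizing geodesics through evenly covered balls, using that distinct points of a fibre of $f$ are uniformly separated on the compact $\mathcal{A}_1(\R)^0$. (iii)~By functoriality of the N\'eron--Tate height, $\hat{h}_{\mathcal{A}_1,f^*L}=\hat{h}_{\mathcal{A}_2,L}\circ f$ for any symmetric ample $L$ on $\mathcal{A}_2$; since $f^*L$ is again symmetric and ample and any two canonical heights attached to such line bundles are comparable positive definite quadratic forms on $\R\otimes\mathcal{A}_i(K)$, this gives a uniform estimate $\log\hat{h}_{\mathcal{A}_2}(f(Q))=\log\hat{h}_{\mathcal{A}_1}(Q)+O(1)$ over non-torsion $Q\in\mathcal{A}_1(K)$, and incidentally shows $\sigma(P)$ does not depend on the choice of line bundle.

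Before the main estimate I would dispose of the degenerate case. Since $f$ is continuous with $f(\mathcal{A}_1(\R)^0)=\mathcal{A}_2(\R)^0$, and the same holds for $g$, a short chase of inclusions shows that $\mathcal{A}_1(\R)^0\subseteq\overline{\mathcal{A}_1(K)}$ if and only if $\mathcal{A}_2(\R)^0\subseteq\overline{\mathcal{A}_2(K)}$. If these containments fail then both exponents are $0$ by the convention adopted above and there is nothing to prove, so I would henceforth assume both identity components lie in the corresponding closures.

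For the main estimate I would fix $P_2\in\mathcal{A}_2(\R)^0\setminus\mathcal{A}_2(K)$ and choose $P_1\in\mathcal{A}_1(\R)^0$ with $f(P_1)=P_2$; then $P_1\notin\mathcal{A}_1(K)$, for otherwise $P_2=f(P_1)\in\mathcal{A}_2(K)$. I claim $\sigma(P_2)\ge\sigma(P_1)$. Given $\omega<\sigma(P_1)$ I may assume $\omega>0$, since in any case $\sigma(P_2)\ge 0$: approximating $P_2\in\overline{\mathcal{A}_2(K)}$ requires infinitely many distinct $K$-points, whose heights must tend to infinity because sets of bounded height are finite. By definition there are infinitely many distinct $Q_n\in\mathcal{A}_1(K)$ with $\hat{h}_{\mathcal{A}_1}(Q_n)\to\infty$ and $\dist(P_1,Q_n)\le\hat{h}_{\mathcal{A}_1}(Q_n)^{-\omega}$; as $\omega>0$ we get $\dist(P_1,Q_n)\to 0$, so eventually $Q_n\in\mathcal{A}_1(\R)^0$ and $\dist(P_1,Q_n)<\delta_0$. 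Put $Q_n':=f(Q_n)\in\mathcal{A}_2(K)$; passing to a subsequence (the fibres of $f$ being finite) we may take the $Q_n'$ distinct, and $\hat{h}_{\mathcal{A}_2}(Q_n')$ is comparable to $\hat{h}_{\mathcal{A}_1}(Q_n)\to\infty$. Then, for a suitable constant $C'$,
\[
\dist(P_2,Q_n')=\dist\bigl(f(P_1),f(Q_n)\bigr)\le C\,\dist(P_1,Q_n)\le C\,\hat{h}_{\mathcal{A}_1}(Q_n)^{-\omega}\le C'\,\hat{h}_{\mathcal{A}_2}(Q_n')^{-\omega},
\]
so $\sigma(P_2)\ge\omega$; letting $\omega\uparrow\sigma(P_1)$ gives $\sigma(P_2)\ge\sigma(P_1)\ge\sigma_{\mathcal{A}_1,K}$, and taking the infimum over all admissible $P_2$ yields $\sigma_{\mathcal{A}_2,K}\ge\sigma_{\mathcal{A}_1,K}$. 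Applying the same argument to $g$ gives the reverse inequality.

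The step I expect to be the main obstacle is fact~(ii): extracting a single pair of constants $(\delta_0,C)$ valid uniformly over all of $\mathcal{A}_1(\R)^0$, which requires the evenly-covered-neighbourhood radius and the fibre-separation bound to be made uniform by compactness. The height functoriality in~(iii), though essential, I would simply quote (see \cite[III, Section 1]{Lang_diophantine_survey} and \cite[Chapter VIII]{Silverman}), and the degenerate-case dichotomy, although elementary, is worth stating carefully so that the isogeny invariance holds unconditionally.
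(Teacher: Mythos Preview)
Your proof is correct and follows essentially the same strategy as the paper: push approximating sequences through the isogeny, control distances by a Lipschitz bound, control heights by functoriality of the N\'eron--Tate height, and finish with the dual isogeny. The one noteworthy difference is that where you extract the bi-Lipschitz constant in~(ii) via a compactness/evenly-covered argument, the paper obtains it in one line by observing that $\phi$ lifts to an invertible \emph{linear} map $d\phi$ between the tangent spaces (the tori carry flat translation-invariant metrics), so the global Lipschitz constant is simply $|d\phi|$; this bypasses the step you flagged as the main obstacle. Your version is, on the other hand, more careful about surjectivity onto $\mathcal{A}_2(\R)^0$ and about the degenerate case $\mathcal{A}(\R)^0\not\subset\overline{\mathcal{A}(K)}$, neither of which the paper addresses explicitly.
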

\begin{proof}
Fix an ample divisor $D$ on $\mathcal{A}_2$, and consider its associated N\'eron--Tate height $\hat{h}_{2,D}$.  Let $\phi\in\text{Mor}_K(\mathcal{A}_1(\R),\mathcal{A}_2(\R))$.  Define $\hat{h}_{1,\phi^*(D)}$ to be the N\'eron--Tate height associated with the pullback $\phi^*(D)$ of $D$ to $\mathcal{A}_1$. Note that the pullback of an ample line bundle by an isogeny is ample (\cite[Theorem 1.2.13]{Lazarsfeld}).  Let $Q\in \mathcal{A}_1(K)$ with $\hat{h}_{1,\phi^*(D)}(Q)>0$ and such that $d_{\mathcal{A}_1}(P,Q) < C \left(\hat{h}_{1,\phi^*(D)}(Q) \right)^{-\kappa}$.  There is a constant $c_1>0$, dependent only on $\text{deg}(\phi)$ and $D$, such that $\hat{h}_{2,D}(\phi(Q))\leq\text{deg}(\phi)\hat{h}_{1,\phi^*(D)}(Q)+c_1$ \cite[Chapter~VIII, Theorem~5.6]{Silverman}.  As $\phi$ lifts to an invertible linear map $d\phi$ between the tangent spaces $T_0\mathcal{A}_1$ and $T_0\mathcal{A}_2$ \cite[Chapter~8, Section~2, Subsection~27]{Bombieri_Heights}, it only distorts distances by a constant, $|d\phi|$.
\begin{align*}
\text{d}_{\mathcal{A}_2}(\phi(P),\phi(Q))&\leq|d\phi|\text{d}_{\mathcal{A}_1}(P,Q)\\
&<|d\phi|C(\hat{h}_{1,\phi^*(D)}(Q))^{-\kappa}\\
&\leq|d\phi|C\deg(\phi)^{\kappa}(\hat{h}_{2,D}(\phi(Q))-c_1)^{-\kappa}.\\
&\leq\tilde{C}(\hat{h}_{2,D}(\phi(Q)))^{-\kappa}
\end{align*}
where $\tilde{C}>0$ exists and is finite due to the fact that the kernel of $\phi$ is finite and there is an infinite set of $Q$ with $\hat{h}_1(Q)>0$ implied by $\kappa$-approximablility on $\mathcal{A}_1(\mathbb{R})$ and because there is only a finite number of rational points with height below any given threshold \cite[Theorem 5.11]{Silverman}. This shows that 
\[
\sigma_{\mathcal{A}_2,K} \leq \sigma_{\mathcal{A}_1,K},
\]
and applying the argument again with the dual isogeny and the roles of $\mathcal{A}_1$ and $\mathcal{A}_2$ reversed establishes the opposite inequality.
\end{proof}
\ignore{
\begin{corollary}
The Diophantine exponent $\sigma$ is invariant under all non-constant morphisms defined over $K$.
\end{corollary}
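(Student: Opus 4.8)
The plan is to transport $\kappa$-approximable points across the isogeny in both directions. The only inputs needed are that an isogeny $\phi$ over $K$ (i) maps $\mathcal{A}_1(K)$ into $\mathcal{A}_2(K)$ and $\mathcal{A}_1(\R)^0$ onto $\mathcal{A}_2(\R)^0$, (ii) distorts the Riemannian distance by at most a bounded factor, and (iii) changes the canonical height by at most bounded multiplicative and additive constants. As (ii) and (iii) do not affect the $\limsup$ defining $\sigma$, the $\phi$-pushforward of an approximating sequence for a point $P$ is an equally good approximating sequence for $\phi(P)$, so the pointwise exponent can only increase under $\phi$; taking infima over the respective real loci then gives one inequality, and the dual isogeny gives the other.

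To set up (i)--(iii): fix $\phi:\mathcal{A}_1\to\mathcal{A}_2$ defined over $K$. Being $K$-rational, $\phi$ sends $K$-points to $K$-points; in characteristic zero $\phi$ is separable, so $d\phi$ is a linear isomorphism \cite[Chapter~8, Section~2, Subsection~27]{Bombieri_Heights}, whence $\phi$ is an open homomorphism of real Lie groups and $\phi(\mathcal{A}_1(\R)^0)$ is an open, hence closed, connected subgroup of $\mathcal{A}_2(\R)$, i.e. equals $\mathcal{A}_2(\R)^0$; this gives (i), and also shows that the hypothesis ``$\mathcal{A}(\R)^0\subseteq\overline{\mathcal{A}(K)}$'' is isogeny-invariant (using $\phi(\overline{\mathcal{A}_1(K)})\subseteq\overline{\mathcal{A}_2(K)}$), so we may assume it holds, the complementary case making both exponents $0$. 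For (ii), compactness of $\mathcal{A}_1(\R)$ together with the everywhere-defined isomorphism $d\phi$ makes $\phi$ globally bi-Lipschitz, $\dist_{\mathcal{A}_2}(\phi x,\phi y)\le |d\phi|\,\dist_{\mathcal{A}_1}(x,y)$. For (iii), take a symmetric ample line bundle $D$ on $\mathcal{A}_2$ and set $D_1=\phi^*D$ on $\mathcal{A}_1$; then $D_1$ is symmetric (as $\phi$ commutes with $[-1]$) and ample \cite[Theorem 1.2.13]{Lazarsfeld}, functoriality of the N\'eron--Tate height gives $\hat h_{\mathcal{A}_2,D}(\phi Q)=\hat h_{\mathcal{A}_1,D_1}(Q)$, and $\sigma_{\mathcal{A}_i,K}$ is independent of the chosen symmetric ample bundle since any two yield comparable positive semidefinite quadratic forms, hence canonical heights with logarithms differing by $O(1)$ on nontorsion points.

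Now let $P_2\in\mathcal{A}_2(\R)^0\setminus\mathcal{A}_2(K)$, choose by (i) a lift $P_1\in\mathcal{A}_1(\R)^0$ with $\phi(P_1)=P_2$, and note $P_1\notin\mathcal{A}_1(K)$, so $\sigma(P_1)\ge\sigma_{\mathcal{A}_1,K}$. Fix $\kappa<\sigma_{\mathcal{A}_1,K}$; then there are infinitely many $Q_1\in\mathcal{A}_1(K)$ of unbounded canonical height with $\dist_{\mathcal{A}_1}(P_1,Q_1)<\hat h_{\mathcal{A}_1,D_1}(Q_1)^{-\kappa}$. Setting $Q_2=\phi(Q_1)\in\mathcal{A}_2(K)$, parts (ii) and (iii) give $\dist_{\mathcal{A}_2}(P_2,Q_2)\le|d\phi|\,\hat h_{\mathcal{A}_1,D_1}(Q_1)^{-\kappa}=|d\phi|\,\hat h_{\mathcal{A}_2,D}(Q_2)^{-\kappa}$, and since $\hat h_{\mathcal{A}_2,D}(Q_2)=\hat h_{\mathcal{A}_1,D_1}(Q_1)$ is unbounded, the set of these $Q_2$ is infinite by Northcott's theorem \cite[Theorem 5.11]{Silverman}. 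Hence $P_2$ is $\kappa$-approximable, so $\sigma(P_2)\ge\kappa$; letting $\kappa\uparrow\sigma_{\mathcal{A}_1,K}$ and then taking the infimum over $P_2$ yields $\sigma_{\mathcal{A}_2,K}\ge\sigma_{\mathcal{A}_1,K}$. The identical argument for the dual isogeny $\hat\phi:\mathcal{A}_2\to\mathcal{A}_1$ (again $K$-rational) gives $\sigma_{\mathcal{A}_1,K}\ge\sigma_{\mathcal{A}_2,K}$, so the two are equal.

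I do not expect a serious obstacle here: the two substantive inputs---that an isogeny is bi-Lipschitz because $d\phi$ is invertible, and that it respects N\'eron--Tate heights by functoriality---are standard and cited, and the rest (independence of $\sigma_{\mathcal{A},K}$ from the line bundle, and the Northcott argument that the pushed-forward sequence stays infinite) is routine. The one genuine subtlety is that $\phi$ need not be injective on $K$-points and $\phi^{-1}$ need not preserve $K$-rationality, so one really must invoke the dual isogeny to obtain the reverse inequality rather than inverting $\phi$ directly.
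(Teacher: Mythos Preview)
Your argument is a correct proof of \emph{isogeny} invariance, and it follows the same strategy as the paper's proof of the preceding Lemma: transport a $\kappa$-approximating sequence across $\phi$, using that $d\phi$ is an invertible linear map (so $\phi$ is bi-Lipschitz on the compact real locus) and that canonical heights behave well under pullback, then run the dual isogeny for the reverse inequality. Your height step is in fact slightly cleaner than the paper's, since you invoke the exact functoriality $\hat h_{\mathcal{A}_2,D}(\phi Q)=\hat h_{\mathcal{A}_1,\phi^*D}(Q)$ rather than the coarser inequality $\hat h_{2,D}(\phi(Q))\le \deg(\phi)\,\hat h_{1,\phi^*D}(Q)+c_1$ the paper cites.

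The gap is that the Corollary is stated for \emph{non-constant morphisms}, not isogenies, and you silently assume from the first line that $\phi$ is an isogeny (you use $d\phi$ invertible, and later the dual isogeny $\hat\phi$). The paper's reduction, implicit in the comment attached to this Corollary, is that for \emph{simple} abelian varieties every non-constant $K$-morphism $\phi:\mathcal A_1\to\mathcal A_2$ factors as a translation by a $K$-point followed by a homomorphism, and a nonzero homomorphism between simple abelian varieties has finite kernel and is surjective, hence is an isogeny. Translations are isometries and change $\hat h$ only by a lower-order amount (since $\hat h$ is a positive-definite quadratic form, $\hat h(P+T)/\hat h(P)\to 1$ as $\hat h(P)\to\infty$ for fixed $T$), so they preserve $\sigma$; this reduces the Corollary to the Lemma. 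You should insert this one-line reduction before fixing $\phi$ as an isogeny; without it, the appeal to $d\phi$ being invertible and to the existence of $\hat\phi$ is not justified for an arbitrary non-constant morphism.
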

\comdavidl{This is accurate, but equivalent to the lemma above for simple Abelian varieties.}
}

\section{Main Theorem}

\subsection{A problem in inhomohegenous approximation}

\comment{
\comkeith{Cut this discussion, conjecture discussed above. BA defined in intro now}
One could try to prove Waldschmidt's original conjecture for rank 1 elliptic curves over $\Q$. Let $Q \in E(\Q)$ be the generator of the free part of $E(\Q)$. Waldschmidt's conjecture is that for every $P \in E(\R) \setminus E(\Q)$ and for every $\varepsilon > 0$, there exists $C$ such that for every $N \geq 1$, there exists $n \leq N$ satisfying 
\begin{equation}\label{higher_conj}
d_E(P,[n]Q) < C(N^2\hat{h}(Q))^{-1/2 + \varepsilon}.
\end{equation}
Our Theorem \ref{weakDT} above establishes this result without the $\varepsilon$ term, but with $n$, rather than $N$, in the denominator.

There has been recent work on understanding those tuples for which Equation \eqref{higher_conj} holds or fails to hold (see \cite{KleinbockWadleigh}, \cite{KimKim20}). Although the statements of the theorems are not useful for us here (because they're measure theoretic and our set of interest is a null set), studying the proofs might shed some light on obstructions to this inequality. If we can identify \emph{necessary} conditions for when a tuple does not satisfy Equation \eqref{higher_conj}, then we identify a property which our tuples must avoid.

\comkeith{Add the isogeny invariance, and the equivalence of BA or not VWA with various strengthenings of the inequality}.
}

We can give some insight on Waldschmidt's conjecture using inhomogeneous Diophantine approximation in $\R^d$. Recall the definitions of badly approximable and very well approximable matrices given in Definition \ref{BA_VWA} above. Eventually, we will show that Waldschmidt's conjecture for a pair $(\mathcal{A}, K)$ is equivalent to a certain matrix being not very well approximable. We first mention an equivalent formulation of a stronger condition on the matrix, but we omit the proof because it's essentially the same as that of Theorem \ref{VWA_equiv} below.  

Recently, a paper by paper by Moshchevitin and Neckrasov \cite{MoshchevitinNeckrasov} established results about inhomogeneous approximations which are more general than what we need here, but for the ease of the reader we provide independent proofs of these statements. The methods we use are different than those employed in \cite{MoshchevitinNeckrasov}, and the language they employ is much more general. We give the way to recover our Theorems \ref{BA_equiv} and \ref{VWA_equiv} from their work below. 

\comment{
We will need the following concepts, similar to many introduced already in the paper. 
\begin{definition}\label{BA_VWA}
We say that an $m\times n$ matrix $\matrixA$ is \textit{badly approximable}, written $\matrixA \in \BA$, if there exists some constant $c >0$ for which for every $\mathbf{p}\in \Z^m$ and $\mathbf{q}\in\Z^n\setminus\{\boldsymbol{0}\}$ the following inequality holds: 
\[
\left| \matrixA\qq - \pp\right| > \frac c{\|\qq\|^\frac{n}{m}}.
\]
We say that a matrix $\matrixA$ is $\sigma$-\textit{very well approximable} for $\sigma > 1$ if for infinitely many $\mathbf{p}\in \Z^m$ and $\mathbf{q}\in\Z^n\setminus\{\boldsymbol{0}\}$ we have
\[
\left|\matrixA\qq - \pp \right| < \frac 1 {\|\qq\|^{\frac{n}{m}\sigma}}.
\]
We say that $\matrixA$ is simply \textit{very well approximable}, written $\matrixA \in \VWA$, if it is $\sigma$-very well approximable for some $\sigma > 1$.
\end{definition}

We do have the following in this direction.
}

\begin{theorem}\label{BA_equiv}
Fix $m,n\in\N$ and let $\matrixA \in \MM_{m\times n}$. Then  $\matrixA$ is badly approximable  if and only if there exists a constant $C$ such that for all sufficiently large $Q$ and all $\boldsymbol\gamma$, there exist $\pp\in\Z^m$ and $\qq\in\Z^n$ with $0 < \|\qq\| \leq Q$ satisfying
\[
Q^\frac{n}{m}\|\matrixA\qq + \pp + \boldsymbol\gamma\| \leq C.
\]
\end{theorem}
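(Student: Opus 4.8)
The plan is to prove the equivalence between badly approximable matrices and the stated uniform inhomogeneous approximation property by passing through the dynamical/geometry-of-numbers reformulation, which is the natural home for both conditions.

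For the ``only if'' direction, suppose $\matrixA$ is badly approximable, so the lattice $\Lambda_\matrixA = \begin{pmatrix} I_m & \matrixA \\ 0 & I_n \end{pmatrix}\Z^{m+n}$ has the property that its orbit under the diagonal flow $g_t = \mathrm{diag}(e^{t/m},\dots,e^{t/m},e^{-t/n},\dots,e^{-t/n})$ is bounded (this is the Dani correspondence; badly approximable $\iff$ bounded orbit, and I would invoke it or reprove the relevant half). Given $\boldsymbol\gamma$ and large $Q$, set $t = \log Q$ and consider the translated lattice $g_t \Lambda_\matrixA$ together with the shift vector $g_t(\boldsymbol\gamma, \mathbf 0)$. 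Since $g_t\Lambda_\matrixA$ lies in a fixed compact subset of the space of unimodular lattices, by Mahler's criterion it contains no short vectors, so its covolume-normalized geometry is uniformly controlled; hence there is a uniform $R$ (depending only on the compact set, i.e.\ only on $\matrixA$) such that every point of $\R^{m+n}$ is within distance $R$ of $g_t\Lambda_\matrixA$. Applying this to the point $-g_t(\boldsymbol\gamma,\mathbf 0)$ produces a lattice vector $g_t(\matrixA\qq + \pp, \qq)$ within $R$ of it; unwinding the scaling gives $\|\matrixA\qq + \pp + \boldsymbol\gamma\| \leq R e^{-t/m} = R Q^{-n/m}\cdot Q^{?}$ — here I need to be careful with the exponent bookkeeping, but the top $m$ coordinates are scaled by $e^{t/m}$ and the bottom $n$ by $e^{-t/n}$, so $\|\qq\| \leq R e^{t/n} $ and $\|\matrixA\qq+\pp+\boldsymbol\gamma\| \le R e^{-t/m}$, and after replacing $Q$ by a constant multiple of $e^{t/n}$ one obtains $Q^{n/m}\|\matrixA\qq+\pp+\boldsymbol\gamma\| \le C$ with $0 < \|\qq\| \le Q$. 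The one subtlety is ensuring $\qq \neq \mathbf 0$: if the only nearby lattice vector had $\qq = \mathbf 0$ we would need $\|\matrixA\cdot\mathbf 0 + \pp + \boldsymbol\gamma\|$ small, i.e.\ $\boldsymbol\gamma$ close to $\Z^m$, a case handled separately (or by noting the covering radius bound gives a whole ball's worth of lattice points, not all with $\qq=\mathbf 0$, once $t$ is large enough that $e^{-t/m}$ is small).

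For the ``if'' direction I would argue by contrapositive: if $\matrixA$ is \emph{not} badly approximable, then there is a sequence $\qq_k$ with $\|\matrixA\qq_k + \pp_k\| \cdot \|\qq_k\|^{n/m} \to 0$. The idea is to choose $\boldsymbol\gamma$ adversarially — roughly $\boldsymbol\gamma \approx \tfrac12(\matrixA\qq_k + \pp_k)$-type obstructions along a carefully chosen subsequence — so that for the relevant scale $Q \sim \|\qq_k\|$ every integer solution of bounded height is forced to be far from $-\boldsymbol\gamma$. More cleanly, in the dynamical picture not badly approximable means the orbit $g_t\Lambda_\matrixA$ is unbounded, so along some $t_k \to \infty$ the lattice $g_{t_k}\Lambda_\matrixA$ has arbitrarily short nonzero vectors; by Minkowski/duality considerations a lattice with a very short vector has very large covering radius in the complementary directions, and picking $\boldsymbol\gamma$ in the corresponding ``deep hole'' direction defeats the uniform bound $C$. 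This produces, for each $C$, arbitrarily large $Q$ and a $\boldsymbol\gamma$ with $Q^{n/m}\|\matrixA\qq+\pp+\boldsymbol\gamma\| > C$ for all admissible $\pp,\qq$, which is the desired negation.

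The main obstacle I anticipate is not the soft structure — the Dani correspondence and Mahler's criterion do the heavy lifting — but rather the quantitative bookkeeping that converts ``covering radius of $g_t\Lambda$ is bounded'' into the clean inequality $Q^{n/m}\|\matrixA\qq + \pp + \boldsymbol\gamma\| \le C$ with the exact constraint $0 < \|\qq\| \le Q$, including the exponent $n/m$ versus $n$, the passage between the parameter $t$ and the parameter $Q$, and especially ruling out the degenerate solution $\qq = \mathbf 0$ uniformly in $\boldsymbol\gamma$. A self-contained alternative avoiding the flow entirely is to run Dirichlet's box principle (Theorem~\ref{dirthm}) on the lattice generated by $\matrixA$ together with an auxiliary inhomogeneous coordinate and then use badly approximability to control the genuinely inhomogeneous part; I would present whichever of the two is shorter, but the dynamical phrasing makes the ``if'' direction's adversarial choice of $\boldsymbol\gamma$ most transparent, so I lean toward that. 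Finally, as the paper notes, one can alternatively deduce the statement from Moshchevitin--Neckrasov~\cite{MoshchevitinNeckrasov}, and I would include a one-line remark indicating the translation of parameters to their setting.
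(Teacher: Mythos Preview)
Your forward (``only if'') direction is the paper's own argument: the Dani correspondence together with Mahler's compactness criterion gives a uniform bound on the codiameter of $g_t\Lambda_\matrixA$, and unwinding the scaling produces the inhomogeneous estimate. (The paper omits the proof of this theorem, pointing to the parallel Theorem~\ref{VWA_equiv}, whose forward half is exactly this with an extra $\varepsilon$.) Your worry about $\qq=\mathbf 0$ is legitimate and easily patched: if the nearest grid vector has $\qq=\mathbf 0$ then $\boldsymbol\gamma$ lies within $C'Q^{-n/m}$ of $\Z^m$, and one then superimposes a homogeneous Dirichlet approximation.

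Your reverse direction, however, has a real gap. You argue: not BA $\Rightarrow$ $\lambda_1(g_{t_k}\Lambda_\matrixA)\to 0$ $\Rightarrow$ covering radius $\to\infty$ $\Rightarrow$ pick $\boldsymbol\gamma$ in a deep hole. The first two steps are fine, but the deep hole is a point of $\R^{m+n}$, whereas $\boldsymbol\gamma$ must lie in $\R^m$: you need a point of the special form $(y,\mathbf 0)$ that is far from $g_{t_k}\Lambda_\matrixA$, and a large covering radius does not supply one for a general unimodular lattice (for $\Z(\epsilon,0)\oplus\Z(0,\epsilon^{-1})$ every $(y,0)$ is within $\epsilon/2$ of the lattice, yet the covering radius is $\asymp\epsilon^{-1}$). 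The paper's route, following its proof of Theorem~\ref{VWA_equiv}, is to pass to short vectors in the \emph{dual} lattice $(g_t\Lambda_\matrixA)^*$: such a vector $(\aa,\bb)$ confines $g_t\Lambda_\matrixA$ to the family of hyperplanes $\{\aa\cdot\cc+\bb\cdot\dd\in\Z\}$, whose traces on $\{\dd=\mathbf 0\}$ are hyperplanes in $\R^m$ spaced $1/\|\aa\|$ apart, and $\boldsymbol\gamma$ is placed between them. The paper then organises this construction as a strategy in the hyperplane absolute winning game, obtaining the stronger conclusion that the set of obstructing $\boldsymbol\gamma$ is HAW (hence of full Hausdorff dimension). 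Your phrase ``Minkowski/duality considerations'' is pointing toward the right tool but has not yet located this mechanism; as written, the deep-hole step does not go through.
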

\ignore{
\begin{proof}
To prove the forward direction, assume that $\matrixA\in\MM_{m\times n}$ is badly approximable, and let $c$ be as in  Definition \ref{BA_VWA}. Consider the unimodular lattice 
\[
\Lambda_\matrixA := \begin{pmatrix} 1&\matrixA \\ 0&1\end{pmatrix} \Z^{m+n}
\]
and the flow on the space of lattices given by 
\[
g_t := \begin{pmatrix} e^{t/m} I_m &0\\0&e^{-t/n} I_n\end{pmatrix}.
\]
By the definition of badly approximable, it follows that none of the nonzero lattice vectors in $g_t \Lambda_\matrixA$ have supremum norm less than $c^\frac{1}{m+n}$.

By Mahler's Compactness Theorem, since the lattices $\{g_t \Lambda_\matrixA\}$ have nonzero shortest length bounded from below, this set is precompact in the topology on the space of unimodular lattices. The function assigning to a lattice $\Lambda$ its \textit{codiameter} (the diameter of the corresponding homogeneous space $\R^{m+n}/\Lambda$) is continuous on the space of lattices, so its maximum is bounded on any precompact set. Therefore, the codiameters of the lattices $\{g_t \Lambda_\matrixA\}$ are uniformly bounded. 

Now consider the affine grids obtained by translating a lattice $\Lambda$ by some vector $\yy$. In the quotient space $\R^{m+n}/\Lambda$, every element of the grid $\Lambda + \yy$ is mapped to the projection of the vector $\yy$. Distances in the projection are given by 
\[
d_{\R^{m+n}/\Lambda}(\xx,\yy) = \inf_{\rr \in \Lambda}\|\xx - \yy + \rr\|.
\]
This means that the length of the shortest nonzero vector in the affine grid is equal to the distance from $\0$ to $\yy$ in the quotient space $\R^{m+n}/\Lambda$, which is bounded from above by the codiameter of $\Lambda$. 

Combining this observation with the Mahler's argument above implies that for any $\boldsymbol\gamma\in\R^m$ the affine grid 
\[
g_t\Lambda_\matrixA + \begin{pmatrix} \boldsymbol\gamma\\0 \end{pmatrix}
\] 
has shortest nonzero vector not greater in length than some uniform constant $C_2$. 

Fix $Q$ as in the statement of the theorem, and let $t = n\log(Q/C_2)$. Consider the grid $g_t \Lambda_\matrixA + \begin{pmatrix} e^{t/m} \boldsymbol\gamma \\ 0 \end{pmatrix}$. We know that there exists at least one nonzero element whose norm does not exceed $C_2$. All such elements of this grid can be written as $g_t \Lambda_\matrixA \begin{pmatrix} \pp \\ \qq \end{pmatrix} + \begin{pmatrix} e^{t/m} \boldsymbol\gamma \\ 0 \end{pmatrix}$, so we pick such a vector and compute its sup norm: 
\[
\begin{pmatrix} e^{t/m} I_m &0\\0& e^{-t/n} I_n\end{pmatrix} \begin{pmatrix} 1&\matrixA\\ 0&1\end{pmatrix} \begin{pmatrix} \pp\\ \qq\end{pmatrix} + \begin{pmatrix} e^{t/m}\boldsymbol\gamma\\ 0 \end{pmatrix} = \begin{pmatrix} e^\frac tm\left(\matrixA\qq + \pp + \boldsymbol\gamma\right)\\ e^{-t/n} \qq \end{pmatrix},
\]
where both components of this vector are less than $C_2$ in absolute value. 

Looking at the second inequality first, namely that $e^{-t/n}\|\qq\| < C_2$, we see that this means 
\[
e^{-t/n}\|\qq\| = \frac {C_2}{Q} \|\qq\| \leq C_2 \Rightarrow \|\qq\| \leq Q. 
\]
By examining the first component, we similarly see that 
\[
C_2 \geq \|e^{t/m}\left(\matrixA\qq + \pp + \boldsymbol\gamma\right)\| = \left(\frac{Q}{C_2}\right)^\frac{n}{m} \|\matrixA\qq + \pp + \boldsymbol\gamma\|
\]
which implies that $Q^\frac{n}{m}\|\matrixA\qq + \pp + \boldsymbol\gamma\| \leq C^{1+\frac{n}{m}}_2$.

To prove the complementary direction, we will actually prove the following: if $\matrixA \notin \BA$, then the set 
\[
D:= \left\{\gamma:\ \forall C,\ \exists^\infty Q \text{ such that } \forall \pp\in\Z^m,\qq\in\Z^n \, 0 < \|\qq\| \leq Q,\ \ Q^\frac{n}{m}\|\matrixA\qq + \pp + \boldsymbol\gamma\|> C \right\}
\]
is hyperplane absolutely winning for any $\beta<\frac13$. \comdavid{need to define hyperplane absolute winning} Since any hyperplane absolutely winning set is in fact uncountable and has full Hausdorff dimension, this is a priori stronger than simply proving that the set is nonempty. 

Since $\matrixA\notin\BA$, for each $k$, there exist lattice points $(u_\matrixA^T)^{-1} (p_k, q_k)^T \in \Lambda_\matrixA^*$ for which 
\[
\displaystyle\min_t \left\| ((g_t u_\matrixA)^T)^{-1} (p_k,q_k)^T\right\| < \frac1{k}.
\]
Let $t_k$ denote the unique time at which $\rr_k = (\aa_k,\bb_k) := ((g_{t_k} u_\matrixA)^T)^{-1} (p_k,q_k)^T$ satisfies $\|\aa_k\| = \|\bb_k\|$.
Without loss of generality, by extracting a subsequence if necessary, we can assume that 
\[
t_{k+1}-t_k > -2m\log(\beta)
\]
for all $k$. 

We now describe Alice's strategy, as a response to Bob's choice of ball. She deletes a hyperplane based on the following criterion:
\begin{center}
if for some $k$, the radius of Bob's ball lies between $4C\beta^{-1} e^{-\frac{t_k}{m}}$ and   $4C\beta^{-2}e^{-\frac{t_k}{m}}$,
\end{center}
then Alice will delete a specific hyperplane-neighborhood (to be determined shortly), and otherwise she can choose arbitrarily. So it remains only to explain how Alice will make her choice when that criterion is satisfied, and why such an outcome will belong to $D$.

Since $(g_{t_k} \Lambda_\matrixA)^* = ((g_{t_k} u_\matrixA)^T)^{-1} \Z^{m+n}$, for all $(\cc,\dd)\in g_{t_k} \Lambda_\matrixA$ we have $\aa\cdot\cc + \bb\cdot\dd \in \Z$. Now for each $i\in\Z$ let $H_{k,i}' = \{(\cc,\dd)\in\R^{m+n} : \aa\cdot\cc + \bb\cdot\dd = i\}$ and $H_{k,i} = \{\cc\in \R^m : (\cc,\0) \in H_{k,i}'\}$. Note that the distance between $H_{k,i}$ and $H_{k,i+1}$ is $1/\|\aa\| \geq k \to \infty$. So when $k$ is sufficiently large, Bob's ball only intersects at most \textit{one} of the hyperplane-neighborhoods $N(H_{k,i},2C e^{-\frac{t_k}{m}})$. Indeed, the distance between any two such intervals is at least $e^{-\frac{t_k}{m}}(k - 4C) \asymp e^{-\frac{t_k}{m}}k$, whereas Bob's ball is only of width at most $\frac{8Ce^{-\frac{t_k}{m}}}{\beta^2} \asymp e^{-\frac{t_k}{m}}$. Therefore Bob's choice can intersect only at most one such hyperplane-neighborhood, and Alice can delete it when appropriate. 

So it remains to show that the outcome of the game $\yy$ lies in $D$. \comkeith{If $y \notin D$ then a certain inequality has to hold for all $Q$ sufficiently large. Find $k$ such that $t_k > \log(Q_0)$, then proceed as follows. } Fix $C \geq 1$ and let $Q = e^\frac{t_k}{n}$. If there exist $\pp,\qq$ with $\|\qq\| \leq Q$ for which $Q^\frac{n}{m}\|\matrixA \qq + \pp - \yy\| \leq C$, then we will show that $\yy$ was deleted by Alice during one of her moves. Consider the point $\yy_k := g_{t_k}(\yy,0)^T = (e^\frac{t_k}{m}\yy, 0)^T$, and its distance to the lattice $\Lambda_k := g_{t_k} \Lambda_\matrixA$. 
\[
\dist(\yy_k, \Lambda_k) \leq \max\{e^\frac{t_k}{m}\|\matrixA\qq + \pp - \yy\|, e^{-\frac{t_k}{n}}\|\qq\|\} \leq \max\{C, 1\} = C.
\]
Therefore
\[
\dist(\yy_k,H_{k,i}') \leq C \text{ for some $i\in\Z$.}
\]
Say for example $\dist(\yy_k,(\cc,\dd)) \leq C$ for some $(\cc,\dd)\in H_{k,i}'$. Then $\aa\cdot\cc + \bb\cdot\dd = i$, so $\aa\cdot (\cc + \frac{\bb\cdot\dd}{\|\aa\|^2}\aa) = i$ i.e. $\cc + \frac{\bb\cdot\dd}{\|\aa\|^2}\aa \in H_{k,i}$. Since $\|\aa\| = \|\bb\|$, we have $\|\frac{\bb\cdot\dd}{\|\aa\|^2}\aa\| \leq \|\dd\| \leq C$. So $\dist(\cc,H_{k,i}) \leq C$ and thus
\[
\dist(e^\frac{t_k}{m}\yy,H_{k,i}) \leq 2C.
\]
Therefore, by the above equation, the distance between $\yy$ and $e^{-\frac{t_k}{m}}H_{k,i}$ is less than $2Ce^{-\frac{t_k}{m}}$, in which case Alice would have deleted the hyperplane-neighborhood $N(H_{k,i},Ce^{-\frac{t_k}{m}})$ containing $\yy$, a contradiction. 

To finish the proof, we recall that the hyperplane absolute game has the countable intersection property discussed above \comkeith{Discuss it above}, which shows that we can do this for every integer $C\geq 1$, and the intersection of all of these sets, which is precisely $D$, is still absolutely winning. 

Alternative reverse:
Assume there is a $C$ for every $Q$ sufficiently large, and for all $\boldsymbol\gamma$ there is a $\mathbf{q}\in\Z^n$ and an $\mathbf{p}\in\Z^m$ such that
\[
\|\mathbf{q}\|\leq Q
\]
and
\[
Q^\frac nm\|A\mathbf{q} - \mathbf{p} -\boldsymbol\gamma\| \leq C.
\]
Define
\[
t=\frac{mn}{m+n}\log\left(\frac{Q^{1+\frac{n}{m}}}{C}\right)
\]
or
\[
Q=C^\frac{m}{m+n}e^\frac{t}{n}.
\]
This implies that
\begin{align*}
\max\left(e^\frac{t}{m}\|A\mathbf{q} - \mathbf{p} -\boldsymbol\gamma\|,e^{-\frac{t}{n}}\|\mathbf{q}\|\right)
&\leq \max\left(e^\frac{t}{m}\frac{C}{Q^\frac{m}{n}},e^{-\frac{t}{n}}Q\right)\\
&= C^\frac{m}{m+n}.
\end{align*}
So that the RHS of this equation is an upper bound on the codiameter of the lattice $g_t \Lambda_A$, and thus by Lemma \ref{lemmacodiamest} we have
\[
\lambda_{m+n}\leq 2C^\frac{m}{m+n}.
\]
By Minkowski's theorem, $\frac{2^{m+n}}{(m+n)!}\leq\lambda_1\cdots\lambda_{m+n}$.
Therefore,
\[
\lambda_1\geq \frac{2^{m+n}}{(m+n)!\lambda_{m+n}^{m+n-1}}\geq \frac{2}{(m+n)!C^\frac{m(m+n-1)}{m+n}}.
\]
So that for all $t>t_0$,
\begin{equation*}
\Delta(g_t\Lambda_A)\leq \log \bigg(\frac{(m+n)!C^\frac{m(m+n-1)}{m+n}}{2}\bigg).
\end{equation*}
Thus, one has that $\Delta(g_t\Lambda_A)$ is bounded above. 
By the Dani correspondence theorem, this implies that $A$ is badly approximable.
\end{proof}
}
\comment{
\comkeith{Cut this, but possibly keep Lemma 2.4 below}

\begin{theorem}
Let $\matrixA \in \MM_{m\times n}$. Then  $\matrixA$ is not very well approximable \comdavid{define VWA} if and only if for all $\varepsilon > 0$, there  exists a constant $C$ (depending on $\varepsilon$) such that for all sufficiently large $Q$ and all $\gamma$, there exist $\pp\in\Z^m$ and $\qq\in\Z^n$ with $0 < \|\qq\| < Q$ satisfying
\[
Q^{1-\varepsilon}\ \|\matrixA\qq + \pp + \boldsymbol\gamma\| \leq C.
\]
\end{theorem}

\begin{proof}
This proof is similar to that of the previous theorem, except that the lower bound on the length of the shortest nonzero vector in the lattice $\Lambda_\matrixA$ now comes from the assumption that $\matrixA$ is not in VWA. Fix $\sigma > 2$.
For convenience, for any lattice $\Lambda$, let $\Delta(\Lambda) = \max_{\xx \ne \0} -\log(\left\| \xx \right\|)$, which captures the length of the shortest nonzero vector of the lattice. 
By \cite[Lemma 8.3, Theorem 8.5]{KleinbockMargulis}, there exists a function $r(t)$ for which 
\[
\matrixA \text{ is $\sigma$-VWA if and only if } \Delta(g_t \Lambda_\matrixA) \geq r(t) \text{ for unboundedly large $t$}.
\]
Explicitly, that function is $r(t) = \frac{\sigma-2}{\sigma} t$ in this situation. The above then says that $\matrixA$ is $\sigma$-VWA if and only if, for unboundedly large times, the lattice $g_t \Lambda_\matrixA$ has a nonzero vector of length less than $e^{-t (\sigma-2)/\sigma}$. So by our assumption that $\matrixA$ is not VWA, and hence not $\sigma$-VWA, we have that the length of the shortest nonzero vector of $g_t \Lambda_\matrixA$ is bounded from below by $e^{-t (\sigma-2)/\sigma}$ for all $t$ greater than some $T_0$.

Now, as before, this lower bound on the length of the shortest nonzero lattice vector will give rise to an upper bound on the length of the shortest nonzero grid vector for any translate of $g_t \Lambda_\matrixA$. By Minkowski's Second Theorem \comkeith{ref?}, if we denote by $\lambda_1$ the length of the shortest nonzero lattice vector and by $\lambda_2$ the length of the shortest lattice vector linearly independent from the previous vector, then 
\[
C_0 \leq \lambda_1 \lambda_2 \leq C_1
\]
for some constants $C_0, C_1$.
We know from above that $\lambda_1 \geq e^{-t(\sigma-2)/\sigma}$, so we obtain that $\lambda_2 \leq C_1 e^{t(\sigma-2)/\sigma}$. This means that the codiameter of the lattice $g_t \Lambda_\matrixA$ is bounded by some constant $C_2$ times $e^{t(\sigma-2)/\sigma}$. 
Note that, unlike in the previous case, this bound depends on $t$ and indeed increases to infinity as $t$ grows. 

Returning to the original inequality, fix $Q$ and let \[
t = \frac{\log(Q/C_2)}{1 + \frac{\sigma-2}{\sigma}}.
\]
Consider the grid $g_t \Lambda_\matrixA + \begin{pmatrix} e^{t/m} \boldsymbol\gamma \\  0 \end{pmatrix}$. As in the previous theorem, we know that there exists a grid vector whose supremum norm does not exceed $C_2 e^{t(\sigma-2)/m\sigma}$, so let this grid vector be given by the formula
\[
\begin{pmatrix} e^{t/m}&0\\0&e^{-t/n}\end{pmatrix} \begin{pmatrix} 1&\matrixA\\ 0&1\end{pmatrix} \begin{pmatrix} \pp\\\qq\end{pmatrix} + \begin{pmatrix} e^{t/m} \boldsymbol\gamma\\ 0 \end{pmatrix} = \begin{pmatrix} e^{t/m}\left(\matrixA\qq + \pp + \boldsymbol\gamma\right)\\ e^{-t/m} \qq \end{pmatrix}
\]
for some $\begin{pmatrix} \pp \\ \qq\end{pmatrix}\in \Z^{m+n}$. Our choice of $t$ makes the following true (this is what inspired the choice of $t$): 
\[
e^{t/m} = \frac{Q}{C_2 e^{t(\sigma-2)/m\sigma}}.
\]
Using this and examining the second component, we have 
\begin{align*}
e^{-t/m}\|\qq\| &\leq C_2 e^{t(\sigma-2)/m\sigma}\\
\frac{C_2 e^{t(\sigma-2)/m\sigma}}{Q}\|\qq\| &\leq C_2 e^{t(\sigma-2)/m\sigma}\\
\|\qq\|&\leq Q.
\end{align*}
For the first component, we similarly have 
\begin{align*}
e^{t/m}\left|\matrixA\qq + \pp + \boldsymbol\gamma\right| & \leq C_2 e^{t(\sigma-2)/m\sigma}\\ e^{t(1-(\sigma-2)/\sigma)/m}\left|\matrixA\qq + \pp + \boldsymbol\gamma\right| & \leq C_2\\ e^{2t/m\sigma} \left|\matrixA\qq + \pp + \boldsymbol\gamma\right| & \leq C_2\\ \left(\frac{Q}{C_2}\right)^{1/(\sigma-1)} \left|\matrixA\qq + \pp + \boldsymbol\gamma\right| & \leq C_2\\ Q^{1/(\sigma-1)}\left|\matrixA\qq + \pp + \boldsymbol\gamma\right| & \leq C.
\end{align*}

We obtain the desired bound by setting $1-\varepsilon = \frac1{\sigma-1}$, and note that for any $\varepsilon > 0$, the corresponding $\sigma$ is $>2$, and vice versa. In fact, solving gives $\varepsilon = \frac{\sigma-2}{\sigma-1}$.

\comkeith{Need to prove the converse still}

We will prove the reverse direction by contraposition, so we assume that $\matrixA \in \VWA_\sigma$ for some $\sigma > 2$. Define $\mu=1+\frac{\sigma}{2}$. Note that $2 < \mu < \sigma$.
Consider the set
\begin{equation*}
    D_{C,\mu}:= \left\{\gamma\in\R^m:\forall Q_0\in\R^+\,\exists Q>Q_0\,\exists \pp\in\Z^m,\qq\in\Z^n,\,0<\|\qq\|<Q,\,\|\matrixA\qq + \pp + \boldsymbol\gamma\|>\frac{C}{Q^\frac1{\mu-1}}\right\}.
\end{equation*}
We will show that this set is absolutely winning.  This implies that it is uncountable, and so nonempty.
To show $D_{C,\mu}$ is absolutely winning, we exhibit a winning strategy for Alice (player II) in the Absolute Winning game on $\R$ with $D_{C,\mu}$ as her target set.

The strategy is as follows. 
First, Alice seeks a sequence of times $t$ at which $e^{\frac{\sigma-2}{\sigma}t}\|g_t\Lambda_A\|$ has a local minimum.
The following lemma demonstrates that this sequence exists and occurs at the same times as the local minima of $\|g_t\Lambda_A\|$.
\comkeith{Lemma 2.4, possibly worth keeping}
}

Before proving Theorem \ref{VWA_equiv}, we recall some standard notation for viewing Diophantine approximation via homogeneous dynamics:
Consider the unimodular lattice 
\[
\Lambda_\matrixA := \begin{pmatrix} 1&\matrixA \\ 0&1\end{pmatrix} \Z^{m+n}
\]
and the flow on the space of lattices given by 
\[
g_t := \begin{pmatrix} e^{t/m} I_m &0\\0&e^{-t/n} I_n\end{pmatrix}.
\]
\begin{lemma}\label{SimulMin}
 Let $A$ be an $m\times n$ matrix that is $\psi$-approximable\footnote{See the in-line definition at the start of Section \ref{Intro_Dio}} and $r$ be the function corresponding to $\psi$ in \cite[Theorem 8.3]{KleinbockMargulis}.
There is an unbounded sequence of times $t_k$ at which $t\mapsto e^{r(t)}\|g_t\Lambda_A\|$ has a local minimum smaller than 1.
Furthermore, the set $\{t_k\}_{k\in\N}$ is exactly the set of local minima of $t\mapsto \|g_t\Lambda_A\|$.
\end{lemma}

\begin{proof}
Consider the function $\Delta(t):=-\log(\min_{\xx\in g_t\Lambda_A\setminus\{\0\}}\|\xx\|)$.
For any particular $\xx\in \Lambda_A$, the function $t\mapsto \|g_t \xx\|$ has an interval on which it is proportional to $e^\frac{t}{m}$ and an interval on which it is proportional to $e^{-\frac{t}{n}}$, and the intersection point $t(\xx)$ of these intervals is a unique minimum for $t\mapsto \|g_t \xx\|$.
If $\xx\in\Lambda_A$ is fixed, then the set $\{t\geq 0: \|g_t\xx\|=e^{-\Delta(t)}\}$ is an interval containing $t(\xx)$. This is due to the fact that all vectors with lengths that are contracting have lengths proportional to $e^{-\frac{t}{n}}$, and all vectors with lengths that are expanding have lengths proportional to $e^{\frac{t}{m}}$. Since $g_t\Lambda_A$ is a lattice, the set of lengths of its vectors is discrete.
Therefore, $\Delta$ is piecewise linear, with regions of slope $-\frac{1}{m}$ alternating with regions of slope $\frac{1}{n}$.
By \cite[Theorem 8.5]{KleinbockMargulis}, there are arbitrarily large positive times $t_\ell$ at which $\Delta(t_\ell)\geq r(t_\ell)$.
Given one of these times, $t_\ell$, if it lies in an interval where $\Delta(t)=\Delta_0-\frac{t}{m}$, one may replace $t_\ell$ with the time of the previous maximum, say $t_k$.
This is because
\begin{equation*}
    m\Delta(t_k)+t_k=m\Delta_0=m\Delta(t_\ell)+t_\ell\geq mr(t_\ell)+t_\ell\geq mr(t_k)+t_k
\end{equation*}
(for the last inequality see \cite[Eq. (8.2b)]{KleinbockMargulis}).
Similarly, if $t_\ell$ lies in a region where $\Delta(t)=\Delta_0+\frac{t}{n}$, one may replace $t_\ell$ with the time of the next maximum, say $t_{k+1}$, since
\begin{equation*}
    n\Delta(t_{k+1})-t_{k+1}=n\Delta_0=n\Delta(t_\ell)-t_\ell\geq nr(t_\ell)-t_\ell\geq nr(t_{k+1})-t_{k+1}
\end{equation*}
(here the last inequality follows from \cite[Eq. (8.2a)]{KleinbockMargulis}).
So, the times at which $\Delta$ has a maximum are also where $r-\Delta$ has a minimum.
Exponentiating, one has that $\{t_k\}_{k\in\N}$ is an unbounded sequence of local minima of $t\mapsto e^{r(t)}\|g_{t}\Lambda_A\|$.
\end{proof}

In the proof of the theorem below, we will need to prove that a certain set is nonempty. We will do this by establishing something much stronger, namely that is a winning set for a modification of a game introduced by Schmidt in \cite{Schmidt1}. This modification, known as the \textit{hyperplane absolute game} \cite{BFKRW}, is played as follows (see the discussion in \cite{BFKRW} for a nice overview of the development of various modifications of Schmidt's original game). Fix a parameter $0 < \beta < \frac13$. Bob, the first player, chooses a point $\xx_1\in \R^m$ and a radius $\rho_1$, thus defining a closed ball $B(\xx_1, \rho_1) \subset \R^m$. Alice, the second player, then chooses an $(m-1)$-dimensional affine hyperplane $\LL_1$ and deletes an $\varepsilon_1\rho_1$-neighborhood of $\LL_1$ from Bob's ball $B(\xx_1,\rho_1)$, where $0<\varepsilon_1<\beta$. For convenience, we denote the $\varepsilon_1\rho_1$-neighborhood of $\LL_1$ by $A_1$. Bob then picks a point $\xx_2$ and a radius $\rho_2 \geq \beta \rho_1$ satisfying 
\[
B(\xx_2, \rho_2) \subset B(\xx_1, \rho_1) \setminus A_1.
\] Alice then chooses a new affine subspace $\LL_2$, and the game proceeds as before. A set $\SS$ is said to be $\beta$-\textit{hyperplane absolute winning}, written HAW, if Alice has a strategy such that, no matter how Bob plays, she can guarantee that the intersection point $\xx_\infty$ defined by 
\[
\{\xx_\infty\} := \bigcap_i B(\xx_i, \rho_i) \textup{ satisfies } \xx_\infty\in \SS.
\]
We need two important facts about hyperplane absolute winning sets, both proven in \cite[Prop. 2.3]{BFKRW}: (1) HAW sets have full Hausdorff dimension; and (2) the countable intersection of HAW sets is again HAW. 

\ignore{
By the lemma, the sequence of times Alice seeks exists.
Passing to a subsequence if necessary, Alice ensures that $t_{k+1}-t_k+r(t_k)-r(t_{k+1})>-2\log(\beta)$.
For each $t_k$, Alice chooses a vector of minimal length, $\rr_k\in g_{t_k}\Lambda_A$.
By construction, this vector lies along the line $\{y=x\}$ (or $\{y=-x\}$), and so is linearly independent of a vector $(y\,0)^T$ along the $x$-axis.
Let $w_k$ be the value of $y$ that makes $\mathbf{w}_k=(w_k\,0)^T$ such that $\det|\mathbf{r}_k\mathbf{w}_k|=1$.
\begin{equation*}
    |w_k|=\|\mathbf{w}_k\|\geq\frac{1}{\|\mathbf{r}_k\|}\geq e^{r(t_k)}=e^{\frac{\sigma-2}{\sigma}t_k}.
\end{equation*}
When Bob's ball has radius between $\beta^{-1}e^{-\frac{2}{\sigma}t_k}$ and $\beta^{-2}e^{-\frac{2}{\sigma}t_k}$,
Alice deletes an interval of radius $2C^{1-\frac{1}{\mu}}e^{-\frac{2}{\mu}t_k}$ around the integer multiple of $e^{-t_k}w_k$ closest to the center of Bob's interval (if there are two such points, she chooses the leftmost).
If Bob's ball has radius larger than the prescribed interval, Alice finds the smallest $k'\geq k$ such that the interval of radius $2C^{1-\frac{1}{\mu}}e^{-\frac{2}{\mu}t_{k'}}$ around a multiple of $e^{-t_{k'}} w_{k'}$ includes the center of Bob's ball and deletes this interval (if no such interval exists, she chooses an interval that does not intersect Bob's ball).
This concludes Alice's strategy.
The inequality $\mu<\sigma$ implies $e^{-\frac{2}{\mu}t_k}<e^{-\frac{2}{\sigma}t_k}$, so that the intervals Alice wishes to delete shrink faster than the distance between them.
The inequality $t_{k+1}-r(t_{k+1})-(t_k-r(t_k))=\frac{2}{\sigma}(t_{k+1}-t_k)>-2\log(\beta)$ ensures that Bob's ball will eventually be smaller than the distance between the intervals Alice considers for deletion at stage $k$.
So, eventually, there will be at most one interval that Alice wishes to delete that intersects Bob's ball.

It remains to show that Alice's strategy is a winning strategy.
Suppose $y\notin D_{C,\mu}$
At some sufficiently large stage $k$, one has that for $\mu>2$ there are $m,n\in\Z$ with $|n|<Q_k=C^{1-\frac{1}{\mu}}e^{\left(2-\frac{2}{\mu}\right)t_k}$ such that $|\matrixA n-m-y|< C Q_k^{-\frac{1}{\mu-1}}$.
Consider the point $\yy_k := g_{t_k}(y,0)^T = (e^{t_k}y, 0)^T$, and its distance to the lattice $\Lambda_k := g_{t_k} \Lambda_\matrixA$. 
\[
\dist(\yy_k, \Lambda_k) \leq \max\{e^{t_k}|n\matrixA-m-y|, e^{-t_k}|n|\} <\max\{e^{t_k}|n\matrixA-m-y|, e^{-t_k}C^{1-\frac{1}{\mu}}e^{\left(2-\frac{2}{\mu}\right)t_k}\} \leq C^{1-\frac{1}{\mu}}e^{\frac{\mu-2}{\mu}t_k}.
\]
Therefore
\[
\dist(\yy_k, \Lambda_k + \{y=x\})= \dist(\yy_k, \Z (w_k,0) + \{y=x\}) \leq C^{1-\frac{1}{\mu}}e^{\frac{\mu-2}{\mu}t_k}.
\]
By definition of the sup norm, there exists some integer $p$ and a point $(s,s)$ on the line $\{y=x\}$ for which 
\[
\max\{|e^{t_k}y - p w_k - s|,|s| \}\leq C^{1-\frac{1}{\mu}}e^{\frac{\mu-2}{\mu}t_k}.
\]
Therefore, by the above equation, the distance between $y$ and $e^{-t_k} p w_k$ is less than $2C^{1-\frac{1}{\mu}}e^{-\frac{2}{\mu}t_k}$, in which case Alice would have deleted the interval containing $y$, a contradiction.
Thus, the intersection of Bob's balls must lie in $D_C$, and Alice's strategy is a winning strategy.
Since varying $C$ or $\mu$ yield nested sets, one may reduce an intersection over these parameters to a countable one.
As the set $D_{C,\mu}$ is absolute winning, so is a countable intersection of such sets.
So,
\begin{equation*}
    D:=\left\{y\in\R:\forall\epsilon>0,\exists C>0,\forall Q_0\in\R^+\exists Q>Q_0\exists n,m\in\Z,|n|<Q,|\matrixA n-m-y|>\frac{C}{Q^{1-\epsilon}}\right\}
\end{equation*}
is absolute winning, and so nonempty, as was to be shown.
\end{proof}

}

\begin{theorem}\label{VWA_equiv}
Let $A \in \mathcal{M}_{m\times n}(\R)$. Then  $A$ is not very well approximable  if and only if for all $\varepsilon > 0$, there is a $c>0$, such that for all sufficiently large $Q$ and all $\bfgamma$, there exist $\qq\in\Z^n$, $\|\qq\| < Q$ and $\pp\in \Z^m$ satisfying
\begin{equation}\label{VWA_equiv_eq}
\|A\qq - \pp - \boldsymbol\gamma\| < \frac{c}{Q^{\frac{n}{m}-\epsilon}}.
\end{equation}
\end{theorem}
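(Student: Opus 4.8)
The plan is to route both directions through the Dani correspondence, exactly as in the proof of Theorem~\ref{BA_equiv}, the one change being that ``not very well approximable'' yields a \emph{slowly growing} (rather than bounded) upper bound on the excursions of the orbit $t\mapsto g_t\Lambda_A$. For $\sigma>1$ write $r_\sigma$ for the function attached by \cite[Theorem~8.3]{KleinbockMargulis} to the approximating function $q\mapsto q^{-(n/m)\sigma}$; it is eventually linear with slope $s(\sigma)>0$, with $s(\sigma)\to 0$ as $\sigma\to 1^{+}$, and $A$ is $\sigma$-very well approximable precisely when $\Delta(t):=-\log\min_{\xx\in g_t\Lambda_A\setminus\{\0\}}\|\xx\|$ satisfies $\Delta(t)\geq r_\sigma(t)$ for an unbounded set of times $t$ (the same being true for the dual orbit, since the very-well-approximable property is transference invariant). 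Thus $A$ is not very well approximable exactly when, for every $\sigma>1$, there is a $T_0$ with $\lambda_1(g_t\Lambda_A)>e^{-r_\sigma(t)}$ for all $t\geq T_0$.

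\emph{Forward direction.} Fix $\varepsilon>0$ and choose $\sigma>1$ with $s=s(\sigma)$ small. By unimodularity and Minkowski's second theorem $\lambda_1(g_t\Lambda_A)\cdots\lambda_{m+n}(g_t\Lambda_A)\asymp 1$, so the lower bound on $\lambda_1$ forces $\lambda_{m+n}(g_t\Lambda_A)\ll e^{(m+n-1)r_\sigma(t)}$; since the covering radius of a lattice is at most $\tfrac12\sum_i\lambda_i\leq\tfrac{m+n}{2}\lambda_{m+n}$, every point of $\R^{m+n}$ lies within $\ll e^{(m+n-1)r_\sigma(t)}$ of $g_t\Lambda_A$ once $t\geq T_0$. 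Given $Q$ large enough that the time $t$ chosen next exceeds $T_0$, and any $\boldsymbol\gamma$, pick $t\asymp\log Q$ with $e^{t/n}e^{(m+n-1)r_\sigma(t)}\asymp Q$ and apply the covering estimate to the affine point $g_t\begin{pmatrix}\boldsymbol\gamma \\ \0\end{pmatrix}$: there are $\pp\in\Z^m$, $\qq\in\Z^n$ with $\max\{e^{t/m}\|A\qq+\pp-\boldsymbol\gamma\|,\ e^{-t/n}\|\qq\|\}\ll e^{(m+n-1)r_\sigma(t)}$. The second block gives $\|\qq\|<Q$, and the first gives (after replacing $\pp$ by $-\pp$)
\[
\|A\qq-\pp-\boldsymbol\gamma\|\ \ll\ e^{-t/m}\,e^{(m+n-1)r_\sigma(t)}\ =\ Q^{-n/m}\bigl(e^{(m+n-1)r_\sigma(t)}\bigr)^{1+n/m}\ \ll\ Q^{\varepsilon-n/m},
\]
where the resulting exponent tends to $0$ with $s$; choosing $\sigma$ so this exponent equals the prescribed $\varepsilon$ and absorbing all constants into $c=c(\varepsilon)$ yields \eqref{VWA_equiv_eq}.

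\emph{Reverse direction.} Arguing by contraposition, suppose $A$ is $\sigma$-very well approximable for some $\sigma>1$; we produce, for a suitable $\varepsilon=\varepsilon(\sigma)>0$, an uncountable set of $\boldsymbol\gamma$ for which no $c$ works. By Lemma~\ref{SimulMin} applied to the dual orbit there is an unbounded sequence $t_k$ --- the local minima of the shortest-length function of the dual orbit --- at which that orbit has a nonzero vector $\rr_k$ with $\|\rr_k\|<e^{-r_\sigma(t_k)}$; passing to a subsequence, we may assume $t_{k+1}-t_k+r_\sigma(t_k)-r_\sigma(t_{k+1})>-2\log\beta$ for a fixed $0<\beta<\tfrac13$. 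As in the commented argument for Theorem~\ref{BA_equiv}, $\rr_k$ cuts $\R^m$ into a family of parallel affine hyperplanes $H_{k,i}$ carrying all of $g_{t_k}\Lambda_A$ whose consecutive spacing is $\gg e^{r_\sigma(t_k)}$; once rescaled back to the $\boldsymbol\gamma$-coordinate these sit at scale $e^{-t_k/m}$ and are precisely the locus of points far from $g_{t_k}\Lambda_A$. One then checks that the set $D_\varepsilon$ of $\boldsymbol\gamma\in\R^m$ such that for every $c>0$ there are arbitrarily large $Q$ with $\|A\qq-\pp-\boldsymbol\gamma\|\geq c\,Q^{\varepsilon-n/m}$ for all $(\pp,\qq)\in\Z^{m+n}$ with $\|\qq\|<Q$ is hyperplane absolutely winning: Alice deletes, whenever Bob's ball has radius comparable to the prescribed power of $e^{-t_k}$, a thin neighborhood of whichever hyperplane $e^{-t_k/m}H_{k,i}$ meets it (there is at most one, by the spacing estimate and the gap condition), and plays arbitrarily otherwise. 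If the resulting $\boldsymbol\gamma$ satisfied \eqref{VWA_equiv_eq} at $Q=Q_k\asymp e^{t_k/n}e^{(m+n-1)r_\sigma(t_k)}$, then translating by $g_{t_k}$ as in the forward direction would place $g_{t_k}\begin{pmatrix}\boldsymbol\gamma \\ \0\end{pmatrix}$ within $O\bigl(e^{(m+n-1)r_\sigma(t_k)}\bigr)$ of $g_{t_k}\Lambda_A$, hence of some $H_{k,i}$, forcing $\boldsymbol\gamma$ into a neighborhood Alice deleted --- a contradiction. Writing $D_\varepsilon$ as the countable intersection over $c\in\N$ of the analogously defined (individually hyperplane absolutely winning) sets and invoking that such sets have full Hausdorff dimension and are closed under countable intersection \cite[Prop.~2.3]{BFKRW}, $D_\varepsilon$ is nonempty, and any $\boldsymbol\gamma\in D_\varepsilon$ shows that \eqref{VWA_equiv_eq} fails for that $\varepsilon$.

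The step I expect to be the main obstacle is the reverse direction: making the quantitative bookkeeping of the winning-game argument cohere --- the rate at which Alice's deleted neighborhoods must shrink relative to the hyperplane spacing $\asymp e^{r_\sigma(t_k)}$, the subsequence gap condition, and the precise value of $\varepsilon(\sigma)$ --- so that a point surviving all of Alice's moves genuinely lies in $D_\varepsilon$. By contrast, the forward direction is a routine marriage of Minkowski's second theorem with the Kleinbock--Margulis dictionary packaged in Lemma~\ref{SimulMin}.
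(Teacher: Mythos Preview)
Your outline matches the paper's in all essentials: Dani correspondence plus a covering-radius bound for the forward direction, and the hyperplane absolute game along the dual orbit for the reverse. One minor difference in the forward direction: you bound $\lambda_{m+n}(g_t\Lambda_A)$ via Minkowski's second theorem on the primal lattice, picking up a factor $(m+n-1)$ in the exponent; the paper instead passes to $A^T$ (not VWA by transference), applies the Kleinbock--Margulis dictionary to the dual lattice $(g_t\Lambda_A)^*$, and uses Mahler's relation $\lambda_{m+n}(\Lambda)\asymp 1/\lambda_1(\Lambda^*)$ to get $\lambda_{m+n}\ll e^{r(t)}$ with no extra factor. Either suffices forward, since the loss is absorbed into $\varepsilon$.

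That extra factor does, however, break your reverse direction as written. You carry it into the choice $Q_k\asymp e^{t_k/n}e^{(m+n-1)r_\sigma(t_k)}$, which makes the claimed lattice distance in the $g_{t_k}$ frame of order $e^{(m+n-1)r_\sigma(t_k)}$; but the hyperplane spacing coming from $\rr_k$ is only $\asymp e^{r_\sigma(t_k)}$. For $m+n\geq 2$ Alice's neighborhood then engulfs many hyperplanes and the ``at most one $H_{k,i}$ meets Bob's ball'' step fails. This is exactly the bookkeeping obstacle you anticipated, and the fix is not just cosmetic: the paper introduces a second parameter $\mu$ with $\tfrac{m}{n}<\mu<\sigma$ (concretely $\mu=\tfrac12(\tfrac{m}{n}+\sigma)$), takes $\varepsilon=\tfrac{n}{m}-\tfrac1\mu$, and scales $Q_k$ so that the $g_{t_k}$-frame distance is $\asymp e^{\theta_\mu t_k}$ while the spacing is $>e^{\theta_\sigma t_k}$. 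Since $\theta_\mu<\theta_\sigma$, the deleted neighborhoods shrink faster than the gaps and Alice's strategy is legal. One then intersects the resulting HAW sets $D_{c,\mu}$ over rational $c$ and a sequence $\mu_j\searrow\tfrac{m}{n}$ to get the full set $D$. In short: decouple the game parameter from the VWA exponent $\sigma$, and your sketch goes through.
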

\begin{proof}
Since $A$ is not very well approximable, neither is its transpose $A^T$.
As before, we define $A^T$ to be \emph{$\nu$-VWA} if the Diophantine exponent $\sigma\left(A^T\right)$ satisfies $\sigma\left(A^T\right)>\nu>\frac mn$ (this is the same as in Definition \ref{BA_VWA}, but the dimensions are reversed).
Fix $\sigma>\frac{m}{n}$. As $A^T$ is not $\sigma-$VWA, 
 it is in particular not $\sigma$-approximable, and therefore by \cite[Theorem 8.3]{KleinbockMargulis} we have $\Delta((g_t\Lambda_A)^*)<r(t)$ for all $t$ sufficiently large, where $r(t)=\theta_\sigma t := \frac{n\sigma-m}{(\sigma+1)mn}\ t$ and $\Lambda^* = \{\ss\in\R^d : \rr\cdot\ss \in \Z \;\forall \rr\in \Lambda\}$ is the polar body of $\Lambda$. By the definition of $\Delta$ given in Lemma \ref{SimulMin} above, this means that $\min_{\xx\in (g_t\Lambda_A)^* \butnot \{\0\}}(e^{r(t)}\|\xx\|)>1$, i.e. that $\lambda_1((g_t\Lambda_A)^*)>e^{-r(t)}$, where $\lambda_i(\Lambda)$ denotes the $i$th Minkowski minimum of the lattice $\Lambda$.
By a theorem of Mahler \cite[end of section 3]{Mahler_Transference} $\lambda_{m+n}(g_t\Lambda_A)\asymp\frac{1}{\lambda_1((g_t\Lambda_A)^*)}$.
This gives us an upper bound: $\lambda_{m+n}(g_t\Lambda_A)\asymp\frac{1}{\lambda_1((g_t\Lambda_A)^*)}<e^{r(t)}=e^{\theta_\sigma t}$, and thus on the codiameter of $g_t \Lambda_A$ (as the codiameter is $\asymp$ to $\lambda_{m+n}$).  That is, there is a uniform constant $c_2>0$ such that the codiameter is no larger than $ c_2 e^{\theta_\sigma t}$. Then there exists $(\pp,\qq)\in\Z^{m+n}$ such that
\[
\|g_t(A\qq - \pp,\qq) - g_t(\boldsymbol\gamma,\mathbf 0)\| < c_2 e^{\theta_\sigma t}.
\]
Fix $Q$ large enough, and set
\begin{equation*}
    t=\frac{1}{\frac{1}{n}+\theta_\sigma }\log\left(\frac{Q}{c_2}\right).
\end{equation*}
Then one has that
\begin{align*}
    e^{-\frac{t}{n}}\|\qq\|&<c_2e^{\theta_\sigma t}\nonumber\\
    \|\qq\|&<c_2e^{\left[\theta_\sigma +\frac{1}{n}\right]t}\nonumber\\
    \|\qq\|&<Q.
\end{align*}
Similarly,
\begin{align*}
    e^{\frac{t}{m}}\|A\qq-\pp-\boldsymbol\gamma\|&<c_2e^{\theta_\sigma t}\nonumber\\
    \|A\qq-\pp-\boldsymbol\gamma\|&<c_2e^{\left[\theta_\sigma -\frac{1}{m}\right]t}\nonumber\\
    \|A\qq-\pp-\boldsymbol\gamma\|&<cQ^{\left[\theta_\sigma -\frac{1}{m}\right]/\left[\frac{1}{n}+\theta_\sigma \right]}\nonumber\\
    \|A\qq-\pp-\boldsymbol\gamma\|&<\frac{c}{Q^{\frac{n}{m}-\varepsilon}},
\end{align*}
where $\varepsilon=\frac nm-\frac 1\sigma$ and $c=c_2^{1+\frac nm-\varepsilon}$.

We will prove the reverse direction by contraposition.  Assume that $\matrixA^T$ is $\sigma$-VWA for some $\sigma > \frac{m}{n}$.  Define $\mu=\frac{1}{2}(\frac{m}{n}+\sigma)$, $\varepsilon(\mu)=\frac nm-\frac1\mu$, $\theta_\sigma=\frac{n\sigma-m}{nm(\sigma+1)}$, and $\theta_\mu=\frac{n\mu-m}{nm(\mu+1)}$. Note that $\frac mn<\mu<\sigma$, $\epsilon > 0$, and $0<\theta_\mu<\theta_\sigma$.
Consider the set
\begin{equation*}
  D_{c,\mu}:=\left\{\bfgamma\in\R^m:\forall Q_0>0\;\exists Q>Q_0\;\forall\pp\in\Z^m\;\qq\in\Z^n,\,\|A\qq-\pp-\bfgamma\|>\frac{c}{Q^{\frac{n}{m}-\epsilon(\mu)}}\right\}.
\end{equation*}
We will show that this set is hyperplane absolute winning.  

To show $D_{c,\mu}$ is hyperplane absolute winning, we exhibit a winning strategy for Alice (player II) in the hyperplane absolute game on $\R^m$ with $D_{c,\mu}$ as her target set.
Alice begins by finding the sequence of times $\{t_k\}_{k\in\N}$ at which $\|(g_t\Lambda_A)^*\|$ and $e^{\theta_\sigma t}\|(g_t\Lambda_A)^*\|$ are simultaneously locally minimized, which is possible by Lemma \ref{SimulMin}. 
Passing to a subsequence if necessary, Alice ensures that $\left(\theta_\mu-\frac{1}{m}\right)(t_{k+1}-t_k)=-\frac{n+m}{nm(\mu+1)}\left(t_{k+1}-t_k\right)<2\log(\beta)$. 
For each $k$, Alice chooses a vector in $(g_{t_k} \Lambda_A)^* \butnot\{\0\}$ of minimal length, say
$\mathbf{r}_k=(\mathbf{a}_k,\mathbf{b}_k)$.  Since $t_k$ is a local minimum of $t\mapsto e^{r(t)} \|g_t \Lambda_A\|$, we have $\|\mathbf{r}_k\|=\|\mathbf{a}_k\|=\|\mathbf{b}_k\|<e^{-\theta_\sigma t_k}$. 

Recall that $(g_{t_k} \Lambda_\matrixA)^* = ((g_{t_k} u_\matrixA)^T)^{-1} \Z^{m+n}$. Thus for all $(\cc,\dd)\in g_{t_k} \Lambda_\matrixA$ we have $\aa_k\cdot\cc + \bb_k\cdot\dd \in \Z$. Now for each $i\in\Z$ let $H_{k,i}' = \{(\cc,\dd)\in\R^{m+n} : \aa_k\cdot\cc + \bb_k\cdot\dd = i\}$ and $H_{k,i} = \{\cc\in \R^m : (\cc,\0) \in H_{k,i}'\}$. Note that the distance between $H_{k,i}$ and $H_{k,i+1}$ is $1/\|\aa_k\|>e^{\theta_\sigma t_k}>e^{\theta_\mu t_k}$ 

Let $\theta_\mu = \frac{n\mu - m}{mn(\mu+1)} = \frac{\varepsilon}{n(1 + (n/m) - \varepsilon)} > 0$. When Bob's ball has radius between $2c\exp\left(-\left(\frac1m - \theta_\mu\right)t_k\right)\beta^{-1}$ and $2c\exp\left(-\left(\frac1m - \theta_\mu\right)t_k\right)\beta^{-2}$,
Alice deletes a hyperplane neighborhood of radius $2c\exp\left(-\left(\frac1m - \theta_\mu\right)t_k\right)$ around $e^{-\frac{t_k}{m}}H_{k,i}$, where $i$ is chosen so that $H_{k,i}$ is the closest hyperplane to the center of Bob's ball (if there are two such hyperplanes, she chooses the smaller index).
If Bob's ball has radius larger than the prescribed interval, Alice finds the smallest $k'\geq k$ such that the hyperplane neighborhood of radius $2c\exp\left(-\left(\frac1m - \theta_\mu\right)t_k\right)$ around a multiple of $e^{-\frac{t_k}{m}}H_{k,i}$ includes the center of Bob's ball and deletes this hyperplane neighborhood (if no such hyperplane neighborhood exists, she chooses a hyperplane neighborhood that does not intersect Bob's ball).
This concludes Alice's strategy.
Since $\exp(\theta_\mu t_k)<\exp\left(\theta_\sigma t_k\right)<\text{dist}(H_{k,i},H_{k,i+1})$ the hyperplane neighborhoods Alice wishes to delete shrink faster than the distance between them.
The inequality $-\frac{n+m}{nm(\mu+1)}\left(t_{k+1}-t_k\right)<2\log(\beta)$ ensures that Bob's ball will eventually be smaller than the distance between the hyperplane neighborhoods Alice considers for deletion at stage $k$ (and that it need not shrink too quickly).
So, eventually, there will be at most one hyperplane neighborhood that Alice wishes to delete that intersects Bob's ball.
It remains to show that Alice's strategy is a winning strategy.
Suppose $\bfgamma\notin D_{c,\mu}$.
At some sufficiently large stage $k$, one has that for $\mu>\frac{m}{n}$ there are $\mathbf{q}_k\in\Z^n$, $\mathbf{p}_k\in\Z^m$ with $\|\qq_k\|<Q_k=ce^{(\theta_\mu+\frac1n)t_k}$ such that $\|A \qq_k - \pp_k - \bfgamma\|< c Q_k^{-(\frac{n}{m}-\epsilon(\mu))}$. Consider the point $\yy_k := g_{t_k}(\zz_k,0)^T = (e^\frac{t_k}{m}\zz_k, 0)^T$, and its distance to the lattice $\Lambda_k := g_{t_k} \Lambda_\matrixA$. 
\[
\dist(\yy_k, \Lambda_k) \leq \max\{e^\frac{t_k}{m}\|A\qq_k-\pp_k-(\zz_k,\0)\|, e^{-\frac{t_k}{n}}\|\qq_k\|\} < c\exp(\theta_\mu t_k).
\]
Therefore
\[
\dist(\yy_k,H_{k,i}') < c\exp(\theta_\mu t_k)\text{ for some $i\in\Z$.}
\]
Say for example $\dist(\yy_k,(\cc,\dd)) < c\exp(\theta_\mu t_k)$ for some $(\cc,\dd)\in H_{k,i}'$. Then $\aa_k\cdot\cc + \bb_k\cdot\dd = i$, so $\aa_k\cdot (\cc + \frac{\bb_k\cdot\dd}{\|\aa_k\|^2}\aa_k) = i$, i.e., $\cc + \frac{\bb_k\cdot\dd}{\|\aa_k\|^2}\aa_k \in H_{k,i}$. Since $\|\aa_k\| = \|\bb_k\|$, we have $\|\frac{\bb_k\cdot\dd}{\|\aa_k\|^2}\aa_k\| \leq \|\dd\| < c\exp(\theta_\mu t_k)$. So $\dist(\cc,H_{k,i}) < c\exp(\theta_\mu t_k)$ and thus
\[
\dist(e^{t_k/m}\zz_k,H_{k,i}) < 2c\exp(\theta_\mu t_k).
\]
Therefore, by the above equation, the distance between $\zz_k$ and $e^{-t_k/m}H_{k,i}$ is less than $2c\exp\left(\left(\theta_\mu-\frac{1}{m}\right)t_k\right)$, in which case Alice would have deleted the hyperplane-neighborhood $N\left(e^{-t_k/m}H_{k,i},2c\exp\left(\left(\theta_\mu-\frac{1}{m}\right)t_k\right)\right)$ containing $\zz_k$, a contradiction.
Thus, the intersection of Bob's balls must lie in $D_{c,\mu}$, and Alice's strategy is a winning strategy. So $D_{c,\mu}$ is HAW. 
Since we can write 
\begin{align*}
    D &:=\Big\{\zz\in\R^m:\forall\epsilon>0\;\forall C>0\;\forall Q_0\in\R^+\;\exists Q>Q_0\;\exists \qq,m\in\Z,|n|<Q,|A \qq-\pp-\zz|>\frac{C}{Q^{\frac{n}{m}-\epsilon}}\Big\}\\ &= \bigcap_{c \in \Q} \bigcap_{\mu_j \searrow \frac{m}{n}} D_{c,\mu},
\end{align*}
where we choose a countable sequence $\mu_j$ which tends to $\frac{m}{n}$, we see that $D$
is also HAW, and so nonempty, as was to be shown.
\end{proof}

\ignore{
\begin{proof}
Alternative reverse:
Assume for every $\epsilon>0$, for every $Q$ sufficiently large, and all $\boldsymbol\gamma$ there is an$\qq\in\Z^n$ and an $\pp\in\Z^m$ such that
\[
\|\qq\|<Q-1
\]
and
\[
\|A\qq - \pp -\boldsymbol\gamma\| < \frac{2^\frac{(m+n)^2}{m}}{Q^{\frac{n}{m}-\epsilon}}.
\]
Define
\[
t=\frac{mn}{m+n}\log\left(\frac{Q^{1+\frac{n}{m}-\epsilon}}{2^\frac{(m+n)^2}{m}}\right)
\]
or
\[
Q=\left(2^\frac{(m+n)^2}{m}e^{\left(\frac{1}{m}+\frac{1}{n}\right)t}\right)^\frac{1}{1+\frac{n}{m}-\epsilon}.
\]
This implies that
\begin{align*}
\max\left(e^\frac{t}{m}\|A\qq - \pp -\boldsymbol\gamma\|,e^{-\frac{t}{n}}\|\qq  + \mathbf\matrixA\|\right)
&\leq \max\left(e^\frac{t}{m}\frac{2^\frac{(m+n)^2}{m}}{Q^{\frac{n}{m}-\epsilon}},e^{-\frac{t}{n}}Q\right)\\
&= 2^\frac{m+n}{1-\frac{m}{m+n}\epsilon}e^\frac{\epsilon t}{n(1+\frac{n}{m}-\epsilon)}.
\end{align*}
So that the RHS of this equation is an upper bound on the codiameter of the lattice $g_t \Lambda_A$, and thus by Lemma \ref{lemmacodiamest} we have
\[
\lambda_{m+n}\leq 2\cdot 2^\frac{m+n}{1-\frac{m}{m+n}\epsilon}e^\frac{\epsilon t}{n(1+\frac{n}{m}-\epsilon)}.
\]
By Minkowski's theorem, $\frac{2^{m+n}}{(m+n)!}\leq\lambda_1\cdots\lambda_{m+n}$.
Therefore,
\[
\lambda_1\geq \frac{2^{m+n}}{(m+n)!\lambda_{m+n}^{m+n-1}}\geq \frac{1}{2^{d-1} (m+n)!}2^{-(m+n)\left(\frac{m+n-1}{1-\frac{m}{m+n}\epsilon}\right)}e^{-\frac{\epsilon (m+n-1) t}{n(1+\frac{n}{m}-\epsilon)}}.
\]
So that for all $t>t_0(\epsilon)$,
\[
\Delta(g_t\Lambda_A)\leq \frac{\epsilon (m+n-1) t}{n(1+\frac{n}{m}-\epsilon)} +(m+n)\left(\frac{m+n-1}{1-\frac{m}{m+n}\epsilon}\right)\log(2)+\log(2^{d-1} (m+n)!).
\]
Thus, one has that
\[
\limsup_{t\to\infty}\frac{\Delta(g_t\Lambda_A)}{t}=0.
\]
By the Dani correspondence theorem, this implies that $A$ is not very well approximable.
\end{proof}

\begin{lemma}
\label{lemmacodiamest}
Let $\Lambda$ be a lattice in $\R^d$. Then the codiameter of $\Lambda$ is $\geq (1/2) \lambda_d(\Lambda)$.
\end{lemma}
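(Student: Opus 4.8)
The plan is to reduce the statement to producing a single point that is far from the lattice. First recall that the codiameter of $\Lambda$ is the diameter of the compact quotient $\R^d/\Lambda$, and this equals $\sup_{x\in\R^d}\dist(x,\Lambda)$; in particular it is at least $\dist(x,\Lambda)$ for every $x\in\R^d$. So it suffices to exhibit one point $x$ with $\dist(x,\Lambda)\geq\tfrac12\lambda_d(\Lambda)$.

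The candidate is the obvious one: fix linearly independent lattice vectors $b_1,\dots,b_d\in\Lambda$ realizing the successive minima, so $\|b_i\|=\lambda_i(\Lambda)$, and set $x:=\tfrac12 b_d$. (Taking the lattice point $0$ shows $\dist(x,\Lambda)\leq\|x\|=\tfrac12\lambda_d$, so the bound we prove is actually an equality for this $x$.) Now suppose for contradiction that some $v\in\Lambda$ satisfies $\|x-v\|<\tfrac12\lambda_d$. Scaling by $2$ gives $\|b_d-2v\|<\lambda_d$, and the triangle inequality together with $\|x\|=\tfrac12\lambda_d$ gives $\|v\|\leq\|x-v\|+\|x\|<\lambda_d$; thus both $v$ and $b_d-2v$ are lattice vectors of norm strictly less than $\lambda_d$.

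To extract a contradiction I would work with the subspace $U:=\Span_\R\{u\in\Lambda:\|u\|<\lambda_d\}$. Let $j$ be the number of indices $i$ with $\lambda_i(\Lambda)<\lambda_d(\Lambda)$, so $0\leq j\leq d-1$ and $\lambda_{j+1}=\dots=\lambda_d$. The definition of the successive minima pins down $\dim U=j$: the independent vectors $b_1,\dots,b_j$ lie in $U$, so $\dim U\geq j$, while if $\dim U\geq j+1$ one could thin the (finite, by discreteness of $\Lambda$) generating set $\{u\in\Lambda:\|u\|<\lambda_d\}$ to $j+1$ independent vectors all of norm $<\lambda_d$, forcing $\lambda_{j+1}<\lambda_d$, a contradiction. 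But $v\in U$ and $b_d-2v\in U$, hence $b_d=(b_d-2v)+2v\in U$; together with $b_1,\dots,b_j\in U$ this produces the $j+1$ independent vectors $b_1,\dots,b_j,b_d$ inside $U$, contradicting $\dim U=j$. Hence $\dist(x,\Lambda)\geq\tfrac12\lambda_d$, which is what we wanted.

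The only delicate point — and essentially the sole reason the argument is routed through the subspace $U$ rather than simply swapping one member of a shortest independent system for $v$ or $b_d-2v$ — is the possibility of coincident minima $\lambda_{d-1}=\lambda_d$ (or longer runs of equal minima), where a naive comparison of norms fails to yield the needed strict inequality. I expect that bookkeeping to be the main obstacle; everything else reduces to the triangle inequality.
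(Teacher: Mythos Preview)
Your proof is correct. Both you and the paper halve a lattice vector and bound its distance to $\Lambda$ from below, but the chosen vector and the key step differ. The paper picks $\rr_1,\dots,\rr_{d-1}$ realizing the first $d-1$ minima and takes $\rr$ to be a representative of a \emph{generator} of $\Lambda/(\Lambda\cap\Span(\rr_1,\dots,\rr_{d-1}))\cong\Z$; then for any $\rr'\in\Lambda$ the image of $\rr-2\rr'$ in that quotient is odd, hence nonzero, so $\rr-2\rr'$ is independent of $\rr_1,\dots,\rr_{d-1}$ and therefore has norm $\geq\lambda_d$. You instead take the obvious candidate $b_d/2$ and add the observation $\|v\|<\lambda_d$, which together with $\|b_d-2v\|<\lambda_d$ forces $b_d$ into the short-vector subspace $U$. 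The paper's parity trick sidesteps the need to bound $\|v\|$ and works with a slightly more flexible choice of half-point (which need not realize any minimum), while your route uses the natural $b_d/2$ and makes the handling of coincident minima $\lambda_{d-1}=\lambda_d$ fully explicit via $U$; the paper's final line ``it follows from the definition of $\lambda_d$'' tacitly relies on the same fact you spell out, namely that every lattice vector of norm $<\lambda_d$ lies in $\Span(\rr_1,\dots,\rr_{d-1})$.
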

\begin{proof}
Let $\rr_1,\ldots,\rr_{d-1}\in\Lambda$ be a linearly independent sequence so that $\|\rr_i\| \leq \lambda_i(\Lambda)$ for all $i$, and let $\rr\in\Lambda$ be a representative of a generator of $\Lambda / \sum_1^{d-1} \R\rr_i$. Then by definition, the codiameter of $\Lambda$ is $\geq \dist(\rr/2,\Lambda)$. Fix $\rr'\in\Lambda$ such that $\dist(\rr/2,\Lambda) = \|\rr/2 - \rr'\| = (1/2) \|\rr-2\rr'\|$. Now $\rr-2\rr'\in\Lambda\butnot \sum_1^{d-1} \R\rr_i$; it follows from the definition of $\lambda_d(\Lambda)$ that $\|\rr-2\rr'\|\geq \lambda_d(\Lambda)$.
\end{proof}
}

\begin{remark}
As stated, one can recover both Theorems \ref{BA_equiv} and \ref{VWA_equiv} from \cite{MoshchevitinNeckrasov}. In the case of Theorem \ref{VWA_equiv}, applying their Theorem 1A with $f(T) = \frac1{T^{1 + \varepsilon}}$, establishes that if a matrix is not VWA, then the set of $\gamma$ which satisfy Equation (\ref{VWA_equiv_eq}) is all of $\R^m$. For the converse direction, it is shown in their Theorem 1B, using the same function $f$, that if the matrix were VWA, then the set of $\gamma$ for which Equation (\ref{VWA_equiv_eq}) does \textit{not} hold is of full measure. For the proof of Theorem \ref{VWA_equiv}, it is enough that when $A$ is VWA that this set is simply nonempty, although our proof shows that this set is hyperplane absolute winning.
\end{remark}

\subsection{Proof of Theorem \ref{theorem_main}}\label{S:proof}
Both of the above theorems are strictly about properties of a matrix and solutions to some inhomogeneous approximation problem in $\R^m$. We now need to connect this back to a statement about approximation on the abelian variety itself. This section is largely taken from \cite{Waldschmidt}, but we reproduce the pertinent facts here for ease of the reader. 

Let $\mathcal{A}$ be an abelian variety defined over a number field $K \subset \R$, and let $T_\mathcal{A}(\R)$ denote the tangent space at the identity. In this case, $\mathcal{A}(\R)$ is a real lie group, and we let 
\[
\exp_\mathcal{A}: T_\mathcal{A}(\R) \to \mathcal{A}(\R)
\]
denote the usual exponential map. The image of this map is precisely $\mathcal{A}(\R)^0$, and the kernel $\Omega_{\mathcal{A}, \R}$ is a discrete subgroup of $T_\mathcal{A}(\R)$ of rank $g$ over $\Z$. We fix a $\Z$-basis $\boldsymbol\gamma_1, ..., \boldsymbol\gamma_g$ of $\Omega_{\mathcal{A},\R}$. 

For each $1\leq j \leq r$, let $\boldsymbol\alpha_j \in T_\mathcal{A}(\R)$ satisfy $\exp_\mathcal{A}(\boldsymbol\alpha_j) = \beta_j$, where $\{\beta_1, ....,\beta_r\}$ denote a (maximal) $\Z$-linearly independent set in $\mathcal{A}(K) \cap \mathcal{A}(\R)^0$. 

The following result connects distances on the abelian variety to distances in $\R^g$: 
\begin{lemma}[cf. {\cite[Lemma 5.1]{Waldschmidt}}]\label{Wald_equiv} 
Let $\theta$ be a positive number and let $\Gamma = \Z \beta_1 + \Z\beta_2 + ... + \Z \beta_r \subset \mathcal{A}(K) \cap \mathcal{A}(\R)^0$. The the following are equivalent: 
\begin{enumerate}
\item\label{Wald_variety} There exists a constant $C> 0$ such that, for any $Q \geq 1$ and any $\zeta \in \mathcal{A}(\R)^0$, there exists $\beta \in \Gamma$ with $\hat{h}(\beta) < Q$ and 
\[
\dist(\beta, \zeta) \leq C Q^{-\theta/2}.
\]
\item There exists a constant $C>0$ such that for any positive integer $Q$ and any $\boldsymbol\xi \in T_\mathcal{A}(\R)$, there exist rational integers $q_1,....,q_r$ and $p_1,...,p_g$ satisfying 
\begin{equation}\label{Wald_eq}
    \displaystyle\max_{1 \leq j \leq r}|q_j| \leq Q, \quad \text{and} \quad \left\| q_1 \boldsymbol\alpha_1 + ... + q_r \boldsymbol\alpha_r + p_1 \boldsymbol\gamma_1 + ... + p_g \boldsymbol\gamma_g + \boldsymbol\xi\right\| \leq C Q^{-\theta}.
\end{equation}
\end{enumerate}
\end{lemma}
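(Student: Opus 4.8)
The plan is to establish the equivalence by carefully unwinding what each statement says about the exponential map $\exp_{\mathcal{A}}$, and exploiting that $\exp_{\mathcal{A}}$ is a local diffeomorphism which is $\Omega_{\mathcal{A},\R}$-periodic, so that it restricts to a covering map $T_{\mathcal{A}}(\R) \to \mathcal{A}(\R)^0$ with deck group $\Omega_{\mathcal{A},\R}$ of rank $g$. The Riemannian metric on $\mathcal{A}(\R)^0$ can be taken to be the pushforward of (a translation-invariant metric equivalent to) the Euclidean metric on $T_{\mathcal{A}}(\R) \cong \R^g$; since $\exp_{\mathcal{A}}$ is a local isometry for such a choice, for points that are close together we have $\dist(\exp_{\mathcal{A}}(\uu), \exp_{\mathcal{A}}(\vv)) \asymp \min_{\boldsymbol\omega \in \Omega_{\mathcal{A},\R}} \|\uu - \vv + \boldsymbol\omega\|$, with the comparison constants absolute (and for points far apart both sides are bounded below and above, so the statement is really only about small distances, which is the regime that matters once $Q$ is large and $C Q^{-\theta/2}$ is small).

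First I would fix $\boldsymbol\xi \in T_{\mathcal{A}}(\R)$ and set $\zeta = \exp_{\mathcal{A}}(-\boldsymbol\xi) \in \mathcal{A}(\R)^0$ (the sign is chosen to match the inhomogeneous shift; one checks which sign is forced by the additivity $\exp_{\mathcal{A}}(\uu + \vv) = \exp_{\mathcal{A}}(\uu) + \exp_{\mathcal{A}}(\vv)$). An element $\beta \in \Gamma$ of the form $\beta = q_1 \beta_1 + \cdots + q_r \beta_r$ satisfies $\beta = \exp_{\mathcal{A}}(q_1 \boldsymbol\alpha_1 + \cdots + q_r \boldsymbol\alpha_r)$ by additivity, so $\dist(\beta, \zeta)$ is comparable to the distance from $q_1 \boldsymbol\alpha_1 + \cdots + q_r\boldsymbol\alpha_r + \boldsymbol\xi$ to the lattice $\Omega_{\mathcal{A},\R} = \Z\boldsymbol\gamma_1 + \cdots + \Z\boldsymbol\gamma_g$, i.e.\ to $\min_{p_1,\dots,p_g \in \Z} \| q_1\boldsymbol\alpha_1 + \cdots + q_r\boldsymbol\alpha_r + p_1\boldsymbol\gamma_1 + \cdots + p_g\boldsymbol\gamma_g + \boldsymbol\xi\|$. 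This immediately translates the norm inequality in \eqref{eq:Wald_eq} into the distance inequality in \eqref{eq:Wald_variety} and vice versa, up to adjusting $C$ by the (absolute) metric comparison constant. The height side requires one more input: since $\hat h$ is a positive definite quadratic form on $\R \otimes \mathcal{A}(K)$ and $\beta_1,\dots,\beta_r$ are a $\Z$-basis of a finite-index subgroup of the free part, $\hat h(q_1\beta_1 + \cdots + q_r\beta_r) \asymp \max_j |q_j|^2$; hence the condition $\hat h(\beta) < Q$ is equivalent, up to replacing $Q$ by a bounded power/multiple of itself, to $\max_j |q_j| \leq Q^{1/2}$. Renaming $Q^{1/2}$ as the bound on the $|q_j|$ converts the exponent $-\theta/2$ on the height side into $-\theta$ on the tangent-space side, which is exactly the discrepancy between the two displayed inequalities.

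For both directions I would proceed by direct substitution: given the constant and the family of solutions guaranteed by one statement, produce the constant and solutions for the other using the dictionary above, being careful that the quantifier "for any $Q \geq 1$ / any positive integer $Q$" survives the reparametrization $Q \leftrightarrow Q^{1/2}$ (one passes between real $Q$ and integer $Q$ harmlessly since both statements allow an adjustable multiplicative constant $C$, and rounding changes $\max|q_j|$ and $\hat h$ by bounded factors). One should also note that in \eqref{eq:Wald_variety} one wants $\beta$ ranging over all of $\Gamma$, but only the free-part coordinates affect both $\hat h$ and the image under $\exp_{\mathcal A}$ up to the finitely many torsion translates, which can be absorbed into $C$ as well — though since $\Gamma$ as defined is torsion-free this is a non-issue. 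The main obstacle I anticipate is purely bookkeeping: getting the metric comparison $\dist(\exp_{\mathcal{A}}(\uu),\exp_{\mathcal{A}}(\vv)) \asymp \operatorname{dist}_{\R^g/\Omega_{\mathcal{A},\R}}(\uu,\vv)$ stated cleanly and making sure it holds uniformly (not just locally), and tracking the factor of $2$ in the exponent through the quadratic nature of $\hat h$ so that $Q^{-\theta/2}$ and $Q^{-\theta}$ line up correctly; there is no serious analytic difficulty since this is essentially \cite[Lemma 5.1]{Waldschmidt}, which I would cite for the geometric comparison and reproduce only the exponent bookkeeping.
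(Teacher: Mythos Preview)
The paper does not give its own proof of this lemma: it is stated with the attribution ``cf.\ \cite[Lemma 5.1]{Waldschmidt}'' and used as a black box. Your proposal is correct and reconstructs exactly the argument the paper implicitly relies on---indeed, in the proof of Lemma~\ref{exp_UB} the authors invoke both of your key ingredients verbatim, writing $\left\| q_1 \boldsymbol\alpha_1 + \cdots + q_r \boldsymbol\alpha_r + p_1 \boldsymbol\gamma_1 + \cdots + p_g \boldsymbol\gamma_g + \boldsymbol\xi\right\| \asymp \dist(P,Q)$ and $\hat{h}(q_1\beta_1 + \cdots + q_r\beta_r) \asymp \max_i |q_i|^2$, which together with the substitution $Q \leftrightarrow Q^{1/2}$ is precisely your dictionary.
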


Waldschmidt's conjecture \ref{Wald_conj} is that the statement (\ref{Wald_variety}) holds with $\theta = r/g - \varepsilon$ for any simple abelian variety defined over a real number field $K$ and any $\varepsilon > 0$. We can rewrite the equation in (\ref{Wald_eq}) in the following manner. Let the matrix $H \in M_{g \times r}(\R)$ have columns given by $\{\boldsymbol\alpha_1, ...., \boldsymbol\alpha_r\}$ and the matrix $J \in M_{g\times g}(\R)$ have columns given by $\{\boldsymbol\gamma_1, ...., \boldsymbol\gamma_g\}$. Then we can write \eqref{Wald_eq} as:
\[
\left\| H \textbf{q} + J \textbf{p} + \boldsymbol\xi\right\| \leq C Q^{-r/g + \varepsilon},
\]
where $\textbf{q}$ is the column vector of the numbers $q_1,...,q_r$ and $\textbf{p}$ is the column vector of the numbers $p_1,...,p_g$. Multiplying the lefthand side by the inverse of the matrix $J$ on the left, which will change the norm by only a bounded amount, we see that 
\[
\left\| J^{-1} H \textbf{q} + \textbf{p} + J^{-1} \boldsymbol\xi\right\| \leq C Q^{-r/g + \varepsilon}.
\]
This is the same statement as in Theorem \ref{VWA_equiv} with 
\[
m = g,\ n = r,\ A = J^{-1}H, \text{ and }\boldsymbol\gamma = J^{-1}\boldsymbol\xi.
\]
Combining Lemma \ref{Wald_equiv} with Theorem \ref{VWA_equiv} shows that Waldschmidt's conjecture is true of some simple abelian variety $\mathcal{A}$ defined over $K$ if and only it the matrix $J^{-1}H$ as constructed above is not VWA. 

This finishes the proof of Theorem \ref{theorem_main}.

\begin{lemma}\label{exp_UB}
    For any simple abelian variety $\mathcal{A}$ of dimension $g$ and any number field $K$ as above, we have 
    \[
    \sigma_{\mathcal{A},K} \leq \frac{r}{2g},
    \] where, as usual, $r$ denotes the rank of the Mordell-Weil group $\mathcal{A}(K)$.
\end{lemma}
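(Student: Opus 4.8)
The plan is to establish the bound by a soft measure-theoretic argument: I will show that Lebesgue-almost every $P \in \mathcal{A}(\R)^0$ satisfies $\sigma(P) \le \tfrac{r}{2g}$. Since the Mordell--Weil group $\mathcal{A}(K)$ is finitely generated, hence countable, it is a null set in the $g$-dimensional manifold $\mathcal{A}(\R)^0$, so almost every such $P$ in fact lies in $\mathcal{A}(\R)^0 \setminus \mathcal{A}(K)$; the existence of a single such $P$ with $\sigma(P) \le \tfrac{r}{2g}$ then forces $\sigma_{\mathcal{A},K} \le \tfrac{r}{2g}$. (If $\overline{\mathcal{A}(K)} \subsetneq \mathcal{A}(\R)^0$ — in particular if $r=0$ — the inequality is immediate since $\sigma_{\mathcal{A},K}=0$ by definition; the argument below makes no use of density, and none of simplicity of $\mathcal{A}$.)

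The argument rests on two standard estimates. First, a height count: $\mathcal{A}(K)$ modulo torsion is a rank-$r$ lattice in $\R \otimes \mathcal{A}(K)$ on which $\hat{h}$ restricts to a positive definite quadratic form, so $\#\{Q \in \mathcal{A}(K) : \hat{h}(Q) \le T\} \ll T^{r/2}$ — this is just the count of lattice points in an ellipsoid of ``radius'' $\sqrt{T}$, and only the upper bound is used. Second, a volume bound: $\mathcal{A}(\R)^0$ is a compact $g$-dimensional Riemannian manifold, so there are $\rho_0, C_0 > 0$ with $\operatorname{vol}(B(x,\rho)) \le C_0 \rho^g$ for every $x$ and every $0 < \rho < \rho_0$. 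Now fix $\omega > \tfrac{r}{2g}$ and set, for $k \ge 1$,
\[
E_k = \bigcup \big\{\, B\big(Q, \hat{h}(Q)^{-\omega}\big) : Q \in \mathcal{A}(K) \cap \mathcal{A}(\R)^0,\ 2^k \le \hat{h}(Q) < 2^{k+1} \,\big\}.
\]
For $k$ large enough that $2^{-k\omega} < \rho_0$, the two estimates give $\operatorname{vol}(E_k) \ll 2^{kr/2} \cdot 2^{-k\omega g} = 2^{k(r/2 - \omega g)}$, which is summable in $k$ because $\omega g > r/2$. By the convergence Borel--Cantelli lemma, $\operatorname{vol}(\limsup_k E_k) = 0$.

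If $P \notin \limsup_k E_k$, then $P$ belongs to only finitely many $E_k$, so there is a threshold beyond which every $Q \in \mathcal{A}(K)$ of that large a height either lies on $\mathcal{A}(\R)^0$ and satisfies $\dist(P,Q) > \hat{h}(Q)^{-\omega}$, or lies on another component of $\mathcal{A}(\R)$, in which case $\dist(P,Q)$ is bounded below by the (positive) gap between components. In either case $\frac{-\log \dist(P,Q)}{\log \hat{h}(Q)} \le \omega + o(1)$ as $\hat{h}(Q) \to \infty$, so $\sigma(P) \le \omega$. Intersecting the resulting full-measure sets over a sequence $\omega_j \downarrow \tfrac{r}{2g}$ shows that almost every $P$ satisfies $\sigma(P) \le \tfrac{r}{2g}$, and the reduction in the first paragraph completes the proof.

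I do not expect a serious obstacle; the computation is routine and the only points needing care are the two input estimates — extracting the power-saving $T^{r/2}$ in the height count (which is where the quadratic-form nature of $\hat{h}$ on $\mathcal{A}(K)/\mathrm{tors}$ enters, rather than merely that there are finitely many points below any bound) and the uniform-in-basepoint volume bound for small balls on a compact Riemannian manifold. One could alternatively argue through the matrix $A = J^{-1}H$ of Section \ref{S:proof}, using that almost every inhomogeneous shift $\boldsymbol\gamma$ has inhomogeneous Diophantine exponent at most $n/m = r/g$ and then translating back via Lemma \ref{Wald_equiv}; the direct argument above is cleaner as it sidesteps the finitely many cosets of $\Gamma$ in $\mathcal{A}(K)$ and the passage between the uniform and ``infinitely often'' versions of that correspondence.
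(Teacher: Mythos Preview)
Your argument is correct and takes a genuinely different route from the paper. The paper proceeds by contradiction through the matrix correspondence of Section~\ref{S:proof}: if $\sigma_{\mathcal{A},K} > \tfrac{r}{2g}$, then for some $\mu > r/g$ every inhomogeneous shift $J^{-1}\boldsymbol\xi$ admits infinitely many solutions to $\|J^{-1}H\,\qq + \pp + J^{-1}\boldsymbol\xi\| \le C'\|\qq\|^{-\mu}$, and this is ruled out by invoking \cite[Theorem~1]{BHKV} (the set of shifts $\xx$ with $\|A\qq+\pp+\xx\| \ge c\|\qq\|^{-r/g}$ for all $\qq,\pp$ has full Hausdorff dimension, hence is nonempty). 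Your Borel--Cantelli argument works directly on the variety, is more elementary --- no black box beyond the lattice-point count $\#\{\hat h\le T\}\ll T^{r/2}$ and the small-ball volume bound --- and yields the complementary conclusion that $\{P : \sigma(P) \le r/(2g)\}$ has full Lebesgue measure. The paper's route, by contrast, actually exhibits points that are inhomogeneously \emph{badly} approximable at the critical exponent (a strictly smaller set than yours), and dovetails with the matrix framework already built; your route sidesteps that framework entirely, which for the lemma as stated is a genuine simplification.
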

\begin{proof}
    Suppose, for contradiction, that $\sigma_{\mathcal{A},K} > \frac{r}{2g}$. Then there exists $\mu > \frac{r}{g}$ such that for every $P \in \mathcal{A}(\R)^0$, there exists a constant $C$ and infinitely many $Q \in \mathcal{A}(K)$ satisfying 
    \[
    \dist(P,Q) < C \left(\hat{h}(Q)\right)^{-\mu/2}.
    \]
    As in the proof of Lemma \ref{Wald_equiv} in \cite{Waldschmidt}, and using the notation we introduced above, if we write $Q = q_1 \beta_1 + ... + q_r \beta_r$, then we have that 
    \[
     \left\|  q_1 \boldsymbol\alpha_1 + ... + q_r \boldsymbol\alpha_r + p_1 \boldsymbol\gamma_1 + ... + p_g \boldsymbol\gamma_g + \boldsymbol\xi\right\| \asymp \dist(P,Q) < C \left(\hat{h}(Q)\right)^{-\mu/2}
    \]
    for some choice of rational integers $p_1, ...., p_g$ and where $\exp_\mathcal{A}\boldsymbol\xi = P$. Rewriting this as before, this implies

    \[
\left\| J^{-1} H \textbf{q} + \textbf{p} + J^{-1} \boldsymbol\xi\right\| \leq C' \left(\hat{h}(Q)\right)^{-\mu/2}.
\]
Because the height function $\hat{h}$ defines a positive definite quadratic form on $\R \otimes \mathcal{A}(K),$ we have, up to constant, that 
\[
\hat{h}(Q) = \hat{h}\left(q_1\beta_1 + ... + q_r\beta_r\right)  \asymp \max_{1 \leq i \leq r}|q_i|^2.
\]
Therefore, for some possibly new constant $C'$, we have that 

\begin{equation}\label{UB}
\left\| J^{-1} H \textbf{q} + \textbf{p} + J^{-1} \boldsymbol\xi\right\| \leq C' \max_{1 \leq i \leq r} |q_i|^{-\mu},
\end{equation}
for all points $J^{-1}\boldsymbol\xi\in \R^g$. However, this is never the case:
\begin{theorem*}[{\cite[Theorem 1]{BHKV}}]
    For any matrix $A \in \mathcal{M}_{g \times r}(\R),$ the Hausdorff dimension of the set 
    \[
    \left\{\xx \in \R^g :\ \exists c>0  \text{ s.t. } \left\|A \qq + \pp + \xx\right\| \geq \frac{c}{\max_i |q_i|^{r/g}}\ \forall \qq\in \Z^r,\ \forall \pp \in \Z^g\right\}
    \] equals $g$. In particular, this set is always nonempty. 
\end{theorem*}
Letting $A = J^{-1}H$ and $\xx = J^{-1}\boldsymbol\xi$ as usual, and recalling that $\mu > \frac{r}{g}$, shows that this result is incompatible with equation \eqref{UB} holding for all points $\xx \in \R^g$. Therefore, $\sigma_{\mathcal{A},K} \leq \frac{r}{2g},$ as needed. 

\end{proof}

As a consequence of Lemma \ref{exp_UB}, our conjecture \ref{Conj_FLMS} is weaker than Waldschmidt's conjecture \ref{Wald_conj}, in that a proof of \ref{Wald_conj} would immediately imply that \ref{Conj_FLMS} is true as well. 


\section{Rank 1 Elliptic Curves}

In this section, we establish further results in the case of a rank 1 elliptic curve defined over $K$. 

Let $d_E(\cdot,\cdot)$ denote a Riemannian metric on $E(\R)^0$ and let $\hat{h}$ be the N\'eron--Tate height defined on $E(K)$ associated with $d_E(\cdot,\cdot)$ (see, for instance, \cite[Chapter 8, Section 9]{Silverman}). 

\ignore{
While we have not found the Diophantine exponent of an elliptic curve to be discussed in the literature, Waldschmidt \cite{Waldschmidt} has a conjecture about abelian varieties which involves a closely related quantity. In particular, the following conjecture would be implied by \cite[Conjecture 1.1]{Waldschmidt}:
\begin{conjecture}\label{DEconj}
Let $E$ be an elliptic curve defined over $\Q$ and $r := \operatorname{rank}(E(\Q))\geq 1$. Then its Diophantine exponent satisfies:
$$\sigma_E \geq \frac{r}{2}.$$
\end{conjecture}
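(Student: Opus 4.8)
The plan is to derive this bound from Waldschmidt's Conjecture \ref{Wald_conj} using the dictionary assembled in Section \ref{S:proof}; an elliptic curve $E$ has $g=1$ and is automatically simple, so Conjecture \ref{Wald_conj} applies to it, and by Theorem \ref{theorem_main} it is equivalent to the assertion that the $1\times r$ matrix $A:=J^{-1}H$ attached to the pair $(E,K)$ in Section \ref{S:proof} is not very well approximable (here $m=g=1$ and $n=r$). So I would assume Conjecture \ref{Wald_conj} and run the argument of Section \ref{S:proof} backwards.

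First I would apply Theorem \ref{VWA_equiv} to $A$: for every $\varepsilon>0$ there is $c=c(\varepsilon)>0$ such that for all sufficiently large $Q$ and every $\boldsymbol\gamma\in\R$ there are $\qq\in\Z^r$ with $\|\qq\|<Q$ and $p\in\Z$ with $|A\qq-p-\boldsymbol\gamma|<cQ^{-(r-\varepsilon)}$. Then I fix $P\in E(\R)^0\setminus E(K)$, write $P=\exp_E(\boldsymbol\xi)$, and take $\boldsymbol\gamma=J^{-1}\boldsymbol\xi$ (the sign discrepancy between the $+\boldsymbol\xi$ of Lemma \ref{Wald_equiv} and the $-\boldsymbol\gamma$ of Theorem \ref{VWA_equiv} is absorbed by replacing $P$ with $-P$, which is harmless since $[-1]$ is an isometry of $E(\R)^0$ and $\hat h$ is even). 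Undoing the substitutions $A=J^{-1}H$, $\boldsymbol\gamma=J^{-1}\boldsymbol\xi$ and multiplying by $J$, the solution $(\qq,p)$ yields $\|q_1\boldsymbol\alpha_1+\dots+q_r\boldsymbol\alpha_r+p\,\boldsymbol\gamma_1+\boldsymbol\xi\|\ll Q^{-(r-\varepsilon)}$, so by Lemma \ref{Wald_equiv} (in the quantitative form used in the proof of Lemma \ref{exp_UB}) the point $R:=q_1\beta_1+\dots+q_r\beta_r\in E(K)$ satisfies $d_E(P,R)\ll Q^{-(r-\varepsilon)}$. Since $\hat h$ is a positive definite quadratic form on $\R\otimes E(K)$, we have $\hat h(R)\asymp\max_i|q_i|^2\le Q^2$, hence $d_E(P,R)\ll(\hat h(R))^{-(r-\varepsilon)/2}$. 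Letting $Q\to\infty$, the distances $d_E(P,R)$ tend to $0$ while $P\notin E(K)$, so these points $R$ are eventually pairwise distinct and, as only finitely many rational points have bounded height, $\hat h(R)\to\infty$ through them; therefore $\sigma(P)\ge(r-\varepsilon)/2$, and letting $\varepsilon\downarrow0$ gives $\sigma(P)\ge r/2$. As $P\in E(\R)^0\setminus E(K)$ was arbitrary, $\sigma_E=\inf_P\sigma(P)\ge r/2$; as a byproduct the argument also reproves $E(\R)^0\subseteq\overline{E(K)}$, so the infimum is not vacuously $0$. Combined with the unconditional upper bound of Lemma \ref{exp_UB}, this would in fact give $\sigma_E=r/2$, i.e. Conjecture \ref{Conj_FLMS} for $g=1$.

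The only genuine obstacle is that the argument is conditional: its single nontrivial ingredient is the non-VWA property of $J^{-1}H$ — i.e. Waldschmidt's conjecture itself — which is open as soon as $r\ge2$; everything else (Theorem \ref{VWA_equiv}, the linear change of variables, the comparison $\hat h\asymp\max_i|q_i|^2$, the passage to a $\limsup$) is bookkeeping already carried out in Section \ref{S:proof}. For $r=1$ the bound is unconditional: $A$ is then a single real number, and the one-variable inhomogeneous Dirichlet (Minkowski) theorem gives $|\alpha q+p+\gamma|\ll|q|^{-1}$ for infinitely many $q$ whenever $\gamma\notin\Z\alpha+\Z$ — exactly the case $P\notin E(K)$ — which is the content of Theorem \ref{weakDT}. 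For $r\ge2$ no unconditional replacement for the hypothesis of Theorem \ref{VWA_equiv} is known, so $\sigma_E\ge r/2$ currently rests on Conjecture \ref{Wald_conj}.
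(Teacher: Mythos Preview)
Your proposal is correct and matches the paper's treatment: the statement is a conjecture, not a theorem, and the paper does not prove it but only records (in the sentence introducing it and again after Lemma~\ref{exp_UB}) that it is implied by Waldschmidt's Conjecture~\ref{Wald_conj}. Your argument is precisely a fleshed-out version of that implication---Theorem~\ref{VWA_equiv} applied to $A=J^{-1}H$, the change of variables of Section~\ref{S:proof}, the comparison $\hat h(R)\asymp\|\qq\|^2$, and the passage to infinitely many distinct $R$'s---together with the correct observation that the $r=1$ case is unconditional via Minkowski (this is Theorem~\ref{weakDT}) while $r\ge 2$ remains conditional on the non-VWA property of $J^{-1}H$.
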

}
For rank 1 elliptic curves defined over $K$, we can actually establish Conjecture \ref{Conj_FLMS}:
\begin{theorem}\label{weakDT}

If the rank of $E$ over $K$ is exactly 1, then we have equality: 
\[
\sigma_{E,K} = \frac{1}{2}.
\]
Explicitly, there exists a constant $C$ depending only on the curve $E$ such that for every $P \in E(\R)^0 \setminus E(K)$, there exist infinitely many points $Q \in E(K)$ satisfying 
\begin{equation*}\label{dirichlet}
    d_E(P,Q) < C\left(\hat{h}(Q) \right)^{-1/2}.
\end{equation*}
Moreover, the function $C\left(\hat{h}(Q) \right)^{-1/2}$ cannot be improved (in a sense defined below) if the inequality in (\ref{dirichlet}) is to remain true for all points $P \in E(\R)^0\setminus E(K)$. 
\end{theorem}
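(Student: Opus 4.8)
The plan is to transport the problem on $E(\R)^0$ to an inhomogeneous approximation problem on the one-dimensional torus $\R/\Z$ via Lemma~\ref{Wald_equiv}, and then invoke Theorem~\ref{VWA_equiv} together with the Hurwitz/continued fraction theory for real numbers. Concretely, since $r=1$, the matrix $A=J^{-1}H$ of Section~\ref{S:proof} is a single real number $\alpha\in\R$ (the ratio, in the one-dimensional tangent space $T_E(\R)\cong\R$, of the image $\boldsymbol\alpha_1$ of a generator of $E(K)\cap E(\R)^0$ modulo torsion to a generator $\boldsymbol\gamma_1$ of the period lattice $\Omega_{E,\R}$). A classical fact (transcendence of elliptic logarithms of algebraic points, or simply that $P\notin E(K)$ forces $\boldsymbol\xi/\boldsymbol\gamma_1\notin\Q+\Q\alpha$) is that $\alpha$ is irrational; more to the point, $\alpha$ is not a Liouville number — indeed any single real number fails to be very well approximable unless it is Liouville, and here $\alpha$ is in fact not Liouville because $\boldsymbol\alpha_1$ is an elliptic logarithm of an algebraic point, so by Masser's/Baker-type lower bounds for elliptic logarithms (or one can sidestep this and use that $\omega(\alpha)=1$ for Lebesgue-a.e.\ number combined with the specific structure here) $\alpha$ has finite irrationality measure. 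For the \emph{equality} $\sigma_{E,K}=1/2$, however, I would not even need finiteness of the irrationality measure: by Lemma~\ref{exp_UB} we already have $\sigma_{E,K}\le 1/2$, so it suffices to prove the matching lower bound $\sigma_{E,K}\ge 1/2$, i.e. the displayed Dirichlet-type inequality \eqref{dirichlet}.

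For the lower bound, the key is the \emph{inhomogeneous Dirichlet theorem in dimension one}: for any irrational $\alpha$ and any $\gamma\in\R$ and any $Q\ge 1$, there exist $q\in\Z$ with $0<|q|\le Q$ and $p\in\Z$ with $\|\alpha q - p - \gamma\| < 2/Q$ (this is elementary via the pigeonhole/three-distance argument, and is exactly the content of Theorem~\ref{VWA_equiv} specialized to $m=n=1$ with $\varepsilon$ irrelevant since the exponent $n/m=1$ is already attained with no loss — one does \emph{not} need $\alpha\notin\mathrm{VWA}$ for the crude statement, only for an improvement). Taking $\gamma = J^{-1}\boldsymbol\xi$ where $\exp_E\boldsymbol\xi=P$, and running this for a sequence $Q\to\infty$, produces integers $q$ (necessarily $\to\infty$ in absolute value, since $P\notin E(K)$ guarantees we cannot have the distance be exactly zero and the approximations with bounded $q$ would give a fixed positive lower bound) with $\|\alpha q - p - \gamma\|\ll 1/|q|$. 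Translating back through Lemma~\ref{Wald_equiv}: setting $Q=q\beta_1\in E(K)$, we get $d_E(P,Q)\asymp\|\alpha q - p - \gamma\| \cdot |J| \ll 1/|q|$, while $\hat h(Q)\asymp|q|^2$ because $\hat h$ is a positive definite quadratic form on the rank-one group $\R\otimes E(K)$. Hence $d_E(P,Q)\ll \hat h(Q)^{-1/2}$ with a constant depending only on $E,K$ (through $|J|$ and the implied constants in Lemma~\ref{Wald_equiv} and the quadratic form), giving infinitely many such $Q$ and therefore $\sigma(P)\ge 1/2$ for every $P\in E(\R)^0\setminus E(K)$, hence $\sigma_{E,K}\ge 1/2$.

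For the optimality ("cannot be improved") clause, the natural reading — which should be stated precisely wherever the phrase "in a sense defined below" points — is that one cannot replace $C\,\hat h(Q)^{-1/2}$ by $\psi(\hat h(Q))$ for any $\psi$ with $\psi(t)/t^{-1/2}\to 0$ and still have the inequality hold (with some point-dependent or uniform constant) for \emph{all} $P\in E(\R)^0\setminus E(K)$. I would prove this by exhibiting a badly approximable target: choose $P$ so that $\gamma=J^{-1}\boldsymbol\xi$ is \emph{inhomogeneously badly approximable} with respect to $\alpha$, meaning $\inf_{q\ne 0}|q|\,\|\alpha q-p-\gamma\|>0$; such $\gamma$ exist in abundance (in fact the set of such $\gamma$ is winning, cf.\ the discussion around Theorem~\ref{VWA_equiv}, and certainly nonempty — e.g.\ for $\alpha$ with bounded partial quotients one can take $\gamma$ with bounded "inhomogeneous partial quotients"). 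For such $P$, translating back gives $d_E(P,Q)\gg \hat h(Q)^{-1/2}$ for all $Q\in E(K)$ with $\hat h(Q)$ large, so no faster-decaying $\psi$ can work for this $P$. The main obstacle I anticipate is bookkeeping rather than depth: I must make sure Lemma~\ref{Wald_equiv}'s two-sided comparison $d_E(P,Q)\asymp\|\cdots\|$ is genuinely two-sided with constants independent of $P$ and $Q$ (so that both the upper bound for the Dirichlet statement and the lower bound for optimality go through), and I must confirm that the passage from "$P\notin E(K)$" to "$\gamma\notin\Q+\Q\alpha$, hence the relevant $q$'s are unbounded" is airtight — this uses that $\Gamma=\Z\beta_1$ together with torsion exhausts $E(K)\cap E(\R)^0$, so a bounded-$q$ approximation with distance zero would force $P\in E(K)$.
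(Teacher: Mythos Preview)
Your overall strategy coincides with the paper's: lift to the tangent line via $\exp_E$, reduce to one-dimensional inhomogeneous approximation (your $\alpha=J^{-1}H$ is the paper's $\theta/\omega$), invoke a classical result to get $\|\alpha q-p-\gamma\|\ll 1/|q|$ for infinitely many $q$, translate back using $\hat h([q]\beta_1)=q^2\hat h(\beta_1)$, and finish with Lemma~\ref{exp_UB} for the matching upper bound. Your optimality discussion (inhomogeneously badly approximable $\gamma$) is likewise the same idea that underlies Lemma~\ref{exp_UB} via the BHKV theorem, so it is consonant with, if redundant to, what the paper already packages there.

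There is, however, a genuine error in your key lemma. The uniform ``inhomogeneous Dirichlet theorem'' you assert --- for \emph{every} irrational $\alpha$, every $\gamma$, and every $Q\ge 1$ there exist $0<|q|\le Q$, $p\in\Z$ with $\|\alpha q-p-\gamma\|<2/Q$ --- is false. That statement (with some constant in place of $2$) is \emph{equivalent} to $\alpha$ being badly approximable: this is exactly Theorem~\ref{BA_equiv} with $m=n=1$. Concretely, if $\alpha$ has a large partial quotient $a_{k+1}$, the three-distance theorem shows that for $q_k\le Q<q_{k+1}$ the maximal gap among $\{j\alpha\bmod 1:|j|\le Q\}$ is of order $1/q_k$, not $1/Q$, so no fixed constant can work. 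Your remark that ``one does not need $\alpha\notin\mathrm{VWA}$ for the crude statement'' is therefore wrong, and Theorem~\ref{VWA_equiv} does not rescue it either, since that theorem carries an unavoidable $\varepsilon$-loss and still requires $\alpha\notin\mathrm{VWA}$. The paper sidesteps this entirely by quoting Minkowski's inhomogeneous theorem: for $\theta/\omega$ irrational and $\gamma/\omega\notin\Z+\Z(\theta/\omega)$ (both checked in Lemma~\ref{lem1} from the hypotheses on $P$ and the generator), there are infinitely many $(p,q)$ with $|q|\cdot|q\,\theta/\omega+p-\gamma/\omega|<1/4$. This delivers the needed asymptotic bound with no Diophantine hypothesis on $\alpha$ at all. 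Replace your uniform Dirichlet claim by Minkowski and the rest of your argument goes through verbatim; your digression on elliptic logarithms and irrationality measures then becomes, as you suspected, entirely unnecessary.
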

In fact, the proof shows that for any elliptic curve $E$ defined over $K$ with rank \textit{at least} 1, we have 
\[
\sigma_{E,K} \geq \frac1{2},
\]
with equality in the case that the rank is exactly 1.

Before we prove Theorem \ref{weakDT}, we revisit the general discussion of abelian varieties from the introduction, but in this concrete setting. 

Let $E$ be an elliptic curve defined over $K$ and let $\Lambda$ be the associated complex lattice (see \cite[Chapter 6, Theorem 5.1]{Silverman}). Let $\wp$ denote the Weierstrass $P$-function associated to the lattice $\Lambda$, and let $\wp'$ be its derivative. It is well-known that 
(see e.g., \cite[Chapter 6, Theorem 3.6]{Silverman})  that
the map $$z \mapsto [\wp(z):\wp'(z):1]$$ defines a complex analytic isomorphism of complex Lie groups between $\C/\Lambda$ and $E(\C) \subset \P^2(\C)$.

As $E$ is defined over $K$ and hence over $\R$, this map restricts to the usual exponential map $$\exp_E: \R \to E(\R) \subset P^2(\R), \quad z \mapsto [\wp(z):\wp'(z):1]$$ whose image is the connected component of the origin, $E(\R)^0$, and which intertwines the group law of addition on $\R$ with the group law on the set $E(\R)$ (since the curve is defined over $K$, the group law preserves real coefficients, hence restricts to a group law on $E(\R)$). The kernel of this map is given by $\Z \omega$ for some nonzero period $\omega$. This map distorts the Riemannian metric on $E$ only by a bounded constant.  

Let $P$ be a point in $E(\R)^0 \setminus E(K)$ and let $Q$ be a generator of the free summand of the Mordell--Weil group $E(K)$ (we need only one such point because we assume that $E$ has rank 1). As in the discussion of Waldschmidt's conjecture above, let $\theta$ denote a preimage of $Q$ under $\exp_E$, and $\gamma$ a preimage of $P$.

\ignore{Let $\exp$ denote the usual exponential map $S^1 \cong \R/\Z\omega \to E(\R)^0$. Let us define $$\theta := \varphi(Q),\quad \gamma := \varphi(P).$$ }
\begin{lemma}\label{lem1}
We have $\frac{\theta}{\omega} \notin \Q$ and there do not exist integers $a,b$ for which $a \frac{\theta}{\omega} + b = \frac{\gamma}{\omega}$. 
\end{lemma}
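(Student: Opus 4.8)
The plan is to reduce both assertions to the single fact that $\exp_E\colon\R\to E(\R)^0$ is a surjective group homomorphism whose kernel is exactly $\Z\omega$, so that it intertwines addition of real numbers with the group law on $E$, combined with two features of the set-up: the point $Q$ generates the free summand $\Z$ of the Mordell--Weil group $E(K)$ and hence has infinite order, and $P\in E(\R)^0\setminus E(K)$. Both claims then follow by applying $\exp_E$ to a hypothetical linear relation among $\theta$, $\gamma$, $\omega$ and contradicting one of these features.

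For the first claim, suppose toward a contradiction that $\theta/\omega=p/q$ with $p\in\Z$ and $q\in\Z\setminus\{0\}$. Then $q\theta=p\omega\in\ker\exp_E$, so applying $\exp_E$ and using that it is a homomorphism yields $qQ=\exp_E(q\theta)=\exp_E(p\omega)=O$. Thus $Q$ is a torsion point, contradicting the fact that $Q$ generates the free part of $E(K)$ and therefore has infinite order. Hence $\theta/\omega\notin\Q$.

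For the second claim, suppose there are integers $a,b$ with $a(\theta/\omega)+b=\gamma/\omega$, i.e. $a\theta+b\omega=\gamma$. Applying $\exp_E$, and again using that it is a homomorphism with $\exp_E(\omega)=O$, gives
\[
P=\exp_E(\gamma)=a\,\exp_E(\theta)+b\,\exp_E(\omega)=aQ.
\]
Since $Q\in E(K)$ and $E(K)$ is a group, this forces $P=aQ\in E(K)$, contradicting $P\in E(\R)^0\setminus E(K)$.

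There is essentially no obstacle here: the only points needing (minor) care are that $\theta$ and $\gamma$ are genuine real preimages, so that the homomorphism property of $\exp_E\colon\R\to E(\R)^0$ applies verbatim, and that the phrase \emph{generator of the free summand} is exactly what supplies the infinite order of $Q$. The same argument in fact works whenever $E$ has rank at least $1$, with $Q$ taken to be any infinite-order point of $E(K)\cap E(\R)^0$; this lemma will be used downstream to exclude degenerate configurations when the approximation problem on $E(\R)^0$ is transported to a one-dimensional inhomogeneous approximation problem on $\R/\Z\omega$.
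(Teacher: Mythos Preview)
Your proof is correct and follows essentially the same approach as the paper's: both parts proceed by applying the group homomorphism $\exp_E$ (with kernel $\Z\omega$) to a hypothetical rational relation and deriving a contradiction with the infinite order of $Q$ or with $P\notin E(K)$, respectively. The only cosmetic difference is that the paper writes out the intermediate step $[p]\exp(\omega)=[p]O=O$ explicitly, whereas you absorb it into the kernel condition.
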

\begin{proof}
If it were that $\frac{\theta}{\omega} = \frac{p}{q} \in \Q$, then $q\theta = p\omega$, and hence $$[q]Q = [q]\exp(\theta) =  \exp(q\theta) = \exp(p\omega)= [p]\exp(\omega) = [p]O = O,$$ contradicting that $Q$ has infinite order. 

Similarly, if there existed integers $a,b$ for which $a\frac{\theta}{\omega} + b = \frac{\gamma}{\omega}$, then we would have that $a\theta + b\omega = \gamma$, and hence $$P = \exp(\gamma) = \exp(a\theta + b\omega) = [a]\exp(\theta) + [b]\exp(\omega) = [a]Q + [b]O = [a]Q,$$ contradicting that $P \in E(\R)\setminus E(K)$ (actually, it is enough that $P$ not belong to the orbit of $Q$; it could be a torsion rational point, for instance). So no such integers can exist. 
\end{proof}

Because the exponential map is an isometry, we have reduced the problem of approximating $P$ by the orbit $[q]Q$ to one of approximating $\gamma = \varphi(P)$ by points of the form $q\theta = \varphi([q]Q)$ in $\R/\Z\omega$. In other words we are interested in how small expressions of the form $$|\gamma - q\theta + p\omega|$$ can be in terms of the size of $|q|$. Dividing this expression through by $|\omega|$, we see that this is precisely a problem in \emph{inhomogeneous Diophantine approximation} on the real line. We have the following classical theorem of Minkowski: 
\begin{theorem*}[Minkowski; see for instance \cite{Niven}]
Let $\frac{\theta}{\omega}$ be any irrational number and let $\frac{\gamma}{\omega}$ be any number for which $\frac{\gamma}{\omega}= a\frac{\theta}{\omega} + b$ has no solutions in integers $a,b$. Then there exist infinitely many pairs of integers $(q,p)$ for which $$|q| \left|q \frac{\theta}{\omega} + p - \frac{\gamma}{\omega}\right|< \frac1{4}.$$ 
\end{theorem*}

Given the results of Lemma \ref{lem1}, we see that for infinitely many $n \in \Z$ we have $$d_E(P,[q]Q) \asymp |\gamma - q\theta + p\omega| < \frac{|\omega|}{4|q|}.$$ Since the canonical height satisfies  $\hat{h}([q]Q) = q^2 \hat{h}(Q)$ (see \cite[Chapter 8, Theorem 9.3(b)]{Silverman}), there therefore there exists a constant $C$, depending only on the curve $E$ for which $$d_E(P,[q]Q) < C(\hat{h}([q]Q))^{-1/2}.$$ As noted previously, this establishes the inequality $$\sigma_{E,K} \geq \frac1{2}.$$ 

Taken in conjunction with Lemma \ref{exp_UB} above, with $\frac{r}{2g} = \frac12$, this proves that for any rank 1 elliptic curve over $K$, $\sigma_{E,K} = \frac12$.

\ignore{
\subsection{Higher Rank Dirichlet, Optimality}
\subsubsection{Dirichlet-type Theorem}

One could hope to get a (weak) Dirichlet theorem for the general higher rank case in the same way we obtained Theorem \ref{weakDT} above. The analogous statement would require showing that, using the same notation as above, that for every \emph{totally irrational} tuple $(\matrixA_1,..,\matrixA_r)$ which are the generators of some elliptic curve over $\Q$, and every $\gamma$, that there are infinitely many $\textbf{n} \in \Z^r$ satisfying 
\[
\min_{m \in \Z} \left|\sum^r_{i=1} n_i\matrixA_i + m\omega -\gamma\right| < \frac{C}{\displaystyle\max_{1 \leq i \leq r} |n_i|^r}
\]
for some constant $C$, depending only on $\gamma$ and perhaps $(\matrixA_1,...,\matrixA_r)$ (in our Theorem \ref{weakDT} this constant is not dependent on $\boldsymbol{\matrixA}$, which we refer to as a \emph{uniform} Dirichlet result).  This would follow immediately if, for instance, this statement was simply true \emph{for all} totally irrational tuples. Unfortunately, that is not the case as the following theorem demonstrates: 
\begin{theorem*}[{\cite[Theorem 1]{BugeaudChevallier}}]\label{Dir_counterex}
Let $r=2,3$. There exist totally irrational $r$-tuples $(\gamma_1,...,\gamma_r)$ for which the set 
\[
\left\{\gamma\in \R\ :\ \min_{m \in \Z} \left|\sum^r_{i=1} n_i \gamma_i +m - \gamma\right| < \frac{C}{\displaystyle\max_{1 \leq i \leq r} |n_i|^r} \ \text{holds for some $C> 0$ and infinitely many } (n_1,...,n_r)\in \Z^r \right\}
\]
has Hausdorff dimension exactly $\frac1{r}$. \comkeith{Make sure that less than is correct here. }
\end{theorem*}
\emph{What does this mean for us?} If such a tuple could arise as the image of the generator of an elliptic curve defined over $\Q$, then for that curve, the analogous equation \eqref{dirichlet} would not hold. The conjecture that $\sigma_E \geq \frac{r}{2}$ might still be true, but we would need a different proof.

However, by a theorem of Cassels (\cite[Theorem XVIII]{Cassels}), the set of such \comkeith{Be more specific about the Cassels statement here} tuples is a null set with respect to the Lebesgue measure in any dimension. So the question is whether the generators of elliptic curves can hit this null set. Since the collection of such generators is a countable collection, measure theoretic results will not shed any light on these problems.  \comdavidl{The best I have found so far regarding the diophantine properties of the generators is a work by Gaudron on commutative algebraic groups (see https://theses.hal.science/tel-00001165v1 Theorem }

\subsubsection{Optimality}
The question of the optimality of such a theorem, even if it holds, is trickier. It follows from  \cite[Theorem 1]{BHKV} that the set 
\[
\left\{\gamma\in \R\ :\ \min_{m \in \Z} \left|\sum^r_{i=1} n_i \gamma_i +m - \gamma\right| > \frac{c}{\displaystyle\max_{1 \leq i \leq r} |n_i|^r} \ \text{holds for some $c:= c(\gamma) > 0$ and all } (n_1,...,n_r)\in \Z^r \right\}
\]
has full Hausdorff dimension. This was used in the proof of Theorem \ref{weakDT} to establish the bound $\sigma_E \leq \frac1{2}$. \textcolor{violet}{Similar results hold for other abelian varieties (except for the case of rank 1 elliptic curves).  In particular, for a higher rank or higher dimensional version of Theorem \ref{weakDT} to hold the resulting matrix would have to be not-very-well approximable.  This would require the generators to miss a set of dimension $mn\left(1-\frac{1}{m+n}\right)$ \cite[Theorem 1.11]{dasfishman}.}

In the higher rank setting, however, there is a complication with using the canonical height. The canonical height is, as mentioned earlier, a quadratic form on $E(\Q)$. This means that it satisfies the parallelogram law 
\[
\hat{h}(P+Q)= 2\hat{h}(P) + 2\hat{h}(Q) - \hat{h}(P-Q) .
\]
Therefore, by induction, we have that 
\[
\sqrt{\hat{h}\left(\sum^r_{i=1} [n_i] Q_i\right)} \leq \sqrt{2\sum^r_{i=1} n^2_i \hat{h}(Q_i)} \lesssim_\times  \displaystyle\max_{1\leq i \leq r} |n_i|.
\]
Obtaining a similar lower bound in terms of $\displaystyle\max_i |n_i|$ is harder though, since it requires an upper bound on the magnitude of terms like $\hat{h}(P-Q)$, which is a highly nontrivial problem. \comdavid{If we know that the canonical height is a quadratic form, doesn't that automatically imply the lower bound? Why invoke the parallelogram law at all?}

\subsection{Higher Rank Conjecture}

Even if the analogous Dirichlet-type theorem for higher rank is not true, we can still try to establish Conjecture \ref{DEconj} directly. This would follow immediately if for every $\varepsilon > 0$ there exist infinitely-many $\textbf{n} \in \Z^r$ satisfying
\[
\min_{m \in \Z} \left|\sum^r_{i=1} n_i\matrixA_i + m\omega -\gamma\right| < \frac{C}{\displaystyle\max_{1 \leq i \leq r} |n_i|^{r-\varepsilon}}.
\]

This is a problem in classical Diophantine approximation on the real line.

}

\ignore{
\begin{theorem}\label{DaniTransference}
Fix $m,n\in\N$ and let $\matrixA \in \MM_{m\times n}$. Then $\matrixA^T$ is not $\psi$-approximable for $0\leq\lim_{x\to\infty} x^\frac{n}{m}\psi(x)<\infty$ if and only if there is a $C>0$ such that for all sufficiently large $Q$ and all $\boldsymbol\gamma$, there exist $\pp\in\Z^m$ and $\qq\in\Z^n$ with $0 < \|\qq\| \leq Q$ satisfying
\[
\|\matrixA\qq + \pp + \boldsymbol\gamma\| < C\varphi\left(\frac{Q}{C}\right),
\]
where $\varphi(e^{t/n+r(t)})=e^{-t/m+r(t)}$.  \comdavidl{This is similar to the function defined in \cite{KleinbockWadleigh}[Lemmas 3.1,3.4].  Also check \url{https://arxiv.org/abs/2503.21180v1}.  Equivalently, $\psi^{-1}\left(\frac{C}{Q}\right)\|\matrixA\qq + \pp + \boldsymbol\gamma\| < C$.  In particular, the relation between $\psi$ and $\varphi$ is the same as between $f$ and $g$ in the Moshchevitin paper Eqns. 2, 3.  We probably only need that $\psi$ is decreasing and invertible as in their paper.  Corollary 1 of section 3.3 is an if and only if statement for BA (but with a different statement than ours).  I think that their Theorem A and Theorem 4 together basically show what we do here.}
\end{theorem}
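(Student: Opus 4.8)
The plan is to prove the theorem as a Dani‑type transference, running the same dynamical machinery used for Theorems~\ref{BA_equiv} and~\ref{VWA_equiv} but keeping the abstract profile $\psi$ (equivalently its Kleinbock--Margulis companion $r$) rather than specializing $r$ to a constant or to a linear function. Throughout one works with the unimodular lattice $\Lambda_A$, the diagonal flow $g_t$, and the polar lattice $(g_t\Lambda_A)^*$; write $u(t):=t/n+r(t)$ and $v(t):=-t/m+r(t)$, so that $\varphi$ is by definition the decreasing function with $\varphi\circ e^{u}=e^{v}$. (This is legitimate: $u$ is continuous and increasing to $\infty$ by the monotonicity of $t\mapsto t/m+r(t)$ and $t\mapsto t/n+r(t)$ recorded in \cite[Eqns.~(8.2a),(8.2b)]{KleinbockMargulis}, which is also exactly what makes $\varphi$ well defined.) Both directions pass through the equivalence, from \cite[Theorem~8.3]{KleinbockMargulis} applied to $A^T$, between $\psi$‑approximability of $A^T$ and the condition $\Delta\big((g_t\Lambda_A)^*\big)\geq r(t)$ holding for an unbounded set of times.

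\emph{Forward direction ($A^T$ not $\psi$‑approximable $\Rightarrow$ the uniform inhomogeneous statement).} Non‑$\psi$‑approximability of $A^T$ gives $\Delta\big((g_t\Lambda_A)^*\big)<r(t)$ for all $t>t_0$, i.e.\ $\lambda_1\big((g_t\Lambda_A)^*\big)>e^{-r(t)}$. By Mahler's transference inequality \cite[end of \S3]{Mahler_Transference}, $\lambda_{m+n}(g_t\Lambda_A)\asymp 1/\lambda_1\big((g_t\Lambda_A)^*\big)<e^{r(t)}$ up to a uniform constant, and since the codiameter of a lattice is $\asymp$ its last minimum, the codiameter of $g_t\Lambda_A$ is at most $c_2 e^{r(t)}$ for $t>t_0$. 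Hence for every $\boldsymbol\gamma\in\R^m$ there is $(\pp,\qq)\in\Z^{m+n}$ with
\[
\max\big\{e^{t/m}\|A\qq+\pp-\boldsymbol\gamma\|,\ e^{-t/n}\|\qq\|\big\}\leq c_2 e^{r(t)}
\]
(a standard perturbation secures $\qq\neq\0$, using that the grid points with $\qq=\0$ are $e^{t/m}$‑separated). Given $Q$ large, choose $t$ with $c_2 e^{u(t)}=Q$: then $\|\qq\|\leq Q$, while $\|A\qq+\pp-\boldsymbol\gamma\|\leq c_2 e^{v(t)}=c_2\,\varphi(Q/c_2)$, which is the asserted inequality with $C=c_2$ (replace $\boldsymbol\gamma$ by $-\boldsymbol\gamma$ and bump the constant slightly to make it strict).

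\emph{Reverse direction, by contraposition ($A^T$ $\psi$‑approximable $\Rightarrow$ some $\boldsymbol\gamma$ fails the statement).} By \cite[Theorem~8.3]{KleinbockMargulis} there are arbitrarily large $t$ with $\Delta\big((g_t\Lambda_A)^*\big)\geq r(t)$; by Lemma~\ref{SimulMin} applied to the polar lattices these may be taken to be the local minima $\{t_k\}$ of $t\mapsto\|(g_t\Lambda_A)^*\|$, at which $\|(g_{t_k}\Lambda_A)^*\|\leq e^{-r(t_k)}$. A minimal nonzero vector $\rr_k=(\aa_k,\bb_k)\in(g_{t_k}\Lambda_A)^*$ then has $\|\aa_k\|=\|\bb_k\|\leq e^{-r(t_k)}$, so $g_{t_k}\Lambda_A$ lies in the union of the hyperplanes $H'_{k,i}=\{(\cc,\dd):\aa_k\cdot\cc+\bb_k\cdot\dd=i\}$, whose $\R^m$‑sections $H_{k,i}=\{\cc:(\cc,\0)\in H'_{k,i}\}$ are at mutual distance $1/\|\aa_k\|\geq e^{r(t_k)}$. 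From here one runs Alice's hyperplane‑absolute‑game strategy from the proof of Theorem~\ref{VWA_equiv} essentially verbatim, with $e^{\theta_\sigma t}$ replaced by $e^{r(t)}$ and $e^{\theta_\mu t}$ replaced by $e^{\widetilde r(t)}$ for an auxiliary profile $\widetilde r$ satisfying $\widetilde r(t)<r(t)$ eventually: when Bob's ball has radius comparable to $e^{-t_k/m+\widetilde r(t_k)}$, Alice deletes the $e^{-t_k/m+\widetilde r(t_k)}$‑neighborhood of $e^{-t_k/m}H_{k,i}$ for the $i$ nearest Bob's center. Since $\widetilde r<r$, these neighborhoods shrink faster than the $e^{-t_k/m+r(t_k)}$‑spacing of the rescaled hyperplanes, so eventually at most one meets Bob's ball; and the distance computation in Theorem~\ref{VWA_equiv} shows any $\boldsymbol\gamma$ surviving all of Alice's moves admits no solution at the times $Q_k=c\,e^{u_{\widetilde r}(t_k)}$ to the inequality with the $\widetilde r$‑version of $\varphi$. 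Thus that bad set is HAW; intersecting over a countable sequence $\widetilde r_j\nearrow r$ and a countable set of constants $c$ (a countable, hence HAW, intersection) yields that the set of $\boldsymbol\gamma$ witnessing the negation of the statement for $\varphi$ itself is HAW, in particular nonempty.

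\emph{Main obstacle.} The only genuinely delicate point is the bookkeeping in the reverse direction: because $r$ need not be linear one cannot simply rescale a parameter the way $\mu<\sigma$ is used in Theorem~\ref{VWA_equiv}, so one must choose the auxiliary profiles $\widetilde r_j$ carefully — verifying that each $u_{\widetilde r_j}$ is still invertible, that the game at stage $j$ remains winnable (deleted neighborhoods asymptotically negligible against inter‑hyperplane gaps, Bob's balls not forced to shrink too fast), and that $\widetilde r_j\nearrow r$ so the intersection recaptures the $\varphi$ of the statement. The forward direction is routine once one has the Dani correspondence. Alternatively, the whole theorem can be recovered from \cite{MoshchevitinNeckrasov}, exactly as Theorems~\ref{BA_equiv} and~\ref{VWA_equiv} are in the remark above, by taking $f=\psi$ and $g=\varphi$ in their setup (their Eqns.~(2),(3)): their existence result then gives the forward direction, and their measure‑theoretic companion gives a full‑measure — a fortiori nonempty — set of bad $\boldsymbol\gamma$ for the reverse direction.
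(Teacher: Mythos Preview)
Your forward direction is the paper's argument verbatim: the Dani correspondence for $A^T$ gives $\lambda_1\big((g_t\Lambda_A)^*\big)>e^{-r(t)}$ for large $t$, Mahler's transference bounds the codiameter of $g_t\Lambda_A$ by a constant times $e^{r(t)}$, and the substitution $e^{t/n+r(t)}=Q/C$ converts this into the inhomogeneous inequality.

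The reverse directions differ in mechanism. The paper does \emph{not} use auxiliary profiles $\widetilde r_j\nearrow r$. It plays the hyperplane absolute game directly at the profile $r$: from $\psi$-approximability of $A^T$ it extracts, for each $k\in\N$, a time $t_k$ and a short dual vector $\rr_k\in(g_{t_k}\Lambda_A)^*$ with $e^{r(t_k)}\|\rr_k\|\le 1/k$ (the paper flags in a comment that this step may require tweaking the definition of $\psi$-approximable). That extra factor $k$ is precisely what makes the rescaled hyperplane spacing $\ge k\,e^{r(t_k)-t_k/m}$ dominate Alice's deletion radius $2C\,e^{r(t_k)-t_k/m}$, so she wins while deleting at the scale dictated by $r$ itself; the surviving $\boldsymbol\gamma$ then fail the inhomogeneous inequality for the genuine $\varphi$ at $Q_k=Ce^{t_k/n+r(t_k)}$, and one only intersects over $C\in\N$. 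The paper also records a short non-game alternative for this direction: assume the inhomogeneous statement, read off a codiameter bound $\lesssim e^{r(t)}$, transfer back via Mahler to $\lambda_1^*\gtrsim e^{-r(t)}$, and invoke the Dani correspondence.

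Your auxiliary-profile route has a structural gap that goes beyond the bookkeeping you flag. In Theorem~\ref{VWA_equiv} the target set carries a built-in ``for all $\varepsilon>0$'', which is what makes $D=\bigcap_{c}\bigcap_{\mu_j\searrow m/n}D_{c,\mu_j}$ an honest identity. Here the statement to negate involves a \emph{single} $\varphi$, and the set you need is $D=\{\boldsymbol\gamma:\forall C\ \exists Q_k\to\infty\ \forall(\pp,\qq)\ \|A\qq+\pp+\boldsymbol\gamma\|\ge C\varphi(Q_k/C)\}$. Writing $\varphi(Q)=Q\cdot\exp\big(-\tfrac{m+n}{mn}\,u^{-1}(\log Q)\big)$ shows that $\widetilde r<r$ forces $\varphi_{\widetilde r}(Q)<\varphi(Q)$ for large $Q$; hence ``no solution below $c\,\varphi_{\widetilde r_j}(Q/c)$'' is \emph{weaker} than ``no solution below $c\,\varphi(Q/c)$'', giving $D_{c,\widetilde r_j}\supseteq D_{c,r}$, and the witnessing heights $Q_k$ in each $D_{c,\widetilde r_j}$ depend on $j$. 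So $\bigcap_j D_{c,\widetilde r_j}$ does not recapture $D_{c,r}$ --- there is no ``$\forall\varepsilon$'' slot to absorb the countable intersection. What actually closes the paper's argument is the slack $1/k$ between $\|\rr_k\|$ and the threshold $e^{-r(t_k)}$, playing the role that $\mu<\sigma$ played in the VWA proof; without an analogue of that slack, your scheme does not terminate at $\varphi$ itself.
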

\begin{proof}
To prove the forward direction, assume that $\matrixA^T\in\MM_{n\times m}$ is not $\psi$-approximable. Consider the unimodular lattice 
\[
\Lambda_\matrixA^* := \begin{pmatrix} I_m&0\\ -\matrixA^T&I_n\end{pmatrix} \Z^{m+n}
\]
and the flow on the space of lattices given by 
\[
g_t := \begin{pmatrix} e^{t/m} I_m &0\\0&e^{-t/n} I_n\end{pmatrix}.
\]
By the definition of not $\psi-$approximable, it follows that all of the nonzero lattice vectors in $g_{-t} \Lambda_\matrixA^*$ have supremum norm greater than $e^{-r(t)}$, where $\psi(e^{t/m-r(t)})=e^{-t/n-r(t)}$.  So, $\lambda_1^*\geq e^{-r(t)}$.

Let $\mu(\Lambda)$ be the codiameter in the infinity norm of the lattice $\Lambda$.  Let $\{\bb_i\}_{i=1}^d$ be a basis for $\Lambda$
such that $|\bb_i|_\infty=\lambda_i$.  For any point $\mathbf{x}\in\R^d,$ there is a
point $\qq\in\Lambda$ within $|\mathbf{x}-\qq|_\infty\leq\sum_{i=1}^d(|x_i-q_i\|\bb_i|_\infty)\leq\sum_{i=1}^d(
|x_i-q_i|\lambda_i)\leq\frac{1}{2}\sum_{i=1}^d\lambda_i\leq\frac{d}{2}\lambda_d$.
By a theorem of Mahler \cite[end of section 3]{Mahler_Transference}, $\lambda_d\asymp1/\lambda_1^*$.  So, along with the lemma above, this implies $\mu(\Lambda)\asymp1/\lambda_1^*\leq e^{r(t)}$.

Now consider the affine grid obtained by translating a lattice $\Lambda$ by some vector $\yy$. In the quotient space $\R^{m+n}/\Lambda$, every element of the grid $\Lambda + \yy$ is mapped to the projection of the vector $\yy$. Distances in the projection are given by 
\[
d_{\R^{m+n}/\Lambda}(\xx,\yy) = \inf_{\rr \in \Lambda}\|\xx - \yy + \rr\|.
\]
This means that the length of the shortest nonzero vector in the affine grid is equal to the distance from $\0$ to $\yy$ in the quotient space $\R^{m+n}/\Lambda$, which is bounded from above by the codiameter of $\Lambda$. 

Combining this observation with the argument above implies that for any $\boldsymbol\gamma\in\R^m$ the affine grid 
\[
g_t\Lambda_\matrixA + \begin{pmatrix} \boldsymbol\gamma\\\0 \end{pmatrix}
\] 
has shortest nonzero vector shorter than $C_2e^{r(t)}$, for some uniform constant $C_2$ (depending only on the dimension of the lattice). 

Fix $Q$ as in the statement of the theorem, and let $t$ satisfy $\frac{t}{n}+r(t) = \log(Q/C_2)$.
This is always possible as the left-hand side is nondecreasing \cite{KleinbockMargulis}[equation 8.2b].
Consider the grid $g_t \Lambda_\matrixA + \begin{pmatrix} e^{t/m} \boldsymbol\gamma \\ 0 \end{pmatrix}$. We know that there exists at least one nonzero element whose norm is less than $C_2e^{r(t)}$. All such elements of this grid can be written as $g_t u_\matrixA \begin{pmatrix} \pp \\ \qq \end{pmatrix} + \begin{pmatrix} e^{t/m} \boldsymbol\gamma \\ 0 \end{pmatrix}$, so we pick such a vector and compute its sup norm: 
\[
\begin{pmatrix} e^{t/m} I_m &0\\0& e^{-t/n} I_n\end{pmatrix} \begin{pmatrix} I_m&\matrixA\\ 0&I_n\end{pmatrix} \begin{pmatrix} \pp\\ \qq\end{pmatrix} + \begin{pmatrix} e^{t/m}\boldsymbol\gamma\\ 0 \end{pmatrix} = \begin{pmatrix} e^{t/m}\left(\matrixA\qq + \pp + \boldsymbol\gamma\right)\\ e^{-t/n} \qq \end{pmatrix},
\]
where both components of this vector are less than $C_2e^{r(t)}$ in absolute value. 

Looking at the second inequality first, namely $e^{-t/n}\|\qq\| < C_2e^{r(t)}$, we see that this means 
\[
e^{-t/n}\|\qq\| = \frac {C_2}{Q}e^{r(t)} \|\qq\| < C_2e^{r(t)} \Rightarrow \|\qq\| < Q. 
\]
By examining the first component, we similarly see that 
\[
C_2e^{r(t)} > \|e^{t/m}\left(\matrixA\qq + \pp + \boldsymbol\gamma\right)\| \Rightarrow C_2>\frac{1}{\varphi\left(\frac{Q}{C_2}\right)} \|\matrixA\qq + \pp + \boldsymbol\gamma\|
\]
which implies that $\|\matrixA\qq + \pp + \boldsymbol\gamma\| < C_2\varphi\left(\frac{Q}{C_2}\right)$.

To prove the complementary direction, we will actually prove the following: If $\matrixA^T$ is $\psi$-approximable, then the set \[ D:= \left\{\boldsymbol\gamma\in\R^m:\ \forall C>0,\ \exists Q_k \to \infty \;\forall k \; \forall \pp\in\Z^m,\qq\in\Z^n \, 0 < \|\qq\|< Q_k,\ \|\matrixA\qq + \pp + \boldsymbol\gamma\| \geq C\varphi\left(\frac{Q_k}{C}\right)  \right\} \] is hyperplane absolute winning for any $\beta<\frac13$. \comdavid{need to define hyperplane absolute winning} Since any hyperplane absolutely winning set is in fact uncountable and has full Hausdorff dimension, this is a priori stronger than simply proving that the set is nonempty. 

Since $\matrixA^T$ is $\psi$-approximable \comdavidl{may need to alter the definition of $\psi$-approximable for this}, for each $k$, there exist lattice points $(u_\matrixA^{-1})^T \begin{pmatrix}\pp_k\\\qq_k\end{pmatrix} \in \Lambda_\matrixA^*$ for which 
\[
\displaystyle\min_{t_k} \left\|e^{r(t_k)} g_{-t_k}(u_\matrixA^{-1})^T \begin{pmatrix}\pp_k\\\qq_k\end{pmatrix}\right\| \leq \frac1{k}.
\]
Let $t_k$ denote the unique time at which $\rr_k = \begin{pmatrix}\aa_k\\\bb_k\end{pmatrix} := g_{-t_k} (u_\matrixA^{-1})^T \begin{pmatrix}\pp_k\\\qq_k\end{pmatrix}$ satisfies $\|\aa_k\| = \|\bb_k\|$ (by Lemma \ref{SimulMin} these are the same $t_k$'s as in the above equation).
Without loss of generality, by extracting a subsequence if necessary, we can assume that 
\[
r(t_k)-r(t_{k+1})+\frac{t_{k+1}-t_k}{m} > -2\ln(\beta)
\]
for all $k$ \cite[equation 8.2a]{KleinbockMargulis}.

We now describe Alice's strategy as a response to Bob's choice of ball. She deletes a hyperplane based on the following criterion:
\begin{center}
If for some $k$, the radius of Bob's ball lies between $4C\beta^{-1} e^{r(t_k)-t_k/m}$ and $4C\beta^{-2}e^{r(t_k)-t_k/m}$,
\end{center}
then Alice will delete a specific hyperplane-neighborhood (to be determined shortly), and otherwise she can choose arbitrarily. So, it remains only to explain how Alice will make her choice when that criterion is satisfied and why such an outcome will belong to $D$.

Since $\rr_k\in(g_{t_k} \Lambda_\matrixA)^* = g_{-t_k}(u_\matrixA^{-1})^T \Z^{m+n}$, for all $\begin{pmatrix}\cc\\\dd\end{pmatrix}\in g_{t_k} \Lambda_\matrixA$ we have $\aa_k\cdot\cc + \bb_k\cdot\dd \in \Z$. Now for each $i\in\Z$, let $H_{k,i}' = \{\begin{pmatrix}\cc\\\dd\end{pmatrix}\in\R^{m+n} : \aa_k\cdot\cc + \bb_k\cdot\dd = i\}$ and $H_{k,i} = \{\cc\in \R^m : \begin{pmatrix}\cc\\\0\end{pmatrix} \in H_{k,i}'\}$. Note that the distance between $H_{k,i}$ and $H_{k,i+1}$ is $1/\|\aa_k\| > ke^{r(t_k)}$. So when $k$ is sufficiently large, Bob's ball only intersects at most \textit{one} of the hyperplane-neighborhoods $N(e^{-t_k/m}H_{k,i},2C e^{r(t_k)-t_k/m})$. Indeed, the distance between two such intervals is at least $(k - 4C)e^{r(t_k)-t_k/m} \asymp ke^{r(t_k)-t_k/m}$, whereas Bob's ball is only of width at most $\frac{8Ce^{r(t_k)-t_k/m}}{\beta^2} \asymp e^{r(t_k)-t_k/m}$. Therefore Bob's choice can intersect only at most one such hyperplane-neighborhood, and Alice can delete it when appropriate. 

So, it remains to show that the outcome of the game $\yy$ lies in $D$. \comkeith{If $y \notin D$ then a certain inequality has to hold for all $Q$ sufficiently large. Find $k$ such that $t_k > \log(Q_0)$, then proceed as follows. } Fix $C \geq 1$ and let $\frac{Q}{C} = e^{r(t_k)+\frac{t_k}{n}}$. If there exist $\pp,\qq$ with $\|\qq\|< Q$ for which $\|\matrixA \qq + \pp - \yy\| < C\varphi\left(\frac{Q}{C}\right)$, then we will show that $\yy$ was deleted by Alice during one of her moves. Consider the point $\yy_k := g_{t_k}\begin{pmatrix}\yy\\\0\end{pmatrix} = \begin{pmatrix}e^{t_k/m}\yy\\\0\end{pmatrix}$, and its distance to the lattice $\Lambda_k := g_{t_k} \Lambda_\matrixA$. 
\[
\dist(\yy_k, \Lambda_k) \leq \max\{e^{t_k/m}\|\matrixA\qq + \pp - \yy\|, e^{-t_k/n}\|\qq\|\} < \max\{Ce^{r(t_k)}, Ce^{r(t_k)}\} = Ce^{r(t_k)}.
\]
Therefore
\[
\dist(\yy_k,H_{k,i}') < Ce^{r(t_k)} \text{ for some $i\in\Z$.}
\]
Say for example $\dist(\yy_k,\begin{pmatrix}\cc\\\dd\end{pmatrix}) < Ce^{r(t_k)}$ for some $\begin{pmatrix}\cc\\\dd\end{pmatrix}\in H_{k,i}'$. Then $\aa_k\cdot\cc + \bb_k\cdot\dd = i$, so $\aa_k\cdot (\cc + \frac{\bb_k\cdot\dd}{\|\aa_k\|^2}\aa_k) = i$, i.e., $\cc + \frac{\bb_k\cdot\dd}{\|\aa_k\|^2}\aa_k \in H_{k,i}$. Since $\|\aa_k\| = \|\bb_k\|$, we have $\|\frac{\bb_k\cdot\dd}{\|\aa_k\|^2}\aa_k\| \leq \|\dd\| < Ce^{r(t_k)}$. So $\dist(\cc,H_{k,i}) < Ce^{r(t_k)}$ and thus
\[
\dist(e^{t_k/m}\yy,H_{k,i}) < 2Ce^{r(t_k)}.
\]
Therefore, by the above equation, the distance between $\yy$ and $e^{-t_k/m}H_{k,i}$ is less than $2Ce^{r(t_k)-t_k/m}$, in which case Alice would have deleted the hyperplane-neighborhood $N(e^{-t_k/m}H_{k,i},2Ce^{r(t_k)-t_k/m})$ containing $\yy$, a contradiction. 

To finish the proof, we recall that the hyperplane absolute game has the countable intersection property discussed above \comkeith{Discuss it above}, which shows that we can do this for every integer $C\geq 1$, and the intersection of all of these sets, which is precisely $D$, is still absolutely winning. 
\end{proof}
\begin{proof}
Alternative reverse:
Assume for every $Q$ sufficiently large, and for all $\boldsymbol\gamma\in\mathbb{R}^m$ there is a $\qq\in\Z^n$ and an $\pp\in\Z^m$ such that
\[
\|\qq\|<Q
\]
and
\[
\|A\qq - \pp -\boldsymbol\gamma\| < C\varphi\left(\frac{Q}{C}\right).
\]
Define
\[
\frac{t}{n}+r(t)=\ln\left(\frac{Q}{C}\right)
\]
or
\[
Q=Ce^{\frac{t}{n}+r(t)}.
\]
This implies that
\begin{align*}
\max\left(e^\frac{t}{m}\|A\qq - \pp -\boldsymbol\gamma\|,e^{-\frac{t}{n}}\|\qq\|\right)
&< \max\left(e^\frac{t}{m}C\varphi\left(\left(\frac{Q}{C}\right)^n\right)^{1/m},e^{-\frac{t}{n}}Q\right)\\
&=Ce^{r(t)}.
\end{align*}
So that the RHS of this equation is an upper bound on the lattice's codiameter $g_t \Lambda_A$, and thus by Lemma \ref{lemmacodiamest} we have
\[
\lambda_{m+n}< 2Ce^{r(t)}.
\]
By a theorem of Mahler \cite{Mahler_Transference}, $\frac{1}{\lambda_1^*}\asymp\lambda_{m+n}$
Therefore there is a $c_1>0$ (depending only on the dimension of the lattice) such that
\[
\lambda_1^*\geq \frac{c_1}{\lambda_{m+n}}> \frac{c_1}{2C}e^{-r(t)}.
\]
So that for all $t>t_0$,
\[
\Delta(g_{-t}\Lambda_A^*)< r(t)+\ln\left(\frac{2C}{c_1}\right).
\]
By the Dani correspondence theorem, this implies that $A^T$ is not $\psi-$approximable.
\end{proof}

\begin{corollary}
Let $A$ be an $m\times n$ badly-approximable matrix.  Then $A$ is badly approximable if and only if
\[
\|\matrixA\qq + \pp + \boldsymbol\gamma\| < \frac{C}{Q^\frac{n}{m}}.
\]
\end{corollary}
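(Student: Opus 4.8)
The plan is to interpret this corollary as the inhomogeneous counterpart of the passage from Dirichlet's Theorem~\ref{dirthm} to its Corollary~\ref{dircor}, with Theorem~\ref{BA_equiv} now playing the role of Dirichlet's theorem. Concretely, the statement to establish is: $A$ is badly approximable if and only if there is a constant $C>0$ such that for every $\boldsymbol\gamma\in\R^m$ and every sufficiently large $Q$ one can find $\pp\in\Z^m$ and $\qq\in\Z^n\butnot\{\0\}$ with $\|\qq\|\le Q$ and $\|A\qq+\pp+\boldsymbol\gamma\|<C\,Q^{-n/m}$, and in particular (fixing $\boldsymbol\gamma$ and letting $Q\to\infty$) infinitely many pairs $(\pp,\qq)$ with $\|A\qq+\pp+\boldsymbol\gamma\|<C\|\qq\|^{-n/m}$. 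It is worth recording at the outset why the biconditional has to be phrased with the ``for all large $Q$'' quantifier and not merely as ``infinitely many solutions'': the latter is strictly weaker than bad approximability, since for a fixed $\boldsymbol\gamma$ a sparse-but-infinite set of favorable scales need not cover every $Q$.

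For the reverse implication there is essentially nothing to do: multiplying the displayed inequality by $Q^{n/m}$ turns the assumption into $Q^{n/m}\|A\qq+\pp+\boldsymbol\gamma\|\le C$ for all large $Q$ and all $\boldsymbol\gamma$, which is exactly the hypothesis of the reverse direction of Theorem~\ref{BA_equiv} and hence forces $A\in\BA$; only a harmless adjustment of the constant, and of strict versus non-strict inequality, is involved. The forward implication is the substantive one. Assuming $A\in\BA$, fix $\boldsymbol\gamma$ and apply Theorem~\ref{BA_equiv} with $Q=2^{k}$ for each $k$ to get $(\pp_k,\qq_k)$ with $0<\|\qq_k\|\le 2^{k}$ and $2^{kn/m}\|A\qq_k+\pp_k+\boldsymbol\gamma\|\le C$; since $\|\qq_k\|\le 2^{k}$ this yields $\|A\qq_k+\pp_k+\boldsymbol\gamma\|\le C\,2^{-kn/m}\le C\|\qq_k\|^{-n/m}$. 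To see that one really obtains infinitely many distinct pairs, I would use that for each fixed $\qq$ only finitely many $\pp$ keep $\|A\qq+\pp+\boldsymbol\gamma\|$ bounded; so if the solution set were finite, a single pair $(\pp_0,\qq_0)$ would recur for infinitely many $k$, and letting $k\to\infty$ in $\|A\qq_0+\pp_0+\boldsymbol\gamma\|\le C2^{-kn/m}$ would force $A\qq_0+\pp_0+\boldsymbol\gamma=\0$.

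The one genuine obstacle is this last, degenerate, possibility, namely $\boldsymbol\gamma=-(A\qq_0+\pp_0)\in A\Z^n+\Z^m$. In that case the inhomogeneous problem for $\boldsymbol\gamma$ is equivalent, via $(\pp,\qq)\mapsto(\pp+\pp_0,\qq+\qq_0)$, to the homogeneous problem for $A$, so Corollary~\ref{dircor} supplies infinitely many $(\pp,\qq)$ with $\|A\qq+\pp\|<\|\qq\|^{-n/m}$; translating back and using $\|\qq+\qq_0\|\asymp\|\qq\|$ for $\|\qq\|$ large produces infinitely many solutions of $\|A\qq'+\pp'+\boldsymbol\gamma\|<C'\|\qq'\|^{-n/m}$ with $C'$ absolute. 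Taking the constant in the corollary to be the maximum of the $C$ from Theorem~\ref{BA_equiv} and this $C'$ keeps it uniform over all $\boldsymbol\gamma$, which finishes the forward direction. The remaining work is bookkeeping: tracking how the constant changes when the inequalities are rearranged and when strict inequalities are loosened, and checking that the ``for all large $Q$'' form really does yield the ``infinitely many'' form for each individual $\boldsymbol\gamma$.
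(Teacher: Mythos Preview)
Your argument is correct, but it takes a different route from the paper's. In the paper this corollary sits immediately after the general transference Theorem~\ref{DaniTransference} and is meant to be recovered as the special case $\psi(x)=x^{-m/n}$: one first invokes Khintchine's transference principle to pass from $A\in\BA$ to $A^T\in\BA$, then plugs $\psi(x)=x^{-m/n}$ into Theorem~\ref{DaniTransference}; a short computation gives $r(t)\equiv 0$ and hence $\varphi(Q)=Q^{-n/m}$, which is exactly the displayed inequality. Your proof, by contrast, bypasses Theorem~\ref{DaniTransference} entirely and simply observes that the ``for all large $Q$'' form of the corollary \emph{is} Theorem~\ref{BA_equiv} verbatim (up to strict versus non-strict inequality), so both directions are immediate. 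That is perfectly valid as a standalone argument, but in the paper's architecture it is circular: the point of the corollary is to exhibit Theorem~\ref{BA_equiv} as a specialization of the general $\psi$-result, not to re-derive it from itself.

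Your additional derivation of the ``infinitely many solutions'' form via the dyadic scales $Q=2^k$, together with the handling of the degenerate case $\boldsymbol\gamma\in A\Z^n+\Z^m$ by translating back to Corollary~\ref{dircor}, is correct and self-contained, but it is extra content not asserted in the corollary as stated; the paper is only claiming the uniform ``for all large $Q$'' version.
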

\begin{proof}
By Khintchine's transference theorem \cite{Cassels}[Chapter 5] if $A$ is badly approximable, so is $A^T$.  Apply \ref{DaniTransference} with $\psi(x)=\frac{1}{x^\frac{m}{n}}$.
A little algebra yields $r(t)=0$, and thus $\varphi(Q)=\frac{1}{Q^\frac{n}{m}}$.
\end{proof}
\begin{corollary}
Let $A$ be an $m\times n$ matrix.  Then $A$ is not-very-well-approximable if and only if for every $\varepsilon>0$
\[
\|\matrixA\qq + \pp + \boldsymbol\gamma\| < \frac{C}{Q^{\frac{n}{m}-\varepsilon}},
\]
\end{corollary}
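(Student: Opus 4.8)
The plan is to argue entirely through homogeneous dynamics, using the unimodular lattice $\Lambda_A$ and the diagonal flow $g_t$ introduced just before Lemma~\ref{SimulMin}, the Dani correspondence in the quantitative form of \cite[Theorem 8.3]{KleinbockMargulis}, the duality $\lambda_{m+n}(g_t\Lambda_A)\asymp\lambda_1((g_t\Lambda_A)^*)^{-1}$ between the Minkowski minima of a lattice and of its polar \cite{Mahler_Transference}, and Lemma~\ref{SimulMin}. The two implications are of genuinely different flavour, so I would prove them separately: the forward direction is a short transference calculation, whereas the reverse direction requires \emph{constructing} an inhomogeneous target $\boldsymbol\gamma$ for which the stated approximation fails, which I would do by exhibiting a winning strategy in the hyperplane absolute game.

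For the forward direction, assume $A$ is not very well approximable; by Khintchine's transference principle neither is $A^T$. Given $\varepsilon>0$, choose $\sigma>m/n$ with $\tfrac nm-\tfrac1\sigma=\varepsilon$. Then $A^T$ is not $\sigma$-approximable, so \cite[Theorem 8.3]{KleinbockMargulis} yields $\lambda_1((g_t\Lambda_A)^*)>e^{-\theta_\sigma t}$ for all large $t$, where $\theta_\sigma=\tfrac{n\sigma-m}{nm(\sigma+1)}$. By Mahler duality this bounds $\lambda_{m+n}(g_t\Lambda_A)$, hence the covering radius of $g_t\Lambda_A$, above by $c_2 e^{\theta_\sigma t}$ for a uniform constant $c_2$. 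Applying this to the point $g_t(\boldsymbol\gamma,\0)$ produces $(\pp,\qq)\in\Z^{m+n}$ with $e^{t/m}\|A\qq-\pp-\boldsymbol\gamma\|\le c_2 e^{\theta_\sigma t}$ and $e^{-t/n}\|\qq\|\le c_2 e^{\theta_\sigma t}$; setting $t=(\tfrac1n+\theta_\sigma)^{-1}\log(Q/c_2)$ converts the second estimate into $\|\qq\|<Q$ and the first into $\|A\qq-\pp-\boldsymbol\gamma\|<c\,Q^{-(n/m-\varepsilon)}$. Since $\sigma$ was chosen as a function of $\varepsilon$, this settles the forward implication.

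For the reverse direction I would argue by contraposition. Suppose $A$ is very well approximable, so $A^T$ is $\sigma$-VWA for some $\sigma>m/n$; set $\mu=\tfrac12(\tfrac mn+\sigma)$, so that $m/n<\mu<\sigma$, $0<\theta_\mu<\theta_\sigma$, and $\varepsilon(\mu):=\tfrac nm-\tfrac1\mu>0$. It is enough to prove that for every $c>0$ the ``bad'' set $D_{c,\mu}=\{\boldsymbol\gamma:\ \forall Q_0\ \exists Q>Q_0\ \forall\pp,\qq,\ \|A\qq-\pp-\boldsymbol\gamma\|>c\,Q^{-(n/m-\varepsilon(\mu))}\}$ is nonempty, as any such $\boldsymbol\gamma$ contradicts the stated condition for the single value $\varepsilon=\varepsilon(\mu)$; I would in fact show that $D_{c,\mu}$ is hyperplane absolute winning. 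Since $A^T$ is $\sigma$-VWA, Lemma~\ref{SimulMin} provides an unbounded sequence of times $t_k$ at which $\|(g_t\Lambda_A)^*\|$ and $e^{\theta_\sigma t}\|(g_t\Lambda_A)^*\|$ are simultaneously locally minimized; at each $t_k$ pick a shortest nonzero $\rr_k=(\aa_k,\bb_k)\in(g_{t_k}\Lambda_A)^*$, so $\|\aa_k\|=\|\bb_k\|<e^{-\theta_\sigma t_k}$. This vector cuts $\R^m$ into the integer family of parallel affine hyperplanes $H_{k,i}=\{\cc\in\R^m:\aa_k\cdot\cc+\bb_k\cdot\dd=i\text{ for }(\cc,\dd)\in g_{t_k}\Lambda_A\}$, consecutive ones at distance $1/\|\aa_k\|>e^{\theta_\sigma t_k}$. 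After passing to a subsequence so that the radius-windows below are always reachable by Bob (which forces a lower bound on $t_{k+1}-t_k$ involving $\log\beta$), Alice plays: whenever Bob's ball has radius in a prescribed window of size $\asymp e^{-(1/m-\theta_\mu)t_k}$, she deletes the $2c\,e^{-(1/m-\theta_\mu)t_k}$-neighbourhood of the rescaled hyperplane $e^{-t_k/m}H_{k,i}$ closest to Bob's centre, with an innocuous fallback move otherwise. Because $\theta_\mu<\theta_\sigma$, after rescaling by $e^{-t_k/m}$ these deleted neighbourhoods shrink faster than the spacing between the $H_{k,i}$ grows, so at most one is ever relevant and Alice's deletion is legal.

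Finally I would check that Alice wins. If $\boldsymbol\gamma\notin D_{c,\mu}$, then at some large $t_k$ there are $(\pp_k,\qq_k)$ with $\|\qq_k\|<Q_k:=c\,e^{(\theta_\mu+1/n)t_k}$ and $\|A\qq_k-\pp_k-\boldsymbol\gamma\|<c\,Q_k^{-(n/m-\varepsilon(\mu))}$; pushing $g_{t_k}(\boldsymbol\gamma,\0)$ forward, both of its coordinates have norm $<c\,e^{\theta_\mu t_k}$, so $g_{t_k}(\boldsymbol\gamma,\0)$ lies within $c\,e^{\theta_\mu t_k}$ of some affine sublattice $H_{k,i}'$. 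The defining relation $\aa_k\cdot\cc+\bb_k\cdot\dd=i$ together with $\|\aa_k\|=\|\bb_k\|$ lets one project away the $\dd$-coordinate and conclude $\dist(e^{t_k/m}\boldsymbol\gamma,H_{k,i})<2c\,e^{\theta_\mu t_k}$, i.e.\ that $\boldsymbol\gamma$ lies within $2c\,e^{-(1/m-\theta_\mu)t_k}$ of $e^{-t_k/m}H_{k,i}$ --- exactly the neighbourhood Alice deleted, a contradiction. Hence each $D_{c,\mu}$ is hyperplane absolute winning, and since such sets are closed under countable intersection \cite[Prop. 2.3]{BFKRW}, the set $\bigcap_{c\in\Q}\bigcap_{\mu_j\searrow m/n}D_{c,\mu_j}$ is again hyperplane absolute winning, hence nonempty (indeed of full Hausdorff dimension); any $\boldsymbol\gamma$ in it witnesses the failure of the stated condition for some $\varepsilon$, which completes the contrapositive. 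The main obstacle is this reverse direction: one must balance three competing scales --- the spacing $\asymp e^{\theta_\sigma t_k}$ of the hyperplanes $H_{k,i}$, the size $\asymp e^{-(1/m-\theta_\mu)t_k}$ of the neighbourhoods Alice deletes, and the radii of the balls Bob is allowed to play --- so that Alice's strategy is simultaneously legal and winning, and it is precisely this balancing that dictates the choices $m/n<\mu<\sigma$ and the passage to a subsequence of the $t_k$.
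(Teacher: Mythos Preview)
Your argument is correct and is essentially the direct proof the paper gives for Theorem~\ref{VWA_equiv} (the same statement, appearing in the main text): Dani correspondence plus Mahler duality on $(g_t\Lambda_A)^*$ for the forward direction, and a hyperplane absolute winning strategy built from the short dual vectors $\rr_k$ at the times $t_k$ supplied by Lemma~\ref{SimulMin} for the reverse.

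The paper's proof of \emph{this} Corollary, however, is organized differently. It first proves a general transference theorem (labelled \textsf{DaniTransference} in the surrounding material) valid for an arbitrary approximating function $\psi$: $A^T$ is not $\psi$-approximable if and only if the inhomogeneous inequality $\|A\qq+\pp+\boldsymbol\gamma\|<C\varphi(Q/C)$ is solvable uniformly in $\boldsymbol\gamma$, where $\varphi$ is the Kleinbock--Margulis dual of $\psi$ via the function $r(t)$. The Corollary is then a two-line specialization: invoke Khintchine transference to pass from $A$ to $A^T$, set $\psi(x)=x^{-(m/n+\varepsilon')}$, compute $r(t)=\varepsilon' t/\big(m(1+m/n+\varepsilon')\big)$ and hence $\varphi(Q)=Q^{-(n/m-\varepsilon)}$ with $\varepsilon=n^2\varepsilon'/(m(m+n\varepsilon'))$. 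So the underlying dynamics and the HAW game are the same, but the paper packages them once at the level of general $\psi$ and then reads off both the badly approximable and the not-very-well-approximable cases as corollaries; your write-up instead carries out the VWA case from scratch. Your route is self-contained and avoids the $\psi/r/\varphi$ bookkeeping; the paper's route is shorter here and simultaneously yields the BA corollary from the same general theorem.
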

\begin{proof}
By Khintchine's transference theorem if $A$ is not-very-well-approximable, so is $A^T$.  Fix $\varepsilon'>0$ and apply \ref{DaniTransference} with $\psi(x)=\frac{1}{x^{\frac{m}{n}+\varepsilon'}}$.
A little algebra yields $r(t)=\frac{\varepsilon' t}{m\left(1+\frac{m}{n}+\varepsilon'\right)}$, and thus $\varphi(Q)=\frac{1}{Q^{\frac{n}{m}-\varepsilon}}$, where $\varepsilon=\frac{n^2\varepsilon'}{m(m+n\varepsilon')},$ as required.
\end{proof}
}









\bibliography{bibliography}
\bibliographystyle{plain}

\end{document}